\theoremstyle{plain} 
\newtheorem{thm}{Theorem}[section]
\newtheorem*{thm*}{Theorem}
\newtheorem*{thm*conj*}{Theorem/Conjecture}
\newtheorem{lem}[thm]{Lemma}
\newtheorem*{lem*}{Lemma}
\newtheorem{cor}[thm]{Corollary}
\theoremstyle{definition}
\newtheorem{defn}[thm]{Definition}
\newtheorem*{defn*}{Definition}
\newtheorem{rem}[thm]{Remark}
\newtheorem{ex}[thm]{Example}
\newtheorem{quest}[thm]{Question}
\newtheorem{notation}[thm]{Notation}
\newtheorem*{notation*}{Notation}
\numberwithin{equation}{section}
\renewcommand{\theta}{\vartheta}
\renewcommand{\phi}{\varphi}
\renewcommand{\epsilon}{\varepsilon}
\renewcommand{\subset}{\subseteq}
\newcommand{\N}{\mathbb N}
\newcommand{\C}{\mathbb C}
\newcommand{\norm}[1]{\lVert {#1}\rVert}
\begin{document}
\title{Partition Quantum Spaces}
\author{Stefan Jung and Moritz Weber }
\address{Saarland University, Fachbereich Mathematik, Postfach 151150,
66041 Saarbr\"ucken, Germany}
\email{jung@math.uni-sb.de, weber@math.uni-sb.de}
\date{\today}
\subjclass[2010]{46L65 (Primary); 05A18 (Secondary)}
\keywords{$C^*$-algebras, set partitions, relations, universal $C^*$-algebras, quantum spaces, quantum isometry groups, compact matrix quantum groups, quantum groups, easy quantum groups}
\thanks{Both authors were funded by the ERC Advanced Grant NCDFP, held by Roland Speicher. The second author was also funded by the SFB-TRR 195 and the DFG project \emph{Quantenautomorphismen von Graphen}.}

\begin{abstract}
 We propose a definition of partition quantum spaces. They are given by universal \(C^*\)-algebras whose relations come from partitions of sets.
We ask for the maximal compact matrix quantum group acting on them.
We show how those fit into the setting of easy quantum groups:
Our approach yields spaces these groups are acting on.
In a way, our partition quantum spaces arise as the first \(d\) columns of easy quantum groups.
However, we define them as universal \(C^*\)-algebras rather than as \(C^*\)-subalgebras of easy quantum groups.
We also investigate the minimal number \(d\) needed to recover an easy quantum group as the quantum symmetry group of a partition quantum space.
In the free unitary case, \(d\) takes the values one or two.
\end{abstract}

\maketitle

\section{Introduction}

In mathematics, we often have a space \(X\) and want to investigate its symmetries. This leads to the notion of groups. In modern mathematics however, the notion of quantum spaces appeared, for example modelled as possibly non-commutative \(C^*\)-algebras. Asking for the quantum symmetries of such topological quantum spaces leads to the definition of quantum groups.
Our work is based on the definition of (\(C^*\)-algebraic) compact matrix quantum groups by S.L. Woronowicz in \cite{woronowiczpseudogroups}. Roughly speaking, such a quantum group consists of a unital \(C^*\)-algebra \(\mathcal{A}\) generated by the entries of a matrix \(u_G=(u_{ij})\) such that there exists a comultiplication \(\Delta:\mathcal{A}\rightarrow\mathcal{A}\otimes \mathcal{A}\) (see Definition \ref{defn:CMQG}). Now there are two fundamental questions regarding quantum symmetries:
\begin{compactitem}
\item Given a quantum space \(X\) -- what is its quantum symmetry group?
\item Conversely, given a quantum group \(G\) -- can we find a quantum space, such that its quantum symmetry group is precisely \(G\)?
\end{compactitem}
In the present article, we mainly deal with the second kind of questions.

In \cite{banicaspeicherliberation}, T. Banica and R. Speicher introduced an important class of compact matrix quantum groups, so called easy quantum groups. Their structure is encoded by partitions of sets via Woronowicz's Tannaka-Krein duality from \cite{woronowicztannakakrein}.
More precisely, one can associate linear maps with given partitions and ask them to span the intertwiner spaces of some compact matrix quantum group.
These special quantum groups were generalized in \cite{tarragoweberclassificationpartitions,tarragoweberclassificationunitaryQGs} by P. Tarrago and the second author:
If \(N\!\in\!\N\) and \(\Pi\) is a (suitable) set of two-coloured partitions, we can associate with every partition \(p\!\in\!\Pi\) relations \(R^{Gr}_p(u_G)\) for some generators \(\big(u_{ij}\big)_{1\!\le i,j\le N}\) and define the \emph{easy quantum group} \(G_N(\Pi)\) via the universal \(C^*\)-algebra
\[C(G_N(\Pi)):=C^*\big(u_{ij}\;|\;\forall p\!\in\!\Pi:\textnormal{ the relations } R^{Gr}_p(u_G)\textnormal{ hold}\big).\]
Using this machinery, many compact matrix quantum groups may be produced.
 
The question is now on which quantum spaces these quantum groups act, or even stronger: Find a quantum space, such that a given eas quantum group \(G_N(\Pi)\) describes its quantum symmetries, i.e. \(G_N(\Pi)\) is its \emph{quantum symmetry group} (in the sense of Definition \ref{defn:QSymG}).

In some cases answers were already found: The quantum permutation group \(S_N^ +\), for example, is the quantum symmetry group of \(N\) quantum points (see \cite{wang1998}) and the orthogonal quantum group \(O_N^+\) is the quantum symmetry group of the free real sphere (see \cite{banicagoswami_noncommutative_spheres,wang1995}).
Moreover, every compact matrix quantum group acts on the first column of its fundamental representation \(u_G\), i.e. on the \(C^*\)-algebra \(C^*(u_{11},u_{21},\ldots,u_{N1})\).
But in contrast to this, given only the first row, the quantum symmetry group of this space is not always described by the quantum group we started with (see Example \ref{ex:S'_N_2}). In this sense we cannot recover in general a given easy quantum group from its first column. 

For easy quantum groups \(G_N(\Pi)\) we develop this idea of a ``first column space'' further into two directions.
Firstly, instead of using directly the entries \(u_{i1}\) in the first column of \(u_{G_N(\Pi)}\) we define our quantum spaces as universal \(C^*\)-algebras generated by the entries of a vector \(x\!=\!(x_1,\ldots,x_N)^T\).
The relations \(R^{Sp}_p(x)\) for these generators are motivated by the first column of \(u_{G_N(\Pi)}\) (see Definitions \ref{defn:quantum_space_relations_for_the_x_i} and \ref{defn:PQS_vector_case}).
Given \(N\!\in\!\N\), a set of partitions \(\Pi\) and a vector \(x\) as above, we then define
\[C\big(X_{N}(\Pi)\big):=C^*\big(x_1,\ldots,x_N\;|\;\forall p\!\in\!\Pi:\textnormal{ the relations } R^{Sp}_p(x)\textnormal{ hold})\]
and call \(X_{N}(\Pi)\) a \emph{partition quantum space (PQS) of one vector}.

Secondly, we do not consider only one column but rather \(d\) columns of \(u_G\) at once. In this sense we generalize the definition above for a tupel of vectors
\[x:=\left(\begin{pmatrix}x_{11}\\ \vdots\\ x_{N1}\end{pmatrix},\ldots,\begin{pmatrix}x_{1d}\\ \vdots\\ x_{Nd}\end{pmatrix}\right),\]
producing a \emph{partition quantum space (PQS) of \(d\) vectors}, \(X_{N,d}(\Pi)\).

Given a set of partitions \(\Pi\) and \(d\!\le\!N\!\in\!\N\), we have (under some mild conditions) an associated easy quantum group \(G_N(\Pi)\) and a quantum space \(X_{N,d}(\Pi)\). In this work we concentrate on the question, how these two objects fit together in the sense of quantum group actions.
\begin{quest}\label{quest:does_BSQG_act_on_PQS-introduction}
Does the easy quantum group \(G_N(\Pi)\) act on \(X_{N,d}(\Pi)\)?
\end{quest}
\begin{quest}\label{quest:is_BSQG_QSymG_of_PQS_introduction}
Is \(G_N(\Pi)\) the quantum symmetry group of \(X_{N,d}(\Pi)\)?
\end{quest}
\begin{quest}\label{quest:minimal_d}
What is the smallest \(d\) if we want to recover the easy quantum group \(G_N(\Pi)\) as the quantum symmetry group of \(X_{N,d}(\Pi)\)?
\end{quest}

At last we want to mention some other works touching our topic.
P. Podle\'{s}'s definition of \emph{quantum spheres} in \cite{podlesquantumspheres} was a first but important step in quantizing the notion of a classical space.
Authors like T. Banica, J. Bhowmick, D. Goswami, P. Podle\'{s}, A. Skalski and Sh. Wang investigated various quantum spaces and actions of quantum groups on them and asked (for example under the name of \emph{quantum isometry groups}) for the universal objects acting on these spaces (see \cite{podlesquantumspheres,podles1995,wang1998,goswamiquantumgroupofisometries1,bhowmickgoswamiqisog_examples_and_computations,
bhowmickgoswamiskalski,banicagoswami_noncommutative_spheres,bhowmickgoswamiquantumgroupriemannian}).
The idea of a quantum space inspired by one or several rows/columns of a compact matrix quantum group \(G\) can be found for example in \cite{banicaskalskisoltan_homogeneous_spaces}, but note that the spaces there are defined via \(C^*\)-subalgebras of \(C(G)\), whereas we introduce them as universal \(C^*\)-algebras.
At last we mention the recent work \cite{banicageometricaspects} by T. Banica, where partition induced relations similar to those in our article are used to describe certain quantum subspaces of the free complex sphere.
In contrast to the setting presented there, where  it is part of the assumptions that an easy quantum group is the quantum symmetry group of a suitable quantum space, this is the central question in our work.
Additionally, as mentioned above, we generalize the idea of quantum vectors to tupels of quantum vectors.

\section{Main results}
Let \(d,N\!\in\!\N\) with \(d\!\le\!N\) and let \(\Pi\) be a set of partitions defining both an easy quantum group \(G_N(\Pi)\) and a partition quantum space \(X_{N,d}(\Pi)\).
The relations  on the generators \(x_{ij}\) of our quantum spaces can be seen as derived from the first \(d\) columns of the matrix \(u_{G_N(\Pi)}=(u_{ij})\) (see the appendix for an overview on all relations associated to partitions).
\begin{thm*} [see Theorems \ref{thm:hom_from_PQS_into_BSQG_vector_case} and \ref{thm:hom_from_PQS_into_BSQG_d-vector_case}]
We have a \(^*\)-homomorphism 
\[\phi:\;C\big(X_{N,d}(\Pi)\big)\rightarrow C\big(G_N(\Pi)\big);\;x_{ij}\mapsto u_{ij}\;\quad,\quad1\!\le\!i\!\le\!N, 1\!\le\!j\!\le\!d\]
mapping the entries of \(x\) canonically onto the first \(d\) columns of \(u_{G_N(\Pi)}\).
\end{thm*}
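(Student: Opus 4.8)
The plan is to obtain \(\phi\) from the universal property of \(C\big(X_{N,d}(\Pi)\big)\). By Definitions \ref{defn:quantum_space_relations_for_the_x_i} and \ref{defn:PQS_vector_case} (and the \(d\)-vector analogue), \(C\big(X_{N,d}(\Pi)\big)\) is the universal \(C^*\)-algebra on generators \(x_{ij}\), \(1\!\le\!i\!\le\!N\), \(1\!\le\!j\!\le\!d\), subject precisely to the relations \(R^{Sp}_p(x)\), \(p\!\in\!\Pi\). Hence a \(^*\)-homomorphism out of it sending \(x_{ij}\) to a prescribed element \(b_{ij}\) of a \(C^*\)-algebra exists, and is then automatically unique, as soon as the family \((b_{ij})\) satisfies all these relations. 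Applying this with \(b_{ij}=u_{ij}\) (\(1\!\le\!j\!\le\!d\)) in \(C\big(G_N(\Pi)\big)\), the statement reduces to the single claim that \emph{the first \(d\) columns \((u_{ij})_{i\le N,\,j\le d}\) satisfy \(R^{Sp}_p\) for every \(p\!\in\!\Pi\)}. Uniqueness of \(\phi\), and the fact that its image is the sub-\(C^*\)-algebra generated by these columns, are then immediate.

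To prove that claim I would return to how \(R^{Sp}_p\) is built from \(R^{Gr}_p\). Recall that \(R^{Gr}_p(u)\) is the coefficient-wise form of the intertwiner condition that the linear map \(T_p\) associated with \(p\) lies in the morphism space between the appropriate tensor powers of \(u_{G_N(\Pi)}\) and its conjugate; written out, it is, for all admissible multi-indices, an identity of the shape ``(a sum of products of entries \(u^{(*)}_{i_\bullet j_\bullet}\)) \(=\) (a Kronecker symbol of \(p\))''. The space relation \(R^{Sp}_p\) is, by construction, obtained by first rotating \emph{all} points of \(p\) onto one row — producing a partition \(\tilde p\) on \(|p|\) points of a single row, with the colours of the rotated legs conjugated — and then keeping the resulting multilinear identity only for those free multi-indices lying in \(\{1,\dots,d\}\), with \(u\) replaced by \(x\); for \(d\ge 2\) one records in the same way the identities coming from the positioner-type partitions, which fix the ordering of the \(d\) quantum vectors. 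The reason every leg must be rotated to one side is exactly that in the resulting identity the summed indices range over the full set \(\{1,\dots,N\}\) (harmless, as the generators \(x\) carry all \(N\) rows) while the only free indices are the ones that, for the first \(d\) columns, are the second indices; restricting those to \(\{1,\dots,d\}\) then leaves an identity in which \emph{only} entries with second index \(\le d\) appear. Using \(\tilde p\) rather than \(p\) itself is what makes this work.

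It then remains to observe that \((u_{ij})_{j\le d}\) satisfies these identities. By definition \(C\big(G_N(\Pi)\big)\) satisfies \(R^{Gr}_p(u)\) for every \(p\!\in\!\Pi\); under the standing assumptions on \(\Pi\) this makes \(G_N(\Pi)\) a compact matrix quantum group, so its intertwiner spaces form a tensor category closed under rotation (Frobenius reciprocity). Therefore \(T_{\tilde p}\) is again an intertwiner of \(u_{G_N(\Pi)}\), and the rotated relation \(R^{Gr}_{\tilde p}(u)\) holds in \(C\big(G_N(\Pi)\big)\) — for \emph{every} choice of free multi-index in \(\{1,\dots,N\}^{|p|}\), and in particular for those in \(\{1,\dots,d\}^{|p|}\). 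That specialization is precisely \(R^{Sp}_p\big((u_{ij})_{j\le d}\big)\). Hence all defining relations of \(C\big(X_{N,d}(\Pi)\big)\) hold for the first \(d\) columns, and the universal property produces \(\phi\). Concretely I would run the argument first in the one-vector case \(d=1\) as the template, then repeat it verbatim for general \(d\), the only addition being the positioner relations.

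The main obstacle is the bookkeeping in the middle step: one must check, uniformly over \(p\) and in the two-coloured setting, that \(R^{Sp}_p\) really is the restriction to \(\{1,\dots,d\}\) of the fully rotated relation \(R^{Gr}_{\tilde p}\) — tracking the colour conjugations generated by the rotations and confirming that after bringing every leg to one row and restricting the free indices, no entry with second index \(>d\) can survive. For \(d\ge 2\) the additional task is to match the positioner-partition relations on the two sides. Everything else — the invocation of the universal property and the categorical closure of intertwiner spaces — is routine.
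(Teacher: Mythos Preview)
Your overall framework is correct and matches the paper: invoke the universal property of \(C\big(X_{N,d}(\Pi)\big)\), reduce to showing that the first \(d\) columns of \(u_{G_N(\Pi)}\) satisfy \(R^{Sp}_p\) for every \(p\in\Pi\), and use that the intertwiner spaces of \(G_N(\Pi)\) are closed under the category operations. But the middle step, where you describe what \(R^{Sp}_p\) actually is and how it follows from \(R^{Gr}\), is wrong.

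Your claim that ``\(R^{Sp}_p\) is, by construction, obtained by first rotating all points of \(p\) onto one row'' does not match Definitions~\ref{defn:quantum_space_relations_for_the_x_i} and~\ref{defn:quantum_space_relations_for_the_x_ij}. Those definitions keep the two-row structure of \(p\): relation~(i) equates a sum over \(T_i\) (upper labelings) with a sum over \(T'_i\) (lower labelings), with both \(\gamma\in T_j\cap[d]^k\) and \(\gamma'\in T'_j\cap[d]^l\) appearing as \emph{second} indices. A one-row rotation \(\tilde p\in\mathcal P(k+l,0)\) would instead produce a single sum equal to a Kronecker symbol; restricting its free indices to \([d]\) does not recover relation~(i). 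For a concrete discrepancy, take \(p=\idpartww\): your rotated relation is column orthonormality, whereas \(R^{Sp}_{\idpartww}(x)\) is tautological. There are also no separate ``positioner-type'' relations added for \(d\ge2\); the passage from \(d=1\) to general \(d\) is solely the enlargement of the range of \(\gamma,\gamma'\) in Definition~\ref{defn:quantum_space_relations_for_the_x_ij}.

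The paper's argument (Lemma~\ref{lem:first_column-relations} for \(d=1\), Lemma~\ref{lem:quantum_group_relations_imply_quantum_space_relations}(1) for general \(d\)) does not use rotation but the composition \(pp^*\). The key chain for relation~(i) is
\[
\sum_{t\in T_i}u_{t_1\gamma_1}^{\omega_1}\cdots u_{t_k\gamma_k}^{\omega_k}
\;\overset{R^{Gr}_p}{=}\;
\sum_{t'\in T'_j}u_{\beta'_1t'_1}^{\omega'_1}\cdots u_{\beta'_lt'_l}^{\omega'_l}
\;\overset{R^{Gr}_{pp^*}}{=}\;
\sum_{t'\in T'_i}u_{t'_1\gamma'_1}^{\omega'_1}\cdots u_{t'_l\gamma'_l}^{\omega'_l},
\]
with \(\beta'\in T'_i\) arbitrary; the first equality converts column indices to row indices, and it is precisely \(R^{Gr}_{pp^*}\) that converts them back, now on the lower row. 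Relations~(ii) and~(iii) come directly from \(R^{Gr}_p\) and \(R^{Gr}_{p^*}\). Closure of \(\langle\Pi\rangle\) under \(p\mapsto p^*,\,pp^*\) (Lemma~\ref{lem:relations_on_generators_imply_relations_for_category}) then finishes the proof. So your categorical-closure instinct is right, but the partition you need is \(pp^*\), not a one-row rotation of \(p\).
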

Note that we do not know whether \(\phi\) is an isomorphism or not.

Furthermore, we answer Question \ref{quest:does_BSQG_act_on_PQS-introduction} in full generality:
\begin{thm*}[see Theorem \ref{thm:existence_of_alpha_and_beta}]
For any \(1\!\le\!d\!\le\!N\), \(G_N(\Pi)\) acts on \(X_{N,d}(\Pi)\) from the left and right.
\end{thm*}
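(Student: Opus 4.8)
The plan is to exhibit the two coactions explicitly and then to verify, in turn, that they are well-defined unital $^*$-homomorphisms, that they are coassociative, and that they satisfy the counit and Podle\'{s} density conditions; the first of these is where essentially all the content lies. For the left action I would take
\[
 \alpha\colon C\big(X_{N,d}(\Pi)\big)\longrightarrow C\big(G_N(\Pi)\big)\otimes C\big(X_{N,d}(\Pi)\big),\qquad
 \alpha(x_{ij})=\sum_{k=1}^{N}u_{ik}\otimes x_{kj}.
\]
This is the obvious candidate: composing it with $\mathrm{id}\otimes\phi$, where $\phi$ is the $^*$-homomorphism of the preceding Theorem, gives exactly $\Delta\circ\phi$, so $\alpha$ is morally the map obtained by formally removing $\phi$ from the second tensor leg of $\Delta\circ\phi$; but since $\phi$ is not known to be injective, this does not construct $\alpha$ for us, and a direct argument is needed. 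As $C\big(X_{N,d}(\Pi)\big)$ is by definition a universal $C^*$-algebra, producing $\alpha$ as a $^*$-homomorphism reduces to a single point: showing that the elements $a_{ij}:=\sum_k u_{ik}\otimes x_{kj}$ of $C\big(G_N(\Pi)\big)\otimes C\big(X_{N,d}(\Pi)\big)$ satisfy every defining relation $R^{Sp}_p$, $p\in\Pi$. Once $\alpha$ is in hand, the right action $\beta\colon C\big(X_{N,d}(\Pi)\big)\to C\big(X_{N,d}(\Pi)\big)\otimes C\big(G_N(\Pi)\big)$ will be obtained from it.

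\medskip
The heart of the proof is this relation check, and it works precisely because the ``space'' relations and the ``graph'' relations are governed by one and the same combinatorial linear maps. On the one hand $R^{Gr}_p(u)$ says that the map $T_p^{(N)}$ intertwines the relevant coloured tensor powers of $u$; on the other hand $R^{Sp}_p$ is designed to be exactly the relation satisfied by the first $d$ columns of a matrix $u$ with this intertwining property. Substituting $a_{ij}$ into $R^{Sp}_p$, one writes the left-hand side as $T_p^{(N)}$ applied to a coloured tensor power of the $N\times d$ matrix $(u\otimes 1)(1\otimes X)$, where $X=(x_{ij})$; one then uses $R^{Gr}_p(u)$ to move $T_p^{(N)}$ past the $C\big(G_N(\Pi)\big)$-factor, and what remains on the $C\big(X_{N,d}(\Pi)\big)$-side is precisely the left-hand side of $R^{Sp}_p(x)$, which holds by definition of $C\big(X_{N,d}(\Pi)\big)$. \emph{This bookkeeping is the main obstacle.} It must be carried out uniformly over all the shapes of relation collected in the appendix --- the various $\circ/\bullet$ colourings, the rotated and reflected relations, and the normalisation factors produced by singletons --- and, in the passage from a single vector to a $d$-tuple of vectors, one has to keep careful track of the fact that $T_p$ acts on the ``row'' legs in dimension $N$ but on the ``column'' legs in dimension $d$. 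Granting this, well-definedness follows at once from the intertwiner identity, and the universal property of $C\big(X_{N,d}(\Pi)\big)$ then yields the unital $^*$-homomorphism $\alpha$.

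\medskip
The remaining properties of $\alpha$ are routine: coassociativity $(\Delta\otimes\mathrm{id})\circ\alpha=(\mathrm{id}\otimes\alpha)\circ\alpha$ is immediate from $\Delta(u_{ij})=\sum_k u_{ik}\otimes u_{kj}$; the counit identity $(\epsilon\otimes\mathrm{id})\circ\alpha=\mathrm{id}$ from $\epsilon(u_{ij})=\delta_{ij}$; and the Podle\'{s} density condition from the invertibility of $u$ over $M_N\big(C(G_N(\Pi))\big)$, since multiplying $\alpha(x_{ij})$ on the left by the entries of $u^{-1}$ recovers each $1\otimes x_{lj}$, while $u_{ij}\otimes 1=(u_{ij}\otimes 1)\alpha(1)$. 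Finally, for the right action: every easy quantum group is of Kac type, so the antipode $S$ of $G_N(\Pi)$ is a $^*$-preserving (in particular bounded) anti-automorphism, and setting $\beta:=(\mathrm{id}\otimes S)\circ\sigma\circ\alpha$, with $\sigma$ the tensor flip, gives a unital $^*$-homomorphism $\beta(x_{ij})=\sum_k x_{kj}\otimes u_{ki}^{*}$ all of whose coaction properties are inherited from those of $\alpha$ together with the antipode axioms; alternatively, $\beta$ can be checked directly by the computation mirroring the one above. Thus $\alpha$ and $\beta$ are a left and a right action of $G_N(\Pi)$ on $X_{N,d}(\Pi)$, for every $1\le d\le N$.
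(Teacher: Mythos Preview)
Your approach to $\alpha$ is essentially the paper's: use the universal property and verify that the elements $a_{ij}=\sum_k u_{ik}\otimes x_{kj}$ satisfy each $R^{Sp}_p$. The paper carries this out concretely via the decomposition $[N]^k=T_0\,\dot\cup\cdots\dot\cup\,T_r$ of Notation~\ref{not:partitioning_of_labelings}: one writes $\sum_{t\in T_i}a_{t_1\gamma_1}^{\omega_1}\cdots a_{t_k\gamma_k}^{\omega_k}=\sum_{s\in[N]^k} c_{(s)}\otimes x_{s_1\gamma_1}^{\omega_1}\cdots x_{s_k\gamma_k}^{\omega_k}$ with $c_{(s)}=\sum_{t\in T_i}u_{t_1s_1}^{\omega_1}\cdots u_{t_ks_k}^{\omega_k}$, and the crucial observation is that $c_{(s)}$ depends only on the class $T_m\ni s$ (and vanishes for $m=0$), so that it can be pulled out of the inner sum and matched with the corresponding expression on the $T'_i$-side. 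This follows not from $R^{Gr}_p(u)$ alone but from the stronger relations $R^{Sp}_p(u)$ of Definition~\ref{defn:first_d-many_columns-relations}, which hold by Remarks~\ref{rem:quantum_space-relations:sums_only_depend_on_T_j_case_u} and~\ref{rem:quantum_space_relations_always_fulfilled_for_BSQGs} (ultimately because $p$, $p^*$ and $pp^*$ all lie in $\langle\Pi\rangle$). Your intertwiner sketch does not isolate this point, and the phrase ``$T_p^{(N)}$ applied to a coloured tensor power of $(u\otimes1)(1\otimes X)$'' is not a precise description of the left-hand side of $R^{Sp}_p$: the row-index sum runs only over a single class $T_i$, not over all of $[N]^k$, so it is not literally a matrix coefficient of $T_p$. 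The coassociativity and density remarks are fine, but in the paper they are disposed of in a remark before the theorem rather than in the proof.

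Your construction of $\beta$ via the antipode contains a genuine error. Even in the Kac case, $S$ is a $^*$-preserving \emph{anti}-automorphism, so $\mathrm{id}\otimes S$ is \emph{not} multiplicative on $C(X)\otimes C(G)$; hence $(\mathrm{id}\otimes S)\circ\sigma\circ\alpha$ fails to be a $^*$-homomorphism. Moreover your formula $\beta(x_{ij})=\sum_k x_{kj}\otimes u_{ki}^{*}$ is not the paper's right action: Definition~\ref{defn:actions_CMQG_CMQS_d-vector_case} asks for $\beta(x_{ij})=\sum_k u_{ki}\otimes x_{kj}$, with values in $C(G)\otimes C(X)$ and without the star. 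The paper obtains $\beta$ simply by rerunning the $\alpha$-computation with $u$ replaced by $u^T$; this works because $R^{Sp}_p(u^T)$ also holds for easy quantum groups (Remark~\ref{rem:quantum_space_relations_always_fulfilled_for_BSQGs}), equivalently because $u_{ij}\mapsto u_{ji}$ extends to a $^*$-automorphism of $C(G_N(\Pi))$. Your fallback ``check $\beta$ directly'' would succeed, but only for the correct formula and with this transpose symmetry, not the antipode, as the underlying mechanism.
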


In the case \(d\!=\!N\) we can also answer Question \ref{quest:is_BSQG_QSymG_of_PQS_introduction}:
\begin{thm*}[see Corollary \ref{cor:X_is_BSQS_if_d=N}]
\(G_N(\Pi)\) is the quantum symmetry group of \(X_{N,N}(\Pi)\).
\end{thm*}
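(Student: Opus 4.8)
The plan is to unwind Definition~\ref{defn:QSymG}: that $G_N(\Pi)$ be the quantum symmetry group of $X:=X_{N,N}(\Pi)$ amounts to two assertions — (a) $G_N(\Pi)$ acts on $X$, and (b) for every compact matrix quantum group $H$ with an $N\times N$ fundamental representation $v=(v_{ij})$ acting on $X$, via a coaction $\beta_H$ with $\beta_H(x_{ij})=\sum_{k}v_{ik}\otimes x_{kj}$, there is a surjective unital $^*$-homomorphism $C(G_N(\Pi))\to C(H)$ with $u_{ij}\mapsto v_{ij}$ intertwining the actions. Assertion~(a) is Theorem~\ref{thm:existence_of_alpha_and_beta} specialised to $d=N$. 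In~(b), surjectivity and the intertwining property are automatic once the map exists (the $u_{ij}$ generate $C(G_N(\Pi))$, and compatibility with the coactions and comultiplications is immediate from their explicit formulas $u_{ij}\mapsto v_{ij}$), so everything reduces to showing that the $v_{ij}$ satisfy the relations $R^{Gr}_p$ for all $p\in\Pi$.

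The step that exploits $d=N$ is this: since the generators of $C(X_{N,N}(\Pi))$ now form a square matrix, composing the $^*$-homomorphism $\phi\colon C(X_{N,N}(\Pi))\to C(G_N(\Pi))$ of Theorem~\ref{thm:hom_from_PQS_into_BSQG_d-vector_case} with the counit $\varepsilon$ of $G_N(\Pi)$ (which exists because $I_N$ satisfies all relations $R^{Gr}_p$) yields a character
\[
\chi:=\varepsilon\circ\phi\colon\ C\big(X_{N,N}(\Pi)\big)\longrightarrow\C,\qquad \chi(x_{ij})=\delta_{ij}.
\]
Given $H$ acting as above, the slice $\Psi:=(\mathrm{id}_{C(H)}\otimes\chi)\circ\beta_H$ is a unital $^*$-homomorphism with $\Psi(x_{ij})=\sum_{k}v_{ik}\chi(x_{kj})=v_{ij}$; equivalently, applying $\beta_H$ to the defining relations $R^{Sp}_p(x)$ of $C(X_{N,N}(\Pi))$ and slicing the second leg by $\chi$ returns $R^{Sp}_p(v)$. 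Hence the matrix $v$ satisfies every \emph{space} relation $R^{Sp}_p(v)$, $p\in\Pi$.

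It remains to upgrade this to the \emph{group} relations $R^{Gr}_p(v)$, $p\in\Pi$, and this is the technical heart. Here one uses that $v$ is a fundamental representation of a compact matrix quantum group, so $v$ and $\bar v$ are invertible over $C(H)$ and the intertwiner spaces of $H$ form a tensor category closed under rotation and adjoint. For $d=N$ the relation $R^{Sp}_p(v)$ is precisely the ``all columns'' portion of the intertwiner identity $T_p v^{\otimes k}=v^{\otimes l}T_p$; rotating legs with $v^{-1},\bar v^{-1}$ and passing to adjoints inside $\operatorname{Rep}(H)$ then recovers the full two-sided identity $T_p\in\operatorname{Hom}(v^{\otimes k},v^{\otimes l})$, i.e.\ $R^{Gr}_p(v)$ (in the simplest cases, such as the pair-partition relations defining $O_N^+$, this is just the observation that an invertible isometry is a unitary). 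I expect the real work to lie exactly here: making the shape of $R^{Sp}_p$ at $d=N$ precise and checking — type by type, or uniformly via the partition calculus of \cite{tarragoweberclassificationpartitions,tarragoweberclassificationunitaryQGs} — that invertibility of $v$ and $\bar v$ closes the gap between the one-sided space relation and the two-sided group relation. Once $R^{Gr}_p(v)$ holds for all $p\in\Pi$, the universal property of $C(G_N(\Pi))$ supplies the surjection $C(G_N(\Pi))\to C(H)$, $u_{ij}\mapsto v_{ij}$, a morphism of compact matrix quantum groups intertwining the actions by construction; hence $G_N(\Pi)$ is the quantum symmetry group of $X_{N,N}(\Pi)$.
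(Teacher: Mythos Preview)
Your character-slicing argument is essentially the paper's approach (Notation~\ref{not:evalutationmap} and Lemma~\ref{lem:lemma2_relations_for_v_ij} with $\sigma=\mathrm{id}$), so you correctly obtain $R_p^{Sp}(v)$ for all $p\in\Pi$. The gap lies in the step you yourself flag as the ``technical heart''.

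First, a misconception: $R_p^{Sp}(v)$ at $d=N$ is \emph{not} a portion of the intertwiner identity $T_p v^{\otimes\omega}=v^{\otimes\omega'}T_p$. Compare Definitions~\ref{defn:quantum_group_relations_using_T_i's} and~\ref{defn:first_d-many_columns-relations}: in $R_p^{Sp}$ both sides have the \emph{row} index summed and the \emph{column} index free, whereas in $R_p^{Gr}$ (equivalently, the intertwiner identity) the two sides are transposed relative to one another. So the ``rotating legs with $v^{-1},\bar v^{-1}$'' sketch does not apply as stated, and there is no straightforward Tannaka--Krein manoeuvre that turns $R_p^{Sp}(v)$ alone into $R_p^{Gr}(v)$.

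Second, and more importantly, you are not using the full hypothesis. Definition~\ref{defn:QSymG} requires the competitor $H$ to admit both a left \emph{and} a right matrix-vector action on $X_{N,N}(\Pi)$. Slicing the right action $\beta(x_{ij})=\sum_k v_{ki}\otimes x_{kj}$ by the same character $\chi$ yields the homomorphism $x_{ij}\mapsto v_{ji}$, hence $R_p^{Sp}(v^T)$ for all $p\in\Pi$. Now Lemma~\ref{lem:quantum_group_relations_imply_quantum_space_relations}(2) is the missing bridge: it is a short direct computation showing $R_p^{Sp}(v)\wedge R_p^{Sp}(v^T)\Rightarrow R_p^{Gr}(v)$. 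This is exactly the paper's route (Theorem~\ref{thm:hom_from_PQS_to_QSymG} followed by Corollary~\ref{cor:X_is_BSQS_if_d=N}), and it replaces your incomplete categorical argument with two lines of algebra.
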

The above result leads to a new question (compare Question 1.3): Given a set \(\Pi\), which is the minimal number \(d\) such that the latter theorem stays true?
We cannot answer this question as generally as the ones before, so we restrict to the case of \emph{non-crossing} partitions, corresponding to \emph{free easy quantum groups}. The possible quantum groups are completely classified in \cite{tarragoweberclassificationpartitions,tarragoweberclassificationunitaryQGs} and we can describe them by suitable sets \(\Pi\) of partitions (see Table \ref{table:classification_free_case} in Section \ref{sec:free_case}). Fixing these sets of partitions, we can bound the necessary \(d\) to at most 2:
\begin{thm*}[see Theorem \ref{thm:free_case_main_result}]
Let \(\Pi\) be a set of non-crossing partitions from Table \ref{table:classification_free_case}.
\begin{itemize}
\item[(i)]
For \(d\!=\!2\), the easy quantum group \(G_N(\Pi)\) is the quantum symmetry group of the partition quantum space \(X_{N,d}(\Pi)\).
\item[(ii)]If \(\Pi\) generates a \emph{blockstable category of partitions}, then (i) even holds for \(d\!=\!1\). In particular we can reconstruct the easy quantum group from the first column of its fundamental unitary.
\end{itemize}
\end{thm*}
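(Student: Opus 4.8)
The plan is to deduce both statements from the fact (Theorem~\ref{thm:existence_of_alpha_and_beta}) that $G_N(\Pi)$ already acts on $X_{N,d}(\Pi)$: what remains is to show it is the \emph{maximal} compact matrix quantum group doing so. So I would start from an arbitrary compact matrix quantum group $(C(H),v)$ with $v=(v_{ij})_{1\le i,j\le N}$ and an action $\alpha\colon C(X_{N,d}(\Pi))\to C(H)\otimes C(X_{N,d}(\Pi))$, $\alpha(x_{ij})=\sum_k v_{ik}\otimes x_{kj}$ (the right-action case being symmetric), and try to show that $v$ satisfies the relations $R^{Gr}_p(v)$ defining $C(G_N(\Pi))$ for every $p\in\Pi$. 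The universal property then produces a surjection $C(G_N(\Pi))\surj C(H)$, $u_{ij}\mapsto v_{ij}$, intertwining the comultiplications, so $H$ is a quantum subgroup of $G_N(\Pi)$ --- and in particular, by Corollary~\ref{cor:X_is_BSQS_if_d=N}, $H$ even acts on $X_{N,N}(\Pi)$. By Table~\ref{table:classification_free_case} it is enough to run this for each of the finitely many listed generating sets $\Pi$ of non-crossing partitions.

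The next step is to translate ``$\alpha$ preserves $R^{Sp}_p(x)$'' into relations on $v$. For a fixed $p\in\Pi$ the relation $R^{Sp}_p(x)$ is an explicit finite list of polynomial identities in the $x_{ij}$ with $1\le j\le d$ (see the Appendix); applying $\alpha$ and substituting $\alpha(x_{ij})=\sum_k v_{ik}\otimes x_{kj}$ turns each into an identity in $C(H)\otimes C(X_{N,d}(\Pi))$, and since $\alpha$ leaves the column index untouched only monomials in the $x_{kj}$ with $j\le d$ appear. Expanding and collecting against a fixed linear basis of the finite-dimensional span of those monomials yields polynomial relations for the $v_{ik}$. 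The one real input here is identifying that span, i.e. knowing the low-degree linear relations of $C(X_{N,d}(\Pi))$; I would pin these down by pushing through the homomorphism $\phi$ of Theorems~\ref{thm:hom_from_PQS_into_BSQG_vector_case} and~\ref{thm:hom_from_PQS_into_BSQG_d-vector_case} into $C(G_N(\Pi))$ and computing with the Haar state. Crucially, the system for $v$ one obtains contains \emph{more} than the ``columns $\le d$ only'' fragment of $R^{Gr}_p(v)$, because the defining relations of $X_{N,d}(\Pi)$ feed back into the monomial basis: already for the cap/cup partition, the spherical relation $\sum_i x_{ij}^{(*)}x_{ij}=1$ forces, through the coaction, the full $\sum_i v_{ik}^{(*)}v_{il}=\delta_{kl}$ with $k\neq l$ included.

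For part~(i) I would then verify that, for the non-crossing $\Pi$ from the table, this system with $d=2$ is equivalent to the full family $\{R^{Gr}_p(v):p\in\Pi\}$. For a non-crossing $p$, the category operations available in $\langle\Pi\rangle$ (rotation, composition with through-strings, tensoring with identity partitions) allow one to split $R^{Gr}_p(v)$ into pieces attached to single blocks and to pairs of blocks of $p$; each such piece involves at most two distinct column indices, which can be normalised to $1$ and $2$, so the $d=2$ system recovers everything. In practice this is a short case analysis over the small non-crossing partitions occurring in the table --- caps and cups, singletons, and the remaining few blocks --- each instance of the elementary kind just illustrated. For part~(ii), if $\langle\Pi\rangle$ is blockstable then every block $b$ of every $p\in\Pi$ lies in $\langle\Pi\rangle$, so $\{R^{Gr}_p(v):p\in\Pi\}$ reduces to $\{R^{Gr}_b(v):b\text{ a block of some }p\in\Pi\}$; and for a single block, the relation is recovered already from its one-repeated-column instance together with the feedback of the corresponding spherical/diagonal relation of $X_{N,1}(\Pi)$ --- the $O_N^+$/$U_N^+$ pattern again.

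I expect the hard part to be twofold. First, making the decomposition of $R^{Gr}_p(v)$ into single-block and two-block pieces rigorous: for $p$ of the form $b_1\otimes b_2$ it is immediate, but isolating interleaved blocks needs a careful, possibly delicate, use of the category operations, and this is what ultimately forces the values $1$ and $2$. Second, matching the criterion against the classification --- checking that \emph{blockstable} is exactly the property ensuring that a single column loses no information, i.e. that the non-blockstable entries of Table~\ref{table:classification_free_case} are precisely those carrying a relation $R^{Gr}_p$ whose column-index pattern cannot be collapsed to the constant one. Everything else should be bookkeeping along the lines of the $O_N^+$ computation.
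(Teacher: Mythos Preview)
Your overall scaffolding matches the paper's: start from an arbitrary $H$ acting, apply $\alpha$ to each $R^{Sp}_p(x)$, and extract relations on $v$ by reading off the first tensor leg. The paper, however, does not identify a monomial basis via the Haar state; instead it uses concrete evaluation maps $\textnormal{ev}_\sigma$ (Notation~\ref{not:evalutationmap}) factoring through $C(G_N(\Pi))\to C(S_N)\to\mathbb C$, and more generally through $C(O_2^+)$ or $C(B_3^+)$ (Notation~\ref{not:hom_from_PQS_to_M-version_of_smaller_BSQG}), to isolate the coefficients. Your proposed Haar-state computation is plausible in spirit but is not what the paper does, and you would have to justify that it gives enough independence; the paper's evaluation-map trick is more direct.

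The real gap is in your treatment of the $d=1$ case. You assert that ``already for the cap/cup partition, the spherical relation \ldots\ forces, through the coaction, the full $\sum_i v_{ik}^{(*)}v_{il}=\delta_{kl}$ with $k\neq l$ included''. For $d=1$ this is false without substantial extra work: applying $\alpha$ to $\sum_s x_{s1}x_{s1}^*=\mathds 1$ gives $\sum_{t_1,t_2}\bigl(\sum_s v_{st_1}v_{st_2}^*\bigr)\otimes x_{t_11}x_{t_21}^*=\mathds 1$, and to conclude that the off-diagonal left legs vanish you need the elements $x_{t_11}x_{t_21}^*$ (equivalently their images under $\phi$) to be linearly independent. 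The paper spends Lemmata~\ref{lem:dimension_of_subspaces_(O_2^+_and_B_3^+)}--\ref{lem:mcpp_relations_are_fulfilled_pairings_and_singleton-partition_case} precisely on this: it computes $\dim W=10$ for $O_2^+$ and $\dim V=14$ for $B_3^+$ via fusion rules to prove that $o_{11}o_{21}$ and $o_{21}o_{11}$ (resp.\ $b_{11}b_{21}$ and $b_{21}b_{11}$) are linearly independent, and only then can it extract the off-diagonal unitarity relations. When a four-block is present there is a separate $C^*$-algebraic argument (Lemma~\ref{lem:relations_of_four-block}) via partial isometries. None of this is ``bookkeeping along the lines of the $O_N^+$ computation''; it is the technical heart of part~(ii), and your proposal skips it.

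A smaller point: your ``decompose $R^{Gr}_p(v)$ into single-block and two-block pieces via category operations'' is a different and more structural route than the paper's, which simply runs through the concrete generators in Table~\ref{table:classification_free_case} case by case (two cases are spelled out, the rest declared analogous). Your approach might work, but you have not explained how interleaved blocks are separated, and the paper does not attempt such a general reduction.
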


We conjecture (see open questions in Section \ref{sec:open_questions}) that in the non-blockstable case the situation \(d\!=\!1\) does not work, i.e. that amongst all categories of non-crossing partitions the question of blockstability is equivalent to the minimal \(d\) being equal to one.
 
\section{Preliminaries}

In the context of \(C^*\)-algebras we denote by \(\otimes\) the minimal tensor product and we define \([n]:=\{1,\ldots,n\}\subseteq\N\) for all \(n\!\in\!\N\).\vspace{11pt}

In this section we give a very brief introduction to partitions of sets and how we can associate easy quantum groups to them (also known as \emph{easy quantum groups}). For more details see \cite{banicaspeicherliberation} and \cite{tarragoweberclassificationpartitions,tarragoweberclassificationunitaryQGs,weber_LNM,weber_PMS}.
Moreover, we introduce a new kind of decomposition of labelings of partitions. 
Finally, we present a quantum version of matrix-vector actions.

\subsection{Two-coloured partitions}\label{subsec:partitions}

A \emph{(two-coloured) partition} on \(k\) upper and \(l\) lower points is a partition of the ordered set \([k+l]\) into non-empty, disjoint subsets, where each element gets a label 1 (white) or * (black). The subsets of the partition are called \emph{blocks}.
We may illustrate such partitions by lines representing the blocks:
\begin{equation}\label{eqn:partition_example}
\setlength{\unitlength}{0.3cm}
p=
\begin{picture}(5,2.4)
\put(0,-1.8) {$\bullet$}
 \put(1,-1.8) {$\bullet$}
 \put(2,-1.8) {$\circ$}
 \put(0,1.6) {$\circ$}
 \put(1,1.6) {$\circ$}
 \put(2,1.6) {$\bullet$}
\put(-0.11,2.6){\partii{1}{0}{1}}
\put(-0.12,2.6){\parti{2}{2}}
\put(-0.12,-1){\uppartii{1}{1}{2}}
\put(-0.12,-1){\upparti{1}{0}}
\end{picture}\vspace{8pt}
\end{equation}
If a block contains upper and lower points, we call it a \emph{through-block}. The number of through-blocks in \(p\) is denoted by \(tb(p)\).
A partition is called \emph{non-crossing}, if the lines in the corresponding picture do not cross.
The set \(\mathcal{P}(k,l)\) contains all partitions with \(k\) upper and \(l\) lower points (in all possible labelings) and \(\mathcal{P}\!:=\!\bigcup_{k,l\in\N_0}\mathcal{P}(k,l)\) is the union of all those sets. For given words \(\omega\!\in\!\{1,*\}^k\) and \(\omega'\!\in\!\{1,*\}^l\) we denote by \(\mathcal{P}(\omega,\omega')\!\subseteq\!\mathcal{P}(k,l)\) all partitions with upper point labeling according to \(\omega\) and lower point labeling according to \(\omega'\) (from left to right, respectively).

We have some operations defined on \(\mathcal{P}\). Given \(p\!\in\!\mathcal{P}(\omega^{(1)},\omega^{(2)})\) and \(q\!\in\!\mathcal{P}(\omega^{(3)},\omega^{(4)})\) there exists a tensor product \(p\otimes q\), an involution \(p^*\) and (if \(\omega^{(2)}\!=\!\omega^{(3)}\)) a composition \(qp\). A set of partitions closed under these operations and containing \(\{\raisebox{-2pt}{\scalebox{0.7}{\idpartww,\idpartbb}\scalebox{0.9}{,\paarpartwb,\paarpartbw,\baarpartwb,\baarpartbw}}\}\) is called a \emph{category of partitions}. Here is an example of these operations using \(p\) as in Equation \ref{eqn:partition_example}:
\begin{equation*}
\setlength{\unitlength}{0.3cm}
pp^*=
\raisebox{0.6cm}{\rotatebox{180}{\reflectbox{
\begin{picture}(3,2)
\put(0,-1.8) {$\bullet$}
 \put(1,-1.8) {$\bullet$}
 \put(2,-1.8) {$\circ$}
\put(-0.11,2.6){\partii{1}{0}{1}}
\put(-0.12,2.6){\parti{2}{2}}
\put(-0.12,-1){\uppartii{1}{1}{2}}
\put(-0.12,-1){\upparti{1}{0}}
\end{picture}
}}}
\hspace{-1.18cm}
\raisebox{-0.54cm}{
\begin{picture}(3,2)
\put(0,-1.8) {$\bullet$}
 \put(1,-1.8) {$\bullet$}
 \put(2,-1.8) {$\circ$}
 \put(0,1.6) {$\circ$}
 \put(1,1.6) {$\circ$}
 \put(2,1.6) {$\bullet$}
\put(-0.11,2.6){\partii{1}{0}{1}}
\put(-0.12,2.6){\parti{2}{2}}
\put(-0.12,-1){\uppartii{1}{1}{2}}
\put(-0.12,-1){\upparti{1}{0}}
\end{picture}
}
=
\begin{picture}(3,4)
 \put(0,-1.5) {$\bullet$}
 \put(1,-1.5) {$\bullet$}
 \put(2,-1.5) {$\circ$}
 \put(0,1.4)  {$\bullet$}
 \put(1,1.4)  {$\bullet$}
 \put(2,1.4)  {$\circ$}
 
\put(1.35,-0.7){\line(0,1){0.7}}
\put(1.35,0.0){\line(1,0){1}}
\put(2.35,-0.7){\line(0,1){0.7}}

\put(1.35,0.6){\line(0,1){0.7}}
\put(1.35,0.6){\line(1,0){1}}
\put(2.35,0.6){\line(0,1){0.7}}

\put(1.83,0.0){\line(0,1){0.6}}

\put(0.34,-0.7){\line(0,1){0.7}}
\put(0.34,0.6){\line(0,1){0.7}}
\end{picture}
\vspace{11pt}\vspace{11pt}
\end{equation*}
See \cite{tarragoweberclassificationpartitions} or \cite[Appendix B]{weber_partition_cstar-algebras_I} for more on two-coloured partitions and more examples.

\subsection{Labeling of partitions}\label{subsec:labeling_of_partitions}
\begin{notation}
Let \(p\!\in\!\mathcal{P}(k,l)\). Every pair of multi indices \((t,t')\!\in\!\N^k\!\times\!\N^l\) gives rise to a labeling of the points of \(p\) by labeling the upper points from left to right by \(t=(t_1,\ldots,t_k)\) and likewise the lower points by \(t'=(t'_1,\ldots,t'_l)\). A labeling is \(\emph{valid}\) if for every block all of its points have the same label. We can also  speak of valid labelings of a subset of points if in this subset connected points are labeled equally.
\end{notation}
\begin{defn}\label{defn:delta_p}
Every partition \(p\!\in\!\mathcal{P}(k,l)\) gives rise to a function
\[\delta_p: \N^k\!\times\!\N^l\rightarrow\{0,1\}\;;\;\delta_p(i,j)=
\begin{cases}
1 & (i,j)\textnormal{ is a valid labeling of }p,\\
0 & \textnormal{otherwise.}
\end{cases}
\]
\end{defn}
As an example, consider the partition \(p\) from Equation \ref{eqn:partition_example}. A labeling \(t\!=\!(t_1,t_2,t_3)\) of the upper row is valid if \(t_1\!=\!t_2\). Likewise \(t'=(t'_1,t'_2,t'_3)\) is valid for the lower row if \(t'_2\!=\!t'_3\). The pair \((t,t')\) is valid for \(p\)  if additionally \(t_3\!=\!t_2'\) holds, i.e. the labelings on the through-block fit together. 

\subsection{Decomposition of labelings}
We use the observation above to decompose given sets of multi indices into disjoint subsets, due to their validity as labelings.
\begin{notation}\label{not:partitioning_of_labelings}
Given \(p\!\in\!\mathcal{P}(k,l)\) and \(N\!\in\!\N\) we can decompose the sets \([N]^k\) and \([N]^l\) in the following way:
\[[N]^k=T_0\,\dot{\cup}\,T_1\,\dot{\cup}\,\ldots\,\dot{\cup}\,T_r\quad;\quad[N]^l=T'_0\,\dot{\cup}\,T'_1\,\dot{\cup}\,\ldots\,\dot{\cup}\,T'_r,\]
such that 
\begin{compactitem}
\item [(i)] \(r=N^{tb(p)}\), where \(tb(p)\) denotes the number of through-blocks of \(p\),
\item [(ii)]\(T_0\) and \(T'_0\) are the invalid labelings of the upper (respectively lower) row,
\item [(iii)]for every \(1\!\le\!i\!\le\!r\) every labeling \((t,t')\!\in\!T_i\!\times\!T'_i\) is valid,
\item[(iv)] for every \(1\!\le\!i\!\le\!r\) the sets \(T_i\) and \(T'_i\) are non-empty.
\item [(v)]if \((t,t')\!\in\![N]^k\!\times\![N]^l\) is a valid labeling, then \((t,t')\!\in\!T_i\!\times\!T'_i\)  for some \(1\!\le\!i\!\le\!r\),
\item [(vi)]for every \(1\!\le\!i\!\le\!r\) and \((t,t'),(s,s')\!\in\!T_i\!\times\!T'_i\) we have that \((t,t')\) labels the through-block points of \(p\) the same way as \((s,s')\) does.
\end{compactitem}
The listed properties above are partially redundant. For example (iv)--(vi) follow from (i)--(iii). 
\end{notation}
\begin{rem}\label{rem:case_k,l=0}
Note the special case of \(0\!\in\!\{k,l\}\): An empty row has only one possible labeling (which is valid), namely the empty word \(\epsilon\!\in\![N]^0\). So if for example a partition has only lower points, then \(r=1\), \(T_1=\{\epsilon\}\) and \(T_0\) is empty.

Furthermore note, that \(|T_i|=|T_j|\) and \(|T'_i|=|T'_j|\) for all \(1\!\le\!i,j\!\le\!r\) as the possibilities to extend a valid through-block labeling to a valid labeling of the whole row does not depend on the actual through-block labeling.
\end{rem}

\begin{lem}
Decompositions of \([N]^k\) and \([N]^l\) as in Notation \ref{not:partitioning_of_labelings} exist and they are unique up to permutations of the index set \(\{i\;|\;1\!\le\!i\!\le\!r\}\).
\end{lem}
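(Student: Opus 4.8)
The plan is to exhibit one such decomposition explicitly, built directly from the through-blocks of \(p\), and then to show that any decomposition obeying (i)--(vi) must have this form, the only freedom being the numbering of the positive indices.

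For existence, I would first put \(m:=tb(p)\) and let \(B_1,\dots,B_m\) be the through-blocks of \(p\). Call a \emph{through-block labeling} a map \(c\colon\{1,\dots,m\}\to[N]\); there are exactly \(N^m=r\) of these, and I use them to index the parts \(1,\dots,r\). Given \(c\), let \(T_c\subseteq[N]^k\) be the set of \(t\) that are valid labelings of the upper row and that restrict to the constant \(c(j)\) on the upper points of \(B_j\) for each \(j\); define \(T'_c\subseteq[N]^l\) the same way using the lower row, and put \(T_0:=[N]^k\setminus\bigcup_c T_c\) and \(T'_0:=[N]^l\setminus\bigcup_c T'_c\). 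Then I would verify (i)--(vi) in turn. The points needing an argument are: (ii), that \(\bigcup_c T_c\) is precisely the set of valid upper labelings, which holds because a valid upper labeling colours the upper points of each \(B_j\) with a single colour and hence restricts to exactly one \(c\) (this simultaneously gives disjointness of the \(T_c\) and identifies \(T_0,T'_0\) as the invalid labelings); (iv), that each \(T_c\) is nonempty, since \(c\) extends to a valid upper labeling by colouring every purely-upper block with \(1\), and analogously for \(T'_c\) — the degenerate cases \(k=0\) or \(l=0\) force \(m=0\) and \(r=1\), consistent with Remark \ref{rem:case_k,l=0}; and (iii), that a pair \((t,t')\in T_c\times T'_c\) is a valid labeling of all of \(p\), because every row-internal block is validly coloured by assumption while every through-block \(B_j\) carries the single colour \(c(j)\) throughout. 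Items (v) and (vi) are then immediate from the construction.

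For uniqueness, suppose two decompositions \((T_i),(T'_i)\) and \((S_j),(S'_j)\) satisfy (i)--(vi); by (i) both have \(r=N^{tb(p)}\) positive parts, and by (ii) their zero-parts coincide (the invalid labelings). The key idea is that each positive part is completely determined by the way it labels the through-block points of \(p\). Fix \(i\ge1\): by (iv) and (iii) every element of \(T_i\) is a valid upper labeling, and by (vi) they all induce one and the same through-block labeling \(c_i\); since \(\bigcup_{i\ge1}T_i\) is exactly the set of valid upper labelings (by (ii)), the \(T_i\) form a refinement into \(r\) nonempty blocks of the partition of the valid upper labelings according to their induced through-block labeling. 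The main obstacle — and really the heart of the argument — is the counting step: that latter partition has exactly \(N^{tb(p)}=r\) nonempty classes (every through-block labeling is realized, just as in the existence construction), and a partition into \(r\) nonempty sets that refines another partition into \(r\) nonempty classes must coincide with it. Hence \(i\mapsto c_i\) is a bijection onto the set of through-block labelings, and \(T_i\) is forced to be the full set of valid upper labelings inducing \(c_i\); the identical reasoning pins down \(T'_i\) via a through-block labeling \(c'_i\), and applying (iii) to a pair in \(T_i\times T'_i\) forces \(c_i=c'_i\) (the colours on the upper and lower ends of each through-block must agree). Running this analysis for both decompositions and matching up the through-block labelings produces the permutation \(\sigma\) of \(\{1,\dots,r\}\) with \(T_i=S_{\sigma(i)}\) and \(T'_i=S'_{\sigma(i)}\), extended by \(\sigma(0)=0\), which is exactly the asserted uniqueness.
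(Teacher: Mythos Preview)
Your proof is correct and follows essentially the same route as the paper's: both construct the decomposition by indexing the positive parts by the \(N^{tb(p)}\) through-block labelings and then, for uniqueness, argue that each \(T_i\) is constant on through-block labels and hence---by the counting \(r=N^{tb(p)}\)---must be exactly one full equivalence class. The only cosmetic difference is that you invoke property~(vi) to see that all elements of a given \(T_i\) share a through-block labeling, whereas the paper derives this from (iii) and (iv); either works.
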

\begin{proof}
\emph{Existence:} Define \(T_0\) and \(T'_0\) as described in (ii). As \(p\) has \(tb(p)\) through-blocks we have \(r=[N]^{tb(p)}\) possibilities to label the through-block points in a valid way.
Numbering these possibilities from 1 to \(r\) we can take any \(1\!\le\!i\!\le\!r\) and extend it to labelings of the whole partition. This defines the sets \(T_i\) and \(T'_i\):
\begin{align*}
T_i:=&\{\textnormal{all valid labelings of the upper row of }p\textnormal{ with through-block labeling }i\}\\
T'_i:=&\{\textnormal{all valid labelings of the lower row of }p\textnormal{ with through-block labeling }i\}
\end{align*}
It is now easy to check, that the properties (i)--(vi) are fulfilled:
(i) and (ii) hold by construction.
(iii) is true as given labelings \(t\!\in\!T_i\) and \(t'\!\in\!T'_i\) are valid for their respective row and the through-block labelings fit together as both \(t\) and \(t'\) arise from a common through-block labeling \(i\).
Obviously (iv) is true, as we can always extend a valid through-block labeling by labeling all remaining points the same.
For property (v) note, that a valid labeling \((t,t')\) always restricts to a valid labeling of the through-block points.
If this through-block labeling corresponds to \(i\!\in\![r]\), then \(t\) and \(t'\) appear in the construction of \(T_i\) and \(T'_i\), respectively.
Property (vi) is fulfilled, as by construction \((t,t')\) and \((s,s')\) arise from the same through-block labeling.
Finally we check \([N]^k=T_0\dot{\cup}\ldots\dot{\cup}T_r\) (the proof of \([N]^l=T'_0\,\dot{\cup}\,T'_1\,\dot{\cup}\,\ldots\,\dot{\cup}\,T'_r\) is analogous):
By construction only \(T_0\) contains non-valid labelings of the upper row and it contains all of them.
Every valid upper row labeling appears as it restricts to a valid labeling of the upper through-blocks, which can be extended to a valid through-block labeling \(i\!\in\![r]\) of both rows.
On the other side the \(T_1,\ldots,T_r\) are disjoint as different through-block labelings \(1\!\le\!i\!\neq\!j\!\le\!r\) always differ when restricted to only one row, so \(T_i\cap T_j=\emptyset\).
\newline
\emph{Uniqueness:}
Of course \(T_0\) and \(T'_0\) are uniquely defined.
Consider now two valid labelings \(t\) and \(s\) of the upper row.
Assume that they do not restrict to the same labeling of upper through-block points but are contained in the same \(T_i\). Then for any \(t'\!\in\!T'_i\) -- we have \(T'_i\!\neq\!\emptyset\) by (iv) -- the labelings \((t,t')\) and \((s,t')\) were valid for the whole partition by (iii).
This is a contradiction, as \(t'\) uniquely determines the upper through-block labelings both of \(t\) and \(s\).
As there are \(N^{tb(p)}\) pairwise different valid labelings of the upper through-block points and \(r\!=\!N^{tb(p)}\) by property (i), the sets \(T_1,\ldots,T_r\) must be the (pairwise different) equivalence classes of valid upper row labelings with respect to the relation ``equality on through-block points''.
Having now the sets \(T_1\ldots,T_r\) (and likewise \(T'_1,\ldots,T'_r\)) at hand, property (iii) says that \(T'_i\) must correspond to the same through-block labeling as \(T_i\). So up to (simultaneous) permutations of the index set \(\{i\;|\;1\!\le\!i\!\le\!r\}\) we have uniqueness as claimed.
\end{proof}

\begin{ex}\label{ex:decomposition_of_labelings}
Consider again the partition \(p\) from Equation \ref{eqn:partition_example}, so \(r\!=\!N^{tb(p)}\!=\!N\). A pair \((T_i,T'_i)\) for \(1\!\le\!i\!\le\!N\) corresponds to a distinct valid labeling of the through-block points (i.e. \(t_3\!=\!t'_2\!=\!t'_3\)). So we have
\begin{align*}
T_0=\{(t_1,t_2,t_3)\!\in\![N]^3\;|\; t_1\!\neq\!\!t_2\}\quad&,\quad T'_0=\{(t'_1,t'_2,t'_3)\!\in\![N]^3\;|\;t'_2\!\neq\!\!t'_3\}\\
T_i=\{(t,t,i)\!\in\![N]^3\;|\;t\!\in\![N]\}\quad\;\;\;&,\quad T'_i=\{(t',i,i)\!\in\![N]^3\;|\;t'\!\in\![N]\}\quad \textnormal{ for }1\!\le\!i\!\le\!N.
\end{align*}
\end{ex}

\subsection{Compact matrix quantum groups}

In this work only one class of quantum groups is relevant, namely \emph{compact matrix quantum groups (CMQGs)} as defined by S.L. Woronowicz in \cite{woronowiczpseudogroups}:
\begin{defn}\label{defn:CMQG}
Let \(N\!\in\!\N\) and \(u_G:=(u_{ij})\) be an \(N\!\times\!N\)-matrix with entries in some unital \(C^*\)-algebra \(\mathcal{A}\). Assume the following:
\begin{itemize}
\item[(i)] The entries \(u_{ij}\) generate \(\mathcal{A}\) as a \(C^*\)-algebra.
\item[(ii)] The matrices \(u_G\) and \(\bar{u}_G:=(u^*_{ij})\) are invertible.
\item[(iii)] There is a  \(^*\)-homomorphism \(\Delta:\mathcal{A}\rightarrow\mathcal{A}\otimes\mathcal{A}\) called \emph{co-multiplication}, fulfilling  
\[\Delta(u_{ij})=\sum_{k=1}^{N}u_{ik}\otimes u_{kj}.\]
\end{itemize}
Then we denote \(\mathcal{A}\) also by \(C(G)\) and call it the \emph{non-commutative functions} on an (abstractly given) \emph{compact matrix quantum group (CMQG) \(G\)}.
\end{defn}
The reason for this name is, that in the case of a commutative \(C^*\)-algebra \(\mathcal{A}\), the object \(G\) really is a matrix group and the \(u_{ij}\) are the coordinate functions in the algebra \(C(G)\) of continuous functions on \(G\). See \cite{neshveyevtuset,timmermann,weber_PMS} for more on CMQGs.

\subsection{Easy quantum groups}

\begin{defn}\label{defn:quantum_group_relations_using_T_i's}
Let \(N\!\in\!\N\), \(u\!:=\!(u_{ij})\) an \(N\!\times\!N\)-matrix of generators and \(p\!\in\!\mathcal{P}(\omega,\omega')\!\subseteq\!\mathcal{P}(k,l)\) be a partition. Using the Notations \ref{not:partitioning_of_labelings}, we associate the following relations to the \(u_{ij}\), denoted by \(R^{Gr}_p(u)\):
\begin{itemize}
\item[(i)]
\(\displaystyle\sum_{t\in T_i} u_{t_1\gamma_1}^{\omega_1}\cdots u_{t_k\gamma_k}^{\omega_k}
=\sum_{t'\in T'_j} u_{\gamma'_1t'_1}^{\omega'_1}\cdots u_{\gamma'_lt'_l}^{\omega'_l}\quad,\quad 1\!\le \!i,j\!\le\!r\), \(\gamma\!\in\!T_{j}\textnormal{ and }\gamma'\!\in\!T'_{i}.\)

\item[(ii)]
\(\displaystyle\sum_{t\in T_i}u_{t_1\gamma_1}^{\omega_1}\cdots u_{t_k\gamma_k}^{\omega_k}=0\quad,\quad 1\!\le\!i\!\le\!r\textnormal{ and }\gamma\!\in\!T_0.\)

\item[(iii)]
\(\displaystyle\sum_{t'\in T'_j}u_{\gamma'_1t'_1}^{\omega'_1}\cdots u_{\gamma'_lt'_l}^{\omega'_l}=0\quad,\quad 1\!\le\!j\!\le\!r\textnormal{ and }\gamma'\!\in\!T'_0.\)
\end{itemize}
\end{defn}

Note for (i) (compare Remark \ref{rem:case_k,l=0}), that in case of \(0\!\in\!\{k,l\}\) the sum on the corresponding side is equal to \(\mathds{1}\), corresponding to the empty word \(\epsilon\).

\begin{defn}\label{defn:BSQG}
Let \(N\!\in\!\N\) be given as well as a set \(\Pi\) of partitions including the four \emph{mixed-coloured pair partitions} \(\{\paarpartwb,\paarpartbw,\baarpartwb,\baarpartbw\}\). Let further \(u_{G_N(\Pi)}:=(u_{ij})\) be an \(N\!\times\!N\)-matrix of generators. Then we define the universal \(C^*\)-algebra 
\[C\big(G_N(\Pi)\big):=C^*(u_{ij}\;|\;\forall p\!\in\!\Pi:\textnormal{ the relations }R^{Gr}_p(u_{G_N(\Pi)})\textnormal{ hold})\]
and call it the \emph{non-commutative functions} on the \emph{easy quantum group \(G_N(\Pi)\)}. 
\end{defn}

\begin{rem}
Note that the relations \(\big\{R_p^{Gr}(u_{G})\;|\;p\!\in\!\{\paarpartwb,\paarpartbw,\baarpartwb,\baarpartbw\}\big\}\) are equivalent to the fact that \(u_Gu_G^*\), \(\bar{u}_G\bar{u}_G^*\), \(\bar{u}_G^*\bar{u}_G\) and \(u_G^ *u_G\), respectively, are equal to \(\mathds{1}\), i.e. \(u_G\) and \(\bar{u}_G\) are unitaries. So the theory of easy quantum groups is a theory of unitary quantum matrices.
\end{rem}

\begin{rem}
From the perspective of \cite{banicaspeicherliberation} and \cite{tarragoweberclassificationunitaryQGs} the definitions above are a reformulation of the original ones, adapted to our purposes. We outline some key ideas of the theory presented there.
\begin{itemize}
\item [(i)]To every set of partitions \(\Pi\) as above we can consider the category of partitions \(\mathcal{C}:=\langle\Pi\rangle\)  generated by \(\Pi\).
\item [(ii)]Every partition \(p\!\in\!\mathcal{P}(k,l)\cap\mathcal{C}\) corresponds to a linear map \(T_p:\left(\C^N\right)^{\otimes k}\!\rightarrow\! \left(\C^N\right)^{\otimes l}\). Here the delta-function \(\delta_p\) from Definition \ref{defn:delta_p} plays a central role.
\item [(iii)]The linear span of the maps \(\big(T_p\big)_{p\in\mathcal{C}}\) is a \emph{concrete monoidal \(W^*\)-category}.
\item [(iv)]By Tannaka-Krein duality for CMQGs, see \cite{woronowicztannakakrein}, every such category produces a CMQG, \(G_N(\mathcal{C})\), such that the intertwiner spaces
\[Mor_{G_N(\Pi)}(\omega,\omega'):=\{T:\left(\C^N\right)^{\otimes |\omega|}\rightarrow \left(\C^N\right)^{\otimes |\omega'|}\textnormal { linear}\;|\;Tu^{\otimes\omega}=u^{\otimes\omega'}T\}\]
coincide with the linear spans of the maps \(\left(T_p\right)_{p\in\mathcal{P}(\omega,\omega')}\).

\item [(v)]The construction of \(G_N(\mathcal{C})\) is straightforward: Every equation \(T_pu^{\otimes\omega}=u^{\otimes\omega'}T_p\) can be seen as relations for the matrix entries \(u_{ij}\). The universal \(C^*\)-algebra generated by the \(u_{ij}\) and all these relations turns out to be the object \(C\left(G_N(\mathcal{C})\right)\).
\item [(vi)]Using the definitions of the maps \(T_p\) from \cite{tarragoweberclassificationunitaryQGs}, the relations \(T_pu^{\otimes\omega}=u^{\otimes\omega'}T_p\), associated to a partition \(p\!\in\!\mathcal{P}(\omega,\omega')\!\subseteq\!\mathcal{P}(k,l)\) read as
\begin{equation}\label{eqn:partition-induced_relations_for_the_u_ij_preliminaries}
\sum_{t\in[N]^k}\delta_p(t,\gamma')u_{t_1\gamma_1}^{\omega_1}\cdots u_{t_k\gamma_k}^{\omega_k}
=\sum_{t'\in[N]^l}\delta_p(\gamma,t')u_{\gamma'_1t'_1}^{\omega'_1}\cdots u_{\gamma'_lt'_l}^{\omega'_l}
\end{equation}
for every \((\gamma,\gamma')\!\in\![N]^k\!\times\![N]^l\).
The behaviour of \(\delta_p\) on \([N]^k\!\times\![N]^l\) is encoded by the decomposition into the subsets \(T_i\) and \(T'_i\), see Notation \ref{not:partitioning_of_labelings}.
So Equation \ref{eqn:partition-induced_relations_for_the_u_ij_preliminaries} yields exactly the relations \(R^{Gr}_p(u)\) from Definition \ref{defn:quantum_group_relations_using_T_i's}. Indeed observe that for \(\gamma\!\in\!T_j\), \(\gamma'\!\in\!T'_i\) and \(1\!\le\!i,j\!\le\!r\) we have
\[\sum_{t\in[N]^k}\delta_p(t,\gamma')u_{t_1\gamma_1}^{\omega_1}\cdots u_{t_k\gamma_k}^{\omega_k}
=\sum_{t\in T_i}u_{t_1\gamma_1}^{\omega_1}\cdots u_{t_k\gamma_k}^{\omega_k}\]
and
\[\sum_{t'\in[N]^l}\delta_p(\gamma,t')u_{\gamma'_1t'_1}^{\omega'_1}\cdots u_{\gamma'_lt'_l}^{\omega'_l}
=\sum_{t'\in T'_j}u_{\gamma'_1t'_1}^{\omega'_1}\cdots u_{\gamma'_lt'_l}^{\omega'_l}\]
as \(\delta_p(t,\gamma')=1\Leftrightarrow t\!\in\!T_i\) and \(\delta_p(\gamma,t')=1\Leftrightarrow t'\!\in\!T'_j\).
\end{itemize}
\end{rem}
As we will also use it later on (see Lemma \ref{lem:first_column-relations} and Theorem \ref{thm:free_case_main_result}), we separately mention also the following result.
\begin{lem}\label{lem:relations_on_generators_imply_relations_for_category}
Let \(u_G:=(u_{ij})\) be a matrix of generators associated to a compact matrix quantum group \(G\). Let \(\Pi\) be a set of partitions including  \(\{\paarpartwb,\paarpartbw,\baarpartwb,\baarpartbw\}\) such that the relations \(R^{Gr}_p(u_G)\) hold for all \(p\!\in\!\Pi\). Then also \(R^{Gr}_{q}(u_G)\) hold for every \(q\) in the category \(\mathcal{C}\!=\langle\Pi\rangle\) generated by \(\Pi\). In particular, \(R^{Gr}_q(u_G)\) holds for \(q\!=\!pp^*\), \(q=p^*\) and for any rotated version \(q\!=\!\textnormal{rot}(p)\) of \(p\) (see \cite{tarragoweberclassificationpartitions,weber_partition_cstar-algebras_I}.
\end{lem}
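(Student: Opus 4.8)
The plan is to reduce everything to the categorical statement already available in the literature, namely that the relations $R_p^{Gr}(u_G)$ for $p \in \Pi$ force $T_p \in \mathrm{Mor}_G(\omega,\omega')$ for all $p \in \mathcal{C} = \langle \Pi \rangle$. First I would invoke the discussion in the preceding remark: the relations $R_p^{Gr}(u_G)$ are, by the rewriting in Equation \ref{eqn:partition-induced_relations_for_the_u_ij_preliminaries}, literally equivalent to the intertwiner identity $T_p u_G^{\otimes \omega} = u_G^{\otimes \omega'} T_p$ for $p \in \mathcal{P}(\omega,\omega')$. Since the four mixed-coloured pair partitions lie in $\Pi$, the matrices $u_G$ and $\bar u_G$ are unitary, so $G$ is a genuine compact matrix quantum group and its full collection of intertwiner spaces $\mathrm{Mor}_G$ is a concrete monoidal $W^*$-category in the sense of Woronowicz's Tannaka--Krein duality.

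The key step is then the observation that the assignment $p \mapsto T_p$ is a functor of monoidal $W^*$-categories: it is compatible with tensor products ($T_{p \otimes q} = T_p \otimes T_q$), with composition ($T_{qp} = N^{c(q,p)} T_q T_p$ for the appropriate number of removed loops $c(q,p)$), and with the involution ($T_{p^*} = T_p^*$); these identities are standard and recorded in \cite{tarragoweberclassificationpartitions,weber_partition_cstar-algebras_I}. Because $\mathrm{Mor}_G$ is closed under tensor products, composition, and adjoints, and because it contains $T_q$ for every $q$ in the generating set $\{\raisebox{-2pt}{\scalebox{0.7}{\idpartww,\idpartbb}\scalebox{0.9}{,\paarpartwb,\paarpartbw,\baarpartwb,\baarpartbw}}\} \cup \Pi$, it must contain $T_q$ for every $q$ in the category $\mathcal{C}$ generated by this set. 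Here I would be slightly careful about the loop-removal scalar: a composition $qp$ in the category may introduce a factor $N^{c}$, but this is a nonzero scalar, so $T_q T_p \in \mathrm{Mor}_G \Rightarrow T_{qp} \in \mathrm{Mor}_G$ still holds. Translating back via the same rewriting as before, $T_q \in \mathrm{Mor}_G(\omega,\omega')$ for $q \in \mathcal{P}(\omega,\omega')$ is exactly the statement that $R_q^{Gr}(u_G)$ holds.

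Finally, the ``in particular'' clauses are immediate instances: $pp^*$, $p^*$, and $\mathrm{rot}(p)$ all lie in $\langle \Pi \rangle$ whenever $p$ does, since a category of partitions is by definition closed under composition, involution, and rotation (rotation being expressible through tensoring with pair partitions and composing, as in \cite{tarragoweberclassificationpartitions,weber_partition_cstar-algebras_I}). I expect the only real subtlety to be bookkeeping around the loop-removal constants and the empty-word conventions (the case $0 \in \{k,l\}$ flagged in Remark \ref{rem:case_k,l=0}), but these do not affect the logic: every operation generating $\mathcal{C}$ is matched by an operation under which $\mathrm{Mor}_G$ is closed, up to a harmless nonzero scalar, so the functorial image of $\mathcal{C}$ lands inside $\mathrm{Mor}_G$ and the conclusion follows.
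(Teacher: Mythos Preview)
Your proposal is correct and matches the paper's own approach: the paper does not give an explicit proof of this lemma but states it as a consequence of the preceding remark, which records precisely the equivalence between $R_p^{Gr}(u_G)$ and the intertwiner identity $T_p u_G^{\otimes\omega}=u_G^{\otimes\omega'}T_p$ together with the functoriality of $p\mapsto T_p$ under the category operations. Your write-up simply spells out the details the paper leaves to the cited references, including the harmless loop-removal scalar and the closure of $\mathrm{Mor}_G$ under the relevant operations.
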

Originally, easy quantum groups were defined as exactly those CMQGs \(G\), whose intertwiner spaces are given via categories of partitions, but due to Lemma \ref{lem:relations_on_generators_imply_relations_for_category} we only need to consider suitable generating sets \(\Pi\) of partitions in order to completely understand the universal \(C^*\)-algebras \(C(G)\). This allows us to write \(G_N(\Pi)\) instead of \(G_N(\langle\Pi\rangle)=G_N(\mathcal{C})\) and so justifies Definition \ref{defn:BSQG}.

\subsection{Actions}\label{subsec:actions}

In this article we quantize the situation of matrices \(M\) acting on tupels of vectors by entrywise left and right multiplication:
\[(v^{(1)},\ldots,v^{(d)})\mapsto(Mv^ {(1)},\ldots,Mv^ {(d)})\quad;\quad (v^{(1)},\ldots,v^{(d)})\mapsto(M^Tv^ {(1)},\ldots,M^Tv^ {(d)}).\]
The case \(d\!=\!1\) is well-known, however, we need it for more general \(d\!\ge\!1\).
\begin{defn}\label{defn:actions_CMQG_CMQS_d-vector_case}
Let \(u_G\!:=\!(u_{ij})_{1\le i,j\le N}\) be a matrix of generators associated to a CMQG \(G\). Let
\[x:=\left(\begin{pmatrix}x_{11}\\ \vdots\\ x_{N1}\end{pmatrix},\ldots,\begin{pmatrix}x_{1d}\\ \vdots\\ x_{Nd}\end{pmatrix}\right)\] 
be a tupel of \(d\) vectors whose entries generate a unital \(C^*\)-algebra \(C(X)\), associated to a compact quantum space \(X\).
\begin{itemize}
\item[(i)]A \emph{left matrix-vector action} \(G\!\curvearrowright\!X\) is a unital \(^*\)-homomorphism \(\alpha\!:C(X)\!\rightarrow\! C(G)\otimes C(X)\) satisfying
\[\alpha(x_{ij})=\displaystyle\sum_{k=1}^{N} u_{ik}\otimes x_{kj}\quad\quad\textnormal{for }1\!\le\!i\!\le\!N,1\!\le\!j\!\le\!d.\]
\item[(ii)]A \emph{right matrix-vector action} \(X\!\curvearrowleft\!G\) is a unital \(^*\)-homomorphism \(\beta\!:\!C(X)\!\rightarrow\! C(G)\otimes C(X)\)  satisfying
\[\beta(x_{ij})=\displaystyle\sum_{k=1}^{N} u_{ki}\otimes x_{kj}\quad\quad\textnormal{for }1\!\le\!i\!\le\!N,1\!\le\!j\!\le\!d.\]
\end{itemize}
\end{defn}
Note that \(\alpha\) and \(\beta\) are nothing but actions on each of the quantum vectors \(\begin{pmatrix}x_{1j}\\ \vdots\\ x_{Nj}\end{pmatrix}\). In other words, for a fixed \(j\), the restriction of \(\alpha\) to \(C^*(x_{1j},\ldots,x_{Nj})\) yields an action of the CMQG on quantum vectors in the well-known sense.
\begin{rem}
The above maps are  special cases of a general left/right action of a compact quantum group \(G\) on a compact quantum space \(X\) (see for example \cite[Definition 1.4]{podles1995}). Those are given by  \(^*\)-homomorphisms \(\tilde{\alpha},\tilde{\beta}:C(X)\!\rightarrow\! C(G)\otimes C(X)\) satisfying
\begin{itemize}
\item[ (a)] \((\Delta\otimes\mathds{1})\circ\tilde{\alpha}=(\mathds{1}\otimes\tilde{\alpha})\circ\tilde{\alpha}\)
\item[ (a')] \((\Sigma\otimes\mathds{1})(\Delta\otimes\mathds{1})\circ\tilde{\beta}=(\mathds{1}\otimes\tilde{\beta})\circ\tilde{\beta}\),\newline
where \(\Sigma\) denotes the flip map given by \(x\!\otimes\!y\mapsto y\!\otimes\!x\),
\item[(b)] \(\big(\tilde{\alpha}\big(C(X)\big)\big(C(G)\otimes\mathds{1}\big)\big)\) and \(\big(\tilde{\beta}\big(C(X)\big)\big(C(G)\otimes\mathds{1}\big)\big)\) are linearly dense in \(C(G)\otimes C(X).\)
\end{itemize}
Considering the maps \(\alpha\) and \(\beta\) from Definition \ref{defn:actions_CMQG_CMQS_d-vector_case}, the proof of property (a) and (a') is a direct consequence of the special form of \(\Delta\) (see Definition \ref{defn:CMQG}) and (b) follows from the invertibility of \(u_G\) and \(\bar{u}_G\).
\end{rem}

\section{Definition of partition quantum spaces}

\subsection{The case of one vector}


We motivate our work by regarding the classical case and the one vector case first.
\begin{ex}\label{ex:S_N}
The symmetric group \(S_N\!\subset\!U_N\!\subset\!M_N(\C)\) is an easy quantum group and it canonically acts on the vector set \(X\!=\!\{e_1,\ldots,e_N\}\!\subseteq\!\C^N\), the standard orthonormal basis of \(\C^N\). We observe that this set of vectors coincides with the set formed by the first columns of all matrices in \(S_N\).
\end{ex}
Translating this observation to the more general setting of an easy quantum group \(G_N(\Pi)\), we should be able to construct quantum spaces inspired by one column in the matrix of generators \(u_{G_N(\Pi)}\). The question therefore is, which structure we have within one column of \(u_{G_N(\Pi)}\).
The relations between the matrix entries \(u_{ij}\) are given by partitions as described in Definition \ref{defn:quantum_group_relations_using_T_i's}.
In general, the resulting equations do not stay within one column, so our first aim is to extract from a given partition certain relations that do so.
\begin{lem}\label{lem:first_column-relations}
Let \(N\!\in\!\N\) and \(p\!\in\!P(\omega,\omega')\!\subseteq\!P(k,l)\) be a partition. Let \(G\) be an easy quantum group such that the relations \(R^{Gr}_p(u_G)\) are fulfilled. 
Using Notation \ref{not:partitioning_of_labelings} we have for all \(1\!\le\!i\!\le\!r\)
\begin{equation}\label{eqn:partition_induced_relations_for_the_u_i1_vector_case}
\sum_{t\in T_i}u_{t_11}^{\omega_1}\dots u_{t_k1}^{\omega_k}=\sum_{t'\in T'_i}u_{t'_11}^{\omega'_1}\dots u_{t'_l1}^{\omega'_l}.
\end{equation}
\end{lem}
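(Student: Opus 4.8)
The plan is to derive Equation~\eqref{eqn:partition_induced_relations_for_the_u_i1_vector_case} as a special case of the relations $R^{Gr}_p(u_G)$ from Definition~\ref{defn:quantum_group_relations_using_T_i's} by making a careful choice of the free multi-indices $\gamma$ and $\gamma'$ appearing there. Recall that relation (i) in that definition reads
\[
\sum_{t\in T_i} u_{t_1\gamma_1}^{\omega_1}\cdots u_{t_k\gamma_k}^{\omega_k}
=\sum_{t'\in T'_j} u_{\gamma'_1t'_1}^{\omega'_1}\cdots u_{\gamma'_lt'_l}^{\omega'_l},
\qquad \gamma\in T_j,\ \gamma'\in T'_i.
\]
To land inside the first column on both sides, I want every $\gamma_m$ and every $\gamma'_n$ to equal $1$. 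So first I would check that the constant tuple $(1,1,\ldots,1)$ is a \emph{valid} labeling (of the upper row, and of the lower row): labeling all points with the same symbol $1$ is trivially valid, since within every block all points then carry the label $1$. Hence by property (v) of Notation~\ref{not:partitioning_of_labelings}, the all-ones tuple of length $k$ lies in some $T_{j_0}$ and the all-ones tuple of length $l$ lies in some $T'_{j_1}$.

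The key observation is that these two live in \emph{compatibly indexed} pieces, i.e. $j_0 = j_1$ after matching up the indexing. Indeed, by property (vi) the index of a $T$-class is determined by the labeling it induces on the through-block points; the all-ones upper tuple induces the all-ones labeling on the upper through-block points, and the all-ones lower tuple induces the all-ones labeling on the lower through-block points, and these agree (a through-block is a single block, so "all ones" on its upper part matches "all ones" on its lower part). Since the pairing $T_i \leftrightarrow T'_i$ in the decomposition is precisely "same through-block labeling" (property (iii) together with (vi)), the all-ones upper tuple and the all-ones lower tuple sit in $T_{j}$ and $T'_{j}$ for one and the same $j$. Call this common index $j_*$.

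Now apply relation (i) of Definition~\ref{defn:quantum_group_relations_using_T_i's} with the free index $\gamma := (1,\ldots,1)\in T_{j_*}$ on the upper side and $\gamma' := (1,\ldots,1)\in T'_{j_*}$ on the lower side, and with the summation index taken to be an arbitrary $i$ with $1\le i\le r$ on \emph{both} sides (this is legitimate: relation (i) holds for all $1\le i,j\le r$, so in particular for the pair $(i, j_*)$, and the right-hand sum ranges over $T'_{j}$ where $j=j_*$ — wait, I need the right side to range over $T'_i$, so I instead take the pair where the \emph{left} summation set is $T_i$ and the \emph{right} summation set is $T'_i$, which forces $\gamma\in T_i$ and $\gamma'\in T'_i$; but I want $\gamma,\gamma'$ to be all-ones). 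The clean way: relation (i) with summation sets $T_i$ (left) and $T'_{i}$ (right) requires $\gamma\in T_i$ and $\gamma'\in T'_i$ — this is not what I want. Instead use the symmetry: relation (i) also gives, for summation over $T_i$ on the left, equality with summation over $T'_j$ on the right for \emph{any} $j$, with $\gamma\in T_j$, $\gamma'\in T'_i$. Taking $j=j_*$, $\gamma=(1,\dots,1)$ and then separately taking $\gamma'\in T'_i$ arbitrary, one gets $\sum_{t\in T_i} u^{\omega}_{t,1} = \sum_{t'\in T'_{j_*}} u^{\omega'}_{\gamma',t'}$; combining two such identities for indices $i$ and $i'$ and choosing $\gamma'$ appropriately, or more directly applying relation (i) a second time with roles of upper/lower swapped, yields $\sum_{t\in T_i}u^{\omega}_{t,1} = \sum_{t'\in T'_i}u^{\omega'}_{t',1}$. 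I expect the main obstacle to be exactly this bookkeeping: relation (i) pairs a summation index with a \emph{free} index on the \emph{other} side, so to get the free indices all equal to $1$ \emph{and} the two summation sets to carry the same index $i$, one must apply the relation twice (once to replace the lower free index by all-ones via the $T_{j_*}$ class, once to replace the upper free index similarly) and then use transitivity of equality. The degenerate case $0\in\{k,l\}$ (Remark~\ref{rem:case_k,l=0}), where one side is a single empty word giving $\mathds{1}$, should be recorded separately but is immediate since then $r=1$ and $T_1=\{\epsilon\}$.
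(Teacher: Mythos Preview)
Your plan has a genuine gap at the point where you write ``applying relation~(i) a second time with roles of upper/lower swapped.'' The obstacle is structural: relation~(i) in Definition~\ref{defn:quantum_group_relations_using_T_i's} always equates an expression of the shape $\sum_{t\in T_i}u_{t,\gamma}^{\omega}$ (summation over the \emph{row} index, column index fixed) with an expression of the shape $\sum_{t'\in T'_j}u_{\gamma',t'}^{\omega'}$ (summation over the \emph{column} index, row index fixed). The target right-hand side $\sum_{t'\in T'_i}u_{t'_11}^{\omega'_1}\cdots u_{t'_l1}^{\omega'_l}$ is again a sum over the \emph{row} index with column fixed at~$1$, but now with the colours $\omega'$ and the classes $T'_\bullet$; this is of neither shape occurring in relation~(i) for~$p$. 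Hence no number of applications of~(i) for $p$ alone can produce it, and the ``transitivity'' you hope for does not close. Swapping upper and lower amounts to using $R^{Gr}_{p^*}$, but even combining (i) for $p$ and for $p^*$ you end up with two different ``column-sum, row-fixed'' middle expressions that still cannot be identified.

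What is missing is precisely the hypothesis you never used: $G$ is an \emph{easy} quantum group. By Lemma~\ref{lem:relations_on_generators_imply_relations_for_category} this forces the relations $R^{Gr}_{pp^*}(u_G)$ to hold as well. Since $pp^*$ has the lower-row decomposition $T'_0\dot\cup\ldots\dot\cup T'_r$ on \emph{both} rows, relation~(i) for $pp^*$ with $\gamma=(1,\ldots,1)\in T'_{j_*}$ and $\gamma'=\beta'\in T'_i$ reads
\[
\sum_{t'\in T'_i}u_{t'_11}^{\omega'_1}\cdots u_{t'_l1}^{\omega'_l}
=\sum_{t'\in T'_{j_*}}u_{\beta'_1t'_1}^{\omega'_1}\cdots u_{\beta'_lt'_l}^{\omega'_l},
\]
and the right-hand side here is exactly what relation~(i) for $p$ (with $\gamma=(1,\ldots,1)$, $\gamma'=\beta'$) identifies with $\sum_{t\in T_i}u_{t,1}^{\omega}$. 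That two-step chain, via $R^{Gr}_p$ and $R^{Gr}_{pp^*}$, is the paper's proof. Your identification of $(1,\ldots,1)\in T_{j_*}$ and $(1,\ldots,1)\in T'_{j_*}$ is correct and is indeed used, but the bridge you are looking for is $pp^*$, not a second application of~$p$.
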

\begin{proof}
By Lemma \ref{lem:relations_on_generators_imply_relations_for_category} also the relations \(R_{p^*}^{Gr}(u_G)\) and \(R_{pp^*}^{Gr}(u_G)\) are fulfilled.
Additionally, the decomposition \(T'_0\dot{\cup}\ldots\dot{\cup}T'_r\) of \([N]^l\) for the lower row of \(p\) coincides with the decomposition of \([N]^l\) for both the upper and lower row of \(pp^*\).
Assume that \(\displaystyle(\underbrace{1,\ldots,1}_{k\textnormal{ entries}})\!\in\!T_1\vspace{6pt}\) and \(\displaystyle(\underbrace{1,\ldots,1}_{l\textnormal{ entries}})\!\in\!T'_1\) holds.
The relations \(R^{Gr}_{pp^*}(u_G)\) in particular say (see  Definition \ref{defn:quantum_group_relations_using_T_i's}) that
\[\sum_{t'\in T'_i}u_{t'_11}^{\omega'_1}\dots u_{t'_l1}^{\omega'_l}
=\sum_{t'\in T'_1}u_{\beta'_1t'_1}^{\omega'_1}\dots u_{\beta'_lt'_l}^{\omega'_l}\]
for every \(1\!\le\!i\le\!r\) and \(\beta'\!\in\!T'_i\).
Using this, for any \(\beta'\!\in\!T'_i\) we have
\[
\sum_{t\in T_i}u_{t_11}^{\omega_1}\dots u_{t_k1}^{\omega_k}\;
\overset{R^{Gr}_p(u_G)}{=}\;\sum_{t'\in T'_1}u_{\beta'_1t'_1}^{\omega'_1}\dots u_{\beta'_lt'_l}^{\omega'_l}\\
\;\overset{R^{Gr}_{pp^*}(u_G)}{=}\;\sum_{t'\in T'_i}u_{t'_11}^{\omega'_1}\dots u_{t'_l1}^{\omega'_l}.\vspace{11pt}
\]
Note that this argument is valid even if \(k\!=\!0\) or \(l\!=\!0\) as we then have \(\epsilon\!\in\!T_1\) or \(\epsilon\!\in\!T'_1\), respectively and the corresponding side in Equation \ref{eqn:partition_induced_relations_for_the_u_i1_vector_case} is \(\mathds{1}\) (see Remark \ref{rem:case_k,l=0}).
\end{proof}

We are now ready to formulate the definitions leading to partition quantum spaces. See also the appendix for an overview on all relations related to partitions.

\begin{defn}\label{defn:quantum_space_relations_for_the_x_i}
Let \(N\!\in\!\N\) and \(x\!:=\!(x_{1},\ldots,x_N)^T\) be a vector of generators. Let \(p\!\in\!\mathcal{P}(\omega,\omega')\subseteq \mathcal{P}(k,l)\) be a partition.
Using Notation \ref{not:partitioning_of_labelings}, we associate with \(p\) the following relations on \(x\):
\[R^{Sp}_p(x):\quad\sum_{t\in T_i}x_{t_1}^{\omega_1}\cdots x_{t_k}^{\omega_k}
=\sum_{t'\in T'_i}x_{t'_1}^{\omega'_1}\cdots x_{t'_l}^{\omega'_l}\quad\quad\forall\; 1\!\le\!i\le\!r.\]
Again note that for \(k\!=\!0\) or \(l\!=\!0\) the corresponding side of the equation above is equal to \(\mathds{1}\) (see Remark \ref{rem:case_k,l=0}).
\end{defn}
Consider for example the partition {\paarpartwb} (i.e. \(r\!=\!1\)). The relations \(R^{Sp}_{\!\scalebox{0.7}{\paarpartwb}\!}(x)\) read as
\begin{equation}\label{eqn:quantum_space_relations_for_paarpartwb}
R^{Sp}_{\!\scalebox{0.7}{\paarpartwb}\!}(x):\quad\mathds{1}=\sum_{i=1}^{N}x_{i}x_{i}^*
\end{equation}

\begin{defn}\label{defn:PQS_vector_case}
Let \(N\!\in\!\N\) and \(\Pi\supseteq\{\paarpartwb,\paarpartbw,\baarpartwb,\baarpartbw\}\) be a set of partitions.
Then we define the universal \(C^*\)-algebra 
\[C\big(X_N(\Pi)\big):=C^*(x_{1},\ldots,x_{N}\;|\;\forall p\!\in\!\Pi:\textnormal{ the relations } R^{Sp}_p(x)\textnormal{ hold})\]
and call it the \emph{non-commutative functions} on the  \emph{partition quantum space (PQS) \(X_N(\Pi)\) of one vector}.
\end{defn}
\begin{rem}\label{rem:mcpp_guarantees_existence_of_PQS}
As seen above (Equation \ref{eqn:quantum_space_relations_for_paarpartwb}), the relations \(R^{Sp}_{\!\scalebox{0.7}{\paarpartwb}\!}(x)\) guarantee \(0\!\le x_ix_i^*\!\le\!1\), so the universal \(C^*\)-algebra \(C\big(X_N(\Pi)\big)\) exists.
\end{rem}

\begin{rem}
Note that for a given partition \(p\) we already have another kind of relations: \(R^{Gr}_p(u_G)\), associated to a matrix \(u_G\), see Definition \ref{defn:quantum_group_relations_using_T_i's}. We adress these two definitions as the \emph{quantum group relations} \(R^{Gr}_p(u_G)\) (for the \(u_{ij}\)) and the \emph{quantum space relations} \(R^{Sp}_p(x)\) (for the \(x_{i}\)), respectively.
\end{rem}

As we have seen so far, there are strong similarities between PQSs and easy quantum groups. This is not surprising, as the quantum space relations \(R^{Sp}_p(x)\) are motivated by the quantum group relations \(R^{Gr}_p(u_G)\).
The following theorem makes this observation precise:

\begin{thm}\label{thm:hom_from_PQS_into_BSQG_vector_case}
Let \(N\!\in\!\N\) and \(\Pi\supseteq\{\paarpartwb,\paarpartbw,\baarpartwb,\baarpartbw\}\) be a set of partitions. Let \(X_N(\Pi)\) be the corresponding PQS with vector of generators \(x\!=\!(x_i)\) and \(G_N(\Pi)\) the corresponding easy quantum group with matrix of generators \(u_{G_N(\Pi)}\!=\!(u_{ij})\). Then the mapping
\[\phi:x_i\mapsto u_{i1}\quad\quad,\quad\quad1\!\le\!i\!\le\!N\]
defines a unital \(^*\)-homomorphism from \(C\big(X_N(\Pi)\big)\) to \(C\big(G_N(\Pi)\big)\).
\end{thm}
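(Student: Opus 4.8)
The plan is to use the universal property of the \(C^*\)-algebra \(C\big(X_N(\Pi)\big)\). By Definition \ref{defn:PQS_vector_case} it is the universal unital \(C^*\)-algebra generated by \(x_1,\dots,x_N\) subject to the relations \(R^{Sp}_p(x)\) for all \(p\!\in\!\Pi\); hence any \(N\)-tuple of elements \(y_1,\dots,y_N\) in a unital \(C^*\)-algebra \(\mathcal B\) satisfying all these relations induces a unique unital \(^*\)-homomorphism \(C\big(X_N(\Pi)\big)\to\mathcal B\) with \(x_i\mapsto y_i\). So the whole task reduces to checking that the entries \(u_{11},\dots,u_{N1}\) of the first column of \(u_{G_N(\Pi)}\) satisfy the quantum space relations \(R^{Sp}_p\) for every \(p\!\in\!\Pi\).

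To carry this out, I would first note that, by Definition \ref{defn:BSQG}, the matrix \(u_{G_N(\Pi)}\) satisfies the quantum group relations \(R^{Gr}_p\big(u_{G_N(\Pi)}\big)\) for every \(p\!\in\!\Pi\), and that \(\Pi\) contains the four mixed-coloured pair partitions, so Lemma \ref{lem:first_column-relations} applies to each such \(p\). Applying it to \(p\!\in\!\mathcal P(\omega,\omega')\cap\Pi\) yields, for every \(1\!\le\!i\!\le\!r\),
\[
\sum_{t\in T_i}u_{t_11}^{\omega_1}\cdots u_{t_k1}^{\omega_k}=\sum_{t'\in T'_i}u_{t'_11}^{\omega'_1}\cdots u_{t'_l1}^{\omega'_l},
\]
which is precisely the relation \(R^{Sp}_p\) of Definition \ref{defn:quantum_space_relations_for_the_x_i} evaluated on the tuple \((u_{11},\dots,u_{N1})\) (including the degenerate cases \(k=0\) or \(l=0\), where the corresponding side is \(\mathds 1\), as already handled in the proof of that lemma). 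Thus all defining relations of \(C\big(X_N(\Pi)\big)\) hold for the first column, and the universal property delivers the unital \(^*\)-homomorphism \(\phi\) with \(\phi(x_i)=u_{i1}\).

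I do not expect a genuine obstacle at this stage: the single delicate point — that relations confined to one column really do follow from the full matrix relations, which requires also invoking \(R^{Gr}_{p^*}\) and \(R^{Gr}_{pp^*}\) via Lemma \ref{lem:relations_on_generators_imply_relations_for_category} — has already been isolated and resolved in Lemma \ref{lem:first_column-relations}. Given that lemma, the present theorem is a formal consequence of the universal property, and the only care needed is the routine bookkeeping of the decomposition \([N]^k=T_0\,\dot{\cup}\,\cdots\,\dot{\cup}\,T_r\) when matching \(R^{Sp}_p\) against the conclusion of the lemma.
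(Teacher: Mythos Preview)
Your proposal is correct and follows exactly the paper's approach: the paper's proof is the single sentence ``This follows directly from Lemma \ref{lem:first_column-relations},'' and you have simply spelled out the universal-property argument that this sentence abbreviates.
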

\begin{proof}
This follows directly from Lemma \ref{lem:first_column-relations}.
\end{proof}

\subsection{The case of \(d\) vectors}\label{subsec:PQS_d-vector_case}

In this section we generalize quantum vectors to \(d\)-tupels of quantum vectors.
We will see already in the commutative case that it is necessary to perform this step. Roughly speaking, we can start with an easy quantum group \(G\) and define a canonical space it acts on, arising from the first column of its fundamental unitary \(u_{G}\), but in general we cannot recover this quantum group as the quantum symmetry group of the constructed space. We then need to consider a tuple of columns.
\begin{ex}\label{ex:S'_N_2}
The easy quantum groups \(H_N\) and \(S_N'\) are both classical groups and arise from \(S_N\) in the following way:
Starting with \(S_N\), we can put a global factor \(\pm 1\) in front of a permutation matrix and end up with the group \(S'_N\).
For matrices in \(H_N\) this additional factor \(\pm1\) is an entrywise choice, so \(H_N\) consists of all matrices with one entry \(\pm1\) in every row and column (and the rest vanishing).
See also \cite{banicaspeicherliberation} for the definitions of \(S'_N\) and \(H_N\).
We clearly have \(S'_N\subseteq H_N\) and inequality for \(N\!>\!1\).

Consider the space of first columns of matrices in \(S'_N\), i.e. the vector set \(X\!=\!\{\pm e_1,\ldots,\pm e_N\}\).
Note, that the first column space of \(H_N\) is \(X\), too.
Taking \(X\) and asking for its symmetry group we do not end up with \(S'_N\) but with \(H_N\).

To overcome this problem, we consider the first \emph{two} columns of \(S'_N\), which yields the set of vector pairs \(X':=\{(\pm(e_i,e_j)\;|\;1\!\le\!i,j\!\le\!N\}\).
We see that \(S'_N\) acts on \(X'\) by entrywise matrix-vector multiplication, but \(H_N\) does not,  as for example \((e_1,-e_2)\) is in the image of \(H_N\) acting on \(X'\) but \((e_1,-e_2)\!\notin\!X'\).
In fact: We may recover the group \(S'_N\) as the (quantum) symmetry group of \(X'\), so the space arising from two columns contains enough informations to recover \(S_N'\).
\end{ex} 
As a first step, we extract analogous to Lemma \ref{lem:first_column-relations} for a given CMQG relations for the entries \(u_{ij}\) of \(u_G\), which are more suitable for our purposes.
\begin{defn}\label{defn:first_d-many_columns-relations}
Let \(p\!\in\!\mathcal{P}(\omega,\omega')\subseteq \mathcal{P}(k,l)\) be  a partition and \(G\)  a CMQG with matrix of generators \(u_G\). Using Notation \ref{not:partitioning_of_labelings} we associate with the partition \(p\) the following relations, denoted by \(R^ {Sp}_p(u_G)\):
\begin{itemize}
\item[(i)]\(\displaystyle\sum_{t\in T_i}u_{t_1\gamma_1}^{\omega_1}\dots u_{t_k\gamma_k}^{\omega_k}=\sum_{t'\in T'_i}u_{t'_1\gamma'_1}^{\omega'_1}\dots u_{t'_l\gamma'_l}^{\omega'_l}\quad,\quad1\!\le\!i,j\!\le\!r,\gamma\!\in\!T_j\textnormal { and }\gamma'\!\in\!T'_j\).
\item[(ii)]\(\displaystyle\sum_{t\in T_i}u_{t_1\gamma_1}^{\omega_1}\dots u_{t_k\gamma_k}^{\omega_k}=0\quad,\quad1\!\le\!i\!\le\!r\textnormal { and }\gamma\!\in\!T_0\).
\item[(iii)]\(\displaystyle\sum_{t'\in T'_i}u_{t'_1\gamma'_1}^{\omega'_1}\dots u_{t'_l\gamma'_l}^{\omega'_l}=0\quad,\quad1\!\le\!i\!\le\!r\textnormal { and }\gamma'\!\in\!T'_0\).
\end{itemize}
\end{defn}
\begin{rem}\label{rem:quantum_space-relations:sums_only_depend_on_T_j_case_u}
Note, that if (i) is fulfilled, then  the left side of equation (i) does not depend on our choice of \(\gamma\!\in\!T_j\) and likewise the right side does not depend on \(\gamma'\!\in\!T'_j\):
\[\sum_{t\in T_i}u_{t_1\gamma_1}^{\omega_1}\dots u_{t_k\gamma_k}^{\omega_k}
=\sum_{t\in T_i}u_{t_1\tilde{\gamma}_1}^{\omega_1}\dots u_{t_k\tilde{\gamma}_k}^{\omega_k}\quad,\quad\forall\gamma,\tilde{\gamma}\!\in\!T_j\]
and
\[\sum_{t'\in T'_i}u_{t'_1\gamma'_1}^{\omega'_1}\dots u_{t'_l\gamma'_l}^{\omega'_l}
=\sum_{t'\in T'_i}u_{t'_1\tilde{\gamma}'_1}^{\omega'_1}\dots u_{t'_l\tilde{\gamma}'_l}^{\omega'_l}\quad,\quad\forall\gamma',\tilde{\gamma}'\!\in\!T'_j.\]
\end{rem}

Recall that in the appendix, all relevant relations are summarized on one page, for the readers convenience.
\begin{lem}\label{lem:quantum_group_relations_imply_quantum_space_relations}
Let \(G\) be a CMQG with matrix of generators \(u_G=\big(u_{ij}\big)_{1\le i,j\le N}\) and \(p\!\in\!P(\omega,\omega')\subseteq P(k,l)\) be a partition. Then it holds
\begin{itemize}
\item [(1)]\(R_p^{Gr}(u_G),\;R_{pp^*}^{Gr}(u_G),\;R_{p^*}^{Gr}(u_G)\quad\Rightarrow\quad R_p^{Sp}(u_G)\),
\item [(2)]\(R_p^{Sp}(u_G),\;R_p^{Sp}(u_G^T)\quad\Rightarrow\quad R_p^{Gr}(u_G)\).
\end{itemize} 
\end{lem}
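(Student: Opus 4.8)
The plan is to treat the two implications separately. In each one I would split the relations from the definitions into the three types (i), (ii), (iii); the ``vanishing'' relations (ii), (iii) turn out to be essentially tautological once the decompositions of the labelings are identified, so all the work sits in the ``through-block gluing'' relation (i).

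\textbf{Proof of (1).} Relation (ii) of $R^{Sp}_p(u_G)$ is literally relation (ii) of $R^{Gr}_p(u_G)$, so nothing is to prove there. Relation (iii) of $R^{Sp}_p(u_G)$ I would get from $R^{Gr}_{p^*}(u_G)$: the upper row of $p^*$ is a relabelling of the lower row of $p$, so the decomposition $[N]^l=T'_0\,\dot{\cup}\,T'_1\,\dot{\cup}\,\ldots\,\dot{\cup}\,T'_r$ is also the one attached to the upper row of $p^*$, and under this identification relation (ii) of $R^{Gr}_{p^*}(u_G)$ becomes relation (iii) of $R^{Sp}_p(u_G)$. For relation (i) I would run the argument of Lemma~\ref{lem:first_column-relations} without specialising to the first column: fix $1\!\le\!i,j\!\le\!r$, $\gamma\!\in\!T_j$ and $\gamma'\!\in\!T'_j$; using $R^{Gr}_p(u_G)$, rewrite $\sum_{t\in T_i}u^{\omega_1}_{t_1\gamma_1}\cdots u^{\omega_k}_{t_k\gamma_k}$ as $\sum_{t'\in T'_j}u^{\omega'_1}_{\delta_1 t'_1}\cdots u^{\omega'_l}_{\delta_l t'_l}$ for any $\delta\!\in\!T'_i$; then, using relation (i) of $R^{Gr}_{pp^*}(u_G)$ — where, exactly as in Lemma~\ref{lem:first_column-relations}, both rows of $pp^*$ carry the decomposition $T'_0\,\dot{\cup}\,\ldots\,\dot{\cup}\,T'_r$ — rewrite this in turn as $\sum_{t'\in T'_i}u^{\omega'_1}_{t'_1\gamma'_1}\cdots u^{\omega'_l}_{t'_l\gamma'_l}$, which is the right-hand side of relation (i) of $R^{Sp}_p(u_G)$. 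The cases $k=0$ or $l=0$ go through as in Lemma~\ref{lem:first_column-relations}, the relevant side being $\mathds{1}$.

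\textbf{Proof of (2).} Relation (ii) of $R^{Gr}_p(u_G)$ is literally relation (ii) of $R^{Sp}_p(u_G)$, and relation (iii) of $R^{Gr}_p(u_G)$ is precisely relation (iii) of $R^{Sp}_p(u_G^T)$ after the substitution $u_{ab}\mapsto u_{ba}$. So only relation (i) of $R^{Gr}_p(u_G)$ needs an argument, and for this I would use a double-counting trick. By relation (i) of $R^{Sp}_p(u_G)$ together with Remark~\ref{rem:quantum_space-relations:sums_only_depend_on_T_j_case_u}, the element $f(i,j):=\sum_{t\in T_i}u^{\omega_1}_{t_1\gamma_1}\cdots u^{\omega_k}_{t_k\gamma_k}$ does not depend on the choice of $\gamma\!\in\!T_j$; applying the same remark to $R^{Sp}_p(u_G^T)$, the element $g(i,j):=\sum_{t\in T_i}u^{\omega_1}_{\gamma_1 t_1}\cdots u^{\omega_k}_{\gamma_k t_k}$ does not depend on the choice of $\gamma\!\in\!T_j$. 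Summing $u^{\omega_1}_{t_1\gamma_1}\cdots u^{\omega_k}_{t_k\gamma_k}$ over $(t,\gamma)\!\in\!T_i\!\times\!T_j$ in the two orders of summation gives $|T_j|\,f(i,j)=|T_i|\,g(j,i)$, hence $f(i,j)=g(j,i)$ because $|T_i|=|T_j|$ (Remark~\ref{rem:case_k,l=0}). Finally, the left-hand side of relation (i) of $R^{Gr}_p(u_G)$ is $f(i,j)$ (for $\gamma\!\in\!T_j$), while its right-hand side $\sum_{t'\in T'_j}u^{\omega'_1}_{\gamma'_1 t'_1}\cdots u^{\omega'_l}_{\gamma'_l t'_l}$ (for $\gamma'\!\in\!T'_i$) equals, by relation (i) of $R^{Sp}_p(u_G^T)$ read from right to left, the element $\sum_{t\in T_j}u^{\omega_1}_{\gamma_1 t_1}\cdots u^{\omega_k}_{\gamma_k t_k}=g(j,i)$; so the two sides coincide. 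Once more, $k=0$ or $l=0$ reduce to a direct comparison since the through-block sums are then trivial.

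The step I expect to be the crux in both parts is the ``transposition'' hidden in relation (i): passing from a sum in which the running index occupies the first (row) slot of the $u$'s to one in which it occupies the second (column) slot. In (1) this is handed to us by the hypothesis $R^{Gr}_{pp^*}(u_G)$, exactly as in Lemma~\ref{lem:first_column-relations}; in (2) it has to be generated from scratch, and the double-counting identity $|T_j|\,f(i,j)=|T_i|\,g(j,i)$ — available precisely because $R^{Sp}_p$ is imposed on both $u_G$ and $u_G^T$ — is the one genuinely new ingredient. A subsidiary point to check carefully in (1) is that $pp^*$ really does inherit the decomposition $T'_0\,\dot{\cup}\,\ldots\,\dot{\cup}\,T'_r$ on both rows, but this is the statement already used in the proof of Lemma~\ref{lem:first_column-relations}.
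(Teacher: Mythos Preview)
Your proof is correct and follows the same strategy as the paper. Part (1) is verbatim the paper's argument (the generalization of Lemma~\ref{lem:first_column-relations} from the column $(1,\ldots,1)$ to arbitrary $\gamma\in T_j$, $\gamma'\in T'_j$, using $R^{Gr}_{pp^*}$ for the second step). For part (2)(i), the paper also uses the double-counting/averaging identity combined with $|T_i|=|T_j|$; the only cosmetic difference is the order of the two steps --- the paper first applies relation (i) of $R^{Sp}_p(u_G)$ to pass to the lower row and then runs the averaging over $T'_i\times T'_j$, whereas you first run the averaging over $T_i\times T_j$ on the upper row and then apply relation (i) of $R^{Sp}_p(u_G^T)$ to pass to the lower row. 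Both routes hinge on exactly the same observation, so this is not a genuinely different argument.
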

\begin{proof}
For (1) assume that the quantum group relations of \(p\), \(pp^*\) and \(p^*\) are fulfilled.
Equation (i) in Definition \ref{defn:first_d-many_columns-relations} is proved with the same arguments as in Lemma \ref{lem:first_column-relations}, we just replace the multi indices \((1,\ldots,1)\) by \(\gamma\!\in\!T_j\) and \(\gamma'\!\in\!T'_j\), respectively: For any \(\beta'\!\in\!T'_i\) it holds
\[
\sum_{t\in T_i}u_{t_1\gamma_1}^{\omega_1}\dots u_{t_k\gamma_k}^{\omega_k}
\;\overset{R^{Gr}_p(u_{G})}{=}\;\sum_{t'\in T'_j}u_{\beta'_1t'_1}^{\omega'_1}\dots u_{\beta'_lt'_l}^{\omega'_l}\\
\;\overset{R^{Gr}_{pp^*}(u_{G})}{=}\;\sum_{t'\in T'_i}u_{t'_1\gamma'_1}^{\omega'_1}\dots u_{t'_l\gamma'_l}^{\omega'_l}\\
.\vspace{11pt}
\]
Equations (ii) and (iii) in Definition \ref{defn:first_d-many_columns-relations} follow directly from the relations \(R_p^{Gr}(u_G)\) and \(R_{p^*}^{Gr}(u_G)\), see Definition \ref{defn:quantum_group_relations_using_T_i's}.\newline
For (2) assume that \(R_p^{Sp}(u_G)\) and \(R_p^{Sp}(u_G^T)\) are fulfilled. We have to prove the quantum group relations of the partition \(p\). Note that \(R_p^{Sp}(u_G)\) and \(R_p^{Sp}(u_G^T)\) already include the quantum group relations (ii) and (iii) in Definition \ref{defn:quantum_group_relations_using_T_i's}, so there is only (i) left to prove. As \(R_p^{Sp}(u_G)\) holds, we know for every \(1\!\le\!i,j\!\le\!r\), \(\gamma\!\in\!T_j\) and \(n'\!\in\!T'_j\) that
\[\sum_{t\in T_i}u_{t_1\gamma_1}^{\omega_1}\cdots u_{t_k\gamma_k}^{\omega_k}
=\sum_{m'\in T'_i}u_{m'_1n'_1}^{\omega'_1}\cdots u_{m'_ln'_l}^{\omega'_l}.\]
Out of this we can straightforwardly deduce the desired relation, as for any \(\gamma'\!\in\!T'_i\) the right side can now be written as
\begin{align*}
\begin{split}\label{eqn:thm_X_is_BSQS_if_d=N_computation:for_relation_(i)}
\sum_{m'\in T'_i}u_{m'_1n'_1}^{\omega'_1}\cdots u_{m'_ln'_l}^{\omega'_l}
&\overset{(*)}{=}\frac{1}{|T'_j|}\sum_{t'\in T'_j}\sum_{m'\in T'_i}u_{m'_1t'_1}^{\omega'_1}\cdots u_{m'_lt'_l}^{\omega'_l}\\
&\overset{(*)}{=}\frac{|T'_i|}{|T'_j|}\sum_{t'\in T'_j}u_{\gamma'_1t'_1}^{\omega'_1}\cdots u_{\gamma'_lt'_l}^{\omega'_l}\\
&=\sum_{t'\in T'_j}u_{\gamma'_1t'_1}^{\omega'_1}\cdots u_{\gamma'_lt'_l}^{\omega'_l}\;,
\end{split}
\end{align*}
where we used Remark \ref{rem:quantum_space-relations:sums_only_depend_on_T_j_case_u} in (\(*\)) -- once for \(u_G\) and once for \(u_G^T\) -- and \(|T'_i|=|T'_j|\), see Remark \ref{rem:case_k,l=0} .
\end{proof}

\begin{rem}\label{rem:quantum_space_relations_always_fulfilled_for_BSQGs}
Note that by Lemma \ref{lem:relations_on_generators_imply_relations_for_category} the assumptions of implication (1) in Lemma \ref{lem:quantum_group_relations_imply_quantum_space_relations} are always fulfilled if we consider an easy quantum group \(G_N(\Pi)\) with \(p\!\in\!\langle\Pi\rangle\). As these relations also hold for \(u_{G_N(\Pi)}^T\) we alltogether have
\[R_p^{Gr}(u_{G_N(\Pi)})\Leftrightarrow R_p^{Sp}(u_{G_N(\Pi)}),R_p^{Sp}(u_{G_N(\Pi)}^T).\]
\end{rem}

We use the deduced equations to define relations similar to Definition \ref{defn:quantum_space_relations_for_the_x_i}, but now in the case where \(x\) is a \(d\)-tupel of vectors.

\begin{defn}\label{defn:quantum_space_relations_for_the_x_ij}
Let \(d,N\!\in\!\N\) with \(d\!\le\!N\) and 
\[x\!:=\!(x_{ij})\!:=\!\left(\begin{pmatrix}x_{11}\\ \vdots\\ x_{N1}\end{pmatrix},\ldots,\begin{pmatrix}x_{1d}\\ \vdots\\ x_{Nd}\end{pmatrix}\right)\]
a tupel of vectors of generators \(x_{ij}\).
Let \(p\!\in\!\mathcal{P}(\omega,\omega')\!\subseteq\!\mathcal{P}(k,l)\) be a partition. Using Notation \ref{not:partitioning_of_labelings}, we associate with \(p\) the following relations, denoted by \(R^{Sp}_p(x)\):
\begin{itemize}
\item[(i)]\(\displaystyle\sum_{t\in T_i}x_{t_1\gamma_1}^{\omega_1}\dots x_{t_k\gamma_k}^{\omega_k}=\sum_{t'\in T'_i}x_{t'_1\gamma'_1}^{\omega'_1}\dots x_{t'_l\gamma'_l}^{\omega'_l}\quad,\quad 1\!\le\!i,j\!\le\!r,\; \gamma\!\in\!T_j\!\cap\![d]^k,\;\gamma'\!\in\!T'_j\!\cap\![d]^l\).\vspace{4pt}
\item[(ii)]\(\displaystyle\sum_{t\in T_i}x_{t_1\gamma_1}^{\omega_1}\dots x_{t_k\gamma_k}^{\omega_k}=0\quad,\quad 1\!\le\!i\!\le\!r,\;\gamma\!\in\!T_0\!\cap\![d]^k\).\vspace{4pt}
\item[(iii)]\(\displaystyle\sum_{t'\in T'_i}x_{t'_1\gamma'_1}^{\omega'_1}\dots x_{t'_l\gamma'_l}^{\omega'_l}=0\quad,\quad 1\!\le\!i\!\le\!r,\;\gamma'\!\in\!T'_0\!\cap\![d]^l\).\vspace{8pt}
\end{itemize}
\end{defn}

\begin{rem}
We denoted with \(R_p^{Sp}(\cdot)\) three types of relations: First, note that Definition \ref{defn:quantum_space_relations_for_the_x_i} is a special case of Definition \ref{defn:quantum_space_relations_for_the_x_ij}.
Second, consider the relations \(R_p^{Sp}(u_G)\)  from Definition \ref{defn:first_d-many_columns-relations} and \(R_p^{Sp}(x)\) from Definition \ref{defn:quantum_space_relations_for_the_x_ij}. We can see both situations as special cases of one definition where the argument of \(R_p^{Sp}(\cdot)\) is a collection of indexed symbols \((y_{ij})\) with \(i\!\times\!j\!\in\![N]\times[d]\) and the resulting relations in particular depend on the given \(d\). See also the appendix for an overview on these relations.
\end{rem}
\begin{rem}\label{rem:quantum_space-relations:sums_only_depend_on_T_j_case_x}
If the corresponding relations are fulfilled, we have again (compare Remark \ref{rem:quantum_space-relations:sums_only_depend_on_T_j_case_u}) that the sums appearing in Definition \ref{defn:quantum_space_relations_for_the_x_ij} do not depend on the chosen \(\gamma\!\in\!T_j\cap[d]^k\) and  \(\gamma'\!\in\!T'_j\cap[d]^l\), respectively:
\[\sum_{t\in T_i}x_{t_1\gamma_1}^{\omega_1}\dots x_{t_k\gamma_k}^{\omega_k}
=\sum_{t\in T_i}x_{t_1\tilde{\gamma}_1}^{\omega_1}\dots x_{t_k\tilde{\gamma}_k}^{\omega_k}\quad,\quad\forall \gamma,\tilde{\gamma}\!\in\!T_j\cap[d]^k.\]
and
\[\sum_{t'\in T'_i}x_{t'_1\gamma'_1}^{\omega'1}\dots x_{t'_l\gamma'_l}^{\omega'_l}
=\sum_{t'\in T'_i}x_{t'_1\tilde{\gamma}'_1}^{\omega'_1}\dots x_{t'_l\tilde{\gamma}'_l}^{\omega'_l}\quad,\quad\forall \gamma',\tilde{\gamma}'\!\in\!T'_j\cap[d]^l\]
\end{rem}
\begin{ex}
Consider any \(d\)-tupel \(x\) in the sense above and again the partition
\[p=
\setlength{\unitlength}{0.3cm}
\begin{picture}(5,2.4)
\put(0,-1.8) {$\bullet$}
 \put(1,-1.8) {$\bullet$}
 \put(2,-1.8) {$\circ$}
 \put(0,1.6) {$\circ$}
 \put(1,1.6) {$\circ$}
 \put(2,1.6) {$\bullet$}
\put(-0.11,2.6){\partii{1}{0}{1}}
\put(-0.12,2.6){\parti{2}{2}}
\put(-0.12,-1){\uppartii{1}{1}{2}}
\put(-0.12,-1){\upparti{1}{0}}
\end{picture}\hspace{-0.5cm}
.\vspace{11pt}\]
Recall the decomposition of labelings induced by \(p\) into sets \(T_i\) and \(T'_i\) as in Example \ref{ex:decomposition_of_labelings}.
In virtue of the cases (i)-(iii) in Definition \ref{defn:quantum_space_relations_for_the_x_ij} the relations \(R^{Sp}_p(x)\) read as:
\begin{itemize}
\item[(i)]
\(\displaystyle\left(\sum_{t=1}^{N}x_{tj_1}x_{tj_1}\right)x_{ij_2}^*
=\left(\sum_{t'=1}^{N}x_{t'j_3}^*\right)x_{ij_2}^*x_{ij_2}\quad\quad\forall i\in[N], j_1,j_2,j_3\in[d]\vspace{6pt}\)
\item[(ii)]
\(\displaystyle \left(\sum_{t=1}^{N}x_{tj_1}x_{tj_2}\right)x_{ij_3}^*
=0\quad\quad\forall i\in[N], j_1,j_2,j_3\in[d], j_1\!\neq\!j_2\vspace{6pt}\)
\item[(iii)]\(\displaystyle \left(\sum_{t'=1}^{N}x_{t'j_1}^*\right)x_{ij_2}^*x_{ij_3}\!=\!0\quad\quad\forall i\in[N], j_1,j_2,j_3\in[d], j_2\!\neq\!j_3\)

\end{itemize}
\end{ex}
\begin{ex}
To see how the number \(d\) of vectors effects the relations \(R_p^{Sp}(x)\), consider the partition \(p\!=\!\paarpartwb\). Whilst for \(d\!=\!1\) it only holds 
\[\sum_{t'=1}^{N}x_{t'1}x_{t'1}^*\!=\!\mathds{1}\]
we have for \(d\!\ge\!2\)
\[\sum_{t'=1}^{N}x_{t'j_1}x_{t'j_2}^*\!=\!\delta_{j_1j_2}\quad\quad,\quad\quad\forall j_1,j_2\in [d].\]
\end{ex}
\begin{defn}\label{defn:PQS_matrix_case}
Let \(d,N\!\in\!\N\) with \(d\!\le\!N\) and \(x\!:=\!(x_{ij})\) be a \(d\)-tupel of vectors of gene\-rators. Let \(\Pi\supseteq\{\paarpartwb,\paarpartbw,\baarpartwb,\baarpartbw\}\) be a set of partitions. Then we define the universal \(C^*\)-algebra
\[C\big(X_{N,d}(\Pi)\big):=C^*(x_{11},\ldots,x_{Nd}\;|\;\forall p\!\in\!\Pi:\textnormal{ the relations } R^{Sp}_p(x)\textnormal{ hold})\]
and call it the \emph{non-commutative functions} on the \emph{partition quantum space (PQS) \(X_{N,d}(\Pi)\) of \(d\) vectors}.
\end{defn}

\begin{thm}\label{thm:hom_from_PQS_into_BSQG_d-vector_case}
Let \(d,N\!\in\N\), \(d\!\le\!N\) and \(\Pi\supseteq\{\paarpartwb,\paarpartbw,\baarpartwb,\baarpartbw\}\) be a set of partitions. Then the mapping 
\[\phi:x_{ij}\mapsto u_{ij}\quad,\quad1\!\le\!i\!\le\!N,\;1\!\le\!j\!\le\!d\]
defines a \(^*\)-homomorphism from \(C\big(X_{N,d}(\Pi)\big)\) to \(C\big(G_N(\Pi)\big)\).
\end{thm}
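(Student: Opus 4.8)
The plan is to invoke the universal property of \(C\big(X_{N,d}(\Pi)\big)\). By Definition \ref{defn:PQS_matrix_case} this \(C^*\)-algebra is universal among unital \(C^*\)-algebras generated by elements \(x_{ij}\) (\(1\!\le\!i\!\le\!N\), \(1\!\le\!j\!\le\!d\)) subject to the relations \(R^{Sp}_p(x)\) for all \(p\!\in\!\Pi\). Hence, to obtain \(\phi\) it suffices to exhibit inside \(C\big(G_N(\Pi)\big)\) elements satisfying exactly these relations, namely the entries \(u_{ij}\) with \(1\!\le\!i\!\le\!N\), \(1\!\le\!j\!\le\!d\) of the first \(d\) columns of \(u_{G_N(\Pi)}\); then \(\phi\) is the \(^*\)-homomorphism supplied by universality. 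It is unital because \(\{\paarpartwb\}\!\subseteq\!\Pi\) forces \(\mathds{1}=\sum_{i}x_{i1}x_{i1}^*\) in the source and \(\sum_i u_{i1}u_{i1}^*=\mathds{1}\) in the target (unitarity of \(u_{G_N(\Pi)}\)).

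So the only thing to check is that the first \(d\) columns of \(u_{G_N(\Pi)}\) satisfy \(R^{Sp}_p(x)\) for every \(p\!\in\!\Pi\). First I would apply Lemma \ref{lem:relations_on_generators_imply_relations_for_category}: since \(\Pi\) contains the four mixed-coloured pair partitions and \(R^{Gr}_p(u_{G_N(\Pi)})\) holds for all \(p\!\in\!\Pi\), the relations \(R^{Gr}_q(u_{G_N(\Pi)})\) hold for every \(q\) in the generated category \(\langle\Pi\rangle\), in particular for \(q=pp^*\) and \(q=p^*\) whenever \(p\!\in\!\Pi\). Thus for each \(p\!\in\!\Pi\) all three of \(R^{Gr}_p(u_{G_N(\Pi)})\), \(R^{Gr}_{pp^*}(u_{G_N(\Pi)})\), \(R^{Gr}_{p^*}(u_{G_N(\Pi)})\) are available, so implication (1) of Lemma \ref{lem:quantum_group_relations_imply_quantum_space_relations} yields \(R^{Sp}_p(u_{G_N(\Pi)})\) for every \(p\!\in\!\Pi\) — here meaning the version of \(R^{Sp}_p\) from Definition \ref{defn:first_d-many_columns-relations}, whose free column multi indices \(\gamma,\gamma'\) range over all of \(T_j\), resp. \(T'_j\).

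It remains to observe that \(R^{Sp}_p(x)\) of Definition \ref{defn:quantum_space_relations_for_the_x_ij} is precisely the restriction of \(R^{Sp}_p(u_{G_N(\Pi)})\) of Definition \ref{defn:first_d-many_columns-relations} to those equations whose free column multi indices satisfy \(\gamma\!\in\!T_j\!\cap\![d]^k\) and \(\gamma'\!\in\!T'_j\!\cap\![d]^l\). Indeed, in each of the families (i), (ii), (iii) the row indices \(t,t'\) of the entries run over \(T_i\!\subseteq\![N]^k\), resp. \(T'_i\!\subseteq\![N]^l\), while the column indices of the entries are the coordinates of \(\gamma\), resp. \(\gamma'\), hence all lie in \([d]\); so every entry appearing is of the form \(u_{ab}\) with \(b\!\le\!d\), i.e. sits in the first \(d\) columns, and replacing \(x_{ij}\) by \(u_{ij}\) in \(R^{Sp}_p(x)\) turns it into a subfamily of the already-established relations \(R^{Sp}_p(u_{G_N(\Pi)})\). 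Therefore the elements \(u_{ij}\) (\(1\!\le\!i\!\le\!N\), \(1\!\le\!j\!\le\!d\)) satisfy \(R^{Sp}_p(x)\) for all \(p\!\in\!\Pi\), and the universal property produces \(\phi\).

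The main — and essentially only — obstacle is purely notational: keeping the several incarnations of the symbol \(R^{Sp}_p(\cdot)\) apart and carefully matching the index conventions of Definitions \ref{defn:first_d-many_columns-relations} and \ref{defn:quantum_space_relations_for_the_x_ij}, so as to be certain that ``restrict the free column indices to \([d]\)'' really is what enters the definition of \(C\big(X_{N,d}(\Pi)\big)\). Once Lemmas \ref{lem:relations_on_generators_imply_relations_for_category} and \ref{lem:quantum_group_relations_imply_quantum_space_relations} are in hand there is no analytic content left; this theorem is a corollary of those two lemmas, just as the one-vector statement (Theorem \ref{thm:hom_from_PQS_into_BSQG_vector_case}) was a corollary of Lemma \ref{lem:first_column-relations}.
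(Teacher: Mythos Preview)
Your argument is correct and follows exactly the paper's approach: the paper's proof simply cites Lemma \ref{lem:quantum_group_relations_imply_quantum_space_relations} together with Remark \ref{rem:quantum_space_relations_always_fulfilled_for_BSQGs}, which is precisely the combination of Lemma \ref{lem:relations_on_generators_imply_relations_for_category} and implication (1) of Lemma \ref{lem:quantum_group_relations_imply_quantum_space_relations} that you spell out. Your added observation that \(R^{Sp}_p(x)\) is just the restriction of \(R^{Sp}_p(u_{G_N(\Pi)})\) to column indices in \([d]\) makes explicit what the paper leaves implicit, but there is no difference in substance.
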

\begin{proof}
This follows directly from Lemma \ref{lem:quantum_group_relations_imply_quantum_space_relations} and Remark \ref{rem:quantum_space_relations_always_fulfilled_for_BSQGs}.
\end{proof}

\begin{rem}
For every easy quantum group permutations on rows and/or columns as well as the mapping \(u_{ij}\mapsto u_{ji}\) define \(^*\)-isomorphisms on it. Therefore we actually have a lot of freedom where to map each \(x_{ij}\) to. Namely for every two permutations \(\sigma_1,\sigma_2\in S_N\) we have that
\begin{align*}
\phi_1\,&:\, x_{ij}\mapsto u_{\sigma_1(i)\sigma_2(j)}\\
\phi_2\,&:\, x_{ij}\mapsto u_{\sigma_2(j)\sigma_1(i)}
\end{align*}
both define alternatives to the \(^*\)-homomorphism \(\phi\) in Theorem \ref{thm:hom_from_PQS_into_BSQG_d-vector_case}.
\end{rem}

\begin{rem}
We could define PQSs without requiring \(\{\paarpartwb,\paarpartbw, \baarpartwb,\baarpartbw\}\!\subseteq\!\Pi\). However the existence of  \(C\big(X_{N,d}(\Pi)\big)\) would not be guaranteed, see Remark \ref{rem:mcpp_guarantees_existence_of_PQS}. In any case, for our purposes we only need to consider PQSs with \(\{\paarpartwb,\paarpartbw, \baarpartwb,\baarpartbw\}\!\subseteq\!\Pi\). See \cite{weber_partition_cstar-algebras_I} for more on issues related to \(\{\paarpartwb,\paarpartbw,\baarpartwb,\baarpartbw\}\!\nsubseteq\!\Pi\).\newline
In principle, also \(d\!>\!N\) is possible, but as some of our results only hold for \(d\!\le\!N\), we excluded all other situations in this work.
\end{rem}

\section{Easy quantum groups act on partition quantum spaces}

For integers \(d\!\le\!N\) and a set of partitions \(\Pi\supseteq\{\paarpartwb,\paarpartbw,\baarpartwb,\baarpartbw\}\) we prove in this section that the corresponding easy quantum group always acts on the corresponding PQS of \(d\) vectors. In this sense we can answer Question \ref{quest:does_BSQG_act_on_PQS-introduction} positively and in full generality.
Loosely speaking, every easy quantum group acts (by left and right matrix multiplication) on a quantum space inspired by its first \(d\) columns.

\begin{thm}\label{thm:existence_of_alpha_and_beta}
Let \(d,N\!\in\!\N\), \(d\!\le\!N\) and \(\Pi\!\supseteq\!\{\paarpartwb,\paarpartbw,\baarpartwb,\baarpartbw\}\) be a set of partitions.
Let \(u_G\!:=\!(u_{ij})\) be the matrix of generators associated to the easy quantum group \(G_N(\Pi)\) and \(x:=(x_{ij})\) the \(d\) vectors of generators associated to the PQS \(X_{N,d}(\Pi)\). In the sense of Definition \ref{defn:actions_CMQG_CMQS_d-vector_case} the following holds:
\begin{itemize}
\item[(i)]The mapping
\[\alpha:x_{ij}\mapsto\displaystyle\sum_{k=1}^{N} u_{ik}\otimes x_{kj}\quad\quad\textnormal{for }1\!\le\!i\!\le\!n,\;1\!\le\!j\!\le\!d\]
defines a left matrix-vector action \(G\!\curvearrowright\!X\).
\item[(ii)]The mapping
\[\beta(x_{ij})=\displaystyle\sum_{k=1}^{N} u_{ki}\otimes x_{kj}\quad\quad\textnormal{for }1\!\le\!i\!\le\!n,\;1\!\le\!j\!\le\!d\]
defines a right matrix-vector action \(X\!\curvearrowleft\!G\).
\end{itemize}
\end{thm}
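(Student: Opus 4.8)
The plan is to invoke the universal property of $C\big(X_{N,d}(\Pi)\big)$. For (i) it suffices to show that the elements $y_{ij}:=\sum_{k=1}^N u_{ik}\otimes x_{kj}\in C(G_N(\Pi))\otimes C(X_{N,d}(\Pi))$ satisfy the defining relations $R^{Sp}_p(y)$ of the PQS for every $p\in\Pi$; the induced $^*$-homomorphism $\alpha$ is then automatically unital and has exactly the form required in Definition \ref{defn:actions_CMQG_CMQS_d-vector_case}. For (ii) the elements $z_{ij}:=\sum_{k=1}^N u_{ki}\otimes x_{kj}$ are obtained from the $y_{ij}$ by replacing $u_G$ with $u_G^T$, and since $R^{Sp}_p(u_G^T)$ holds whenever $R^{Sp}_p(u_G)$ does (Remark \ref{rem:quantum_space_relations_always_fulfilled_for_BSQGs}), the argument for $\beta$ is literally the one for $\alpha$; so I only treat $\alpha$.

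Fix $p\in\mathcal P(\omega,\omega')\subseteq\mathcal P(k,l)$ in $\Pi$ and recall the block decompositions $[N]^k=T_0\,\dot{\cup}\,\cdots\,\dot{\cup}\,T_r$, $[N]^l=T'_0\,\dot{\cup}\,\cdots\,\dot{\cup}\,T'_r$ of Notation \ref{not:partitioning_of_labelings}. The input relations are: $R^{Gr}_p(u_G)$ (by definition of $G_N(\Pi)$), hence $R^{Sp}_p(u_G)$ (Lemma \ref{lem:quantum_group_relations_imply_quantum_space_relations} and Remark \ref{rem:quantum_space_relations_always_fulfilled_for_BSQGs}), and $R^{Sp}_p(x)$ (by definition of the PQS). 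Expanding products of the $y_{ij}$ and interchanging finite sums gives, for $1\le i\le r$ and any column multi-index $\gamma$,
\[\sum_{t\in T_i}y_{t_1\gamma_1}^{\omega_1}\cdots y_{t_k\gamma_k}^{\omega_k}=\sum_{m=0}^{r}\sum_{s\in T_m}\Big(\sum_{t\in T_i}u_{t_1s_1}^{\omega_1}\cdots u_{t_ks_k}^{\omega_k}\Big)\otimes x_{s_1\gamma_1}^{\omega_1}\cdots x_{s_k\gamma_k}^{\omega_k}.\]

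The $m=0$ summand vanishes by $R^{Sp}_p(u_G)$(ii). For $m\ge1$, part (i) of $R^{Sp}_p(u_G)$ rewrites the inner $u$-sum as $\sum_{t'\in T'_i}u_{t'_1\sigma'_1}^{\omega'_1}\cdots u_{t'_l\sigma'_l}^{\omega'_l}$ for any fixed representative $\sigma'=\sigma'(m)\in T'_m$ (independence of the choice: Remark \ref{rem:quantum_space-relations:sums_only_depend_on_T_j_case_u}); pulling this $s$-independent factor out of the $s$-sum leaves $\sum_{s\in T_m}x_{s_1\gamma_1}^{\omega_1}\cdots x_{s_k\gamma_k}^{\omega_k}$ inside the tensor. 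Now specialise to the situation in which $R^{Sp}_p(y)$(i) must be checked, i.e.\ $\gamma\in T_j\cap[d]^k$ together with a fixed $\gamma'\in T'_j\cap[d]^l$: then $R^{Sp}_p(x)$(i) turns that $x$-sum into $\sum_{s'\in T'_m}x_{s'_1\gamma'_1}^{\omega'_1}\cdots x_{s'_l\gamma'_l}^{\omega'_l}$, so the whole expression becomes $\sum_{m=1}^{r}\big(\sum_{t'\in T'_i}u_{t'_1\sigma'_1(m)}^{\omega'_1}\cdots u_{t'_l\sigma'_l(m)}^{\omega'_l}\big)\otimes\big(\sum_{s'\in T'_m}x_{s'_1\gamma'_1}^{\omega'_1}\cdots x_{s'_l\gamma'_l}^{\omega'_l}\big)$. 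Expanding $\sum_{t'\in T'_i}y_{t'_1\gamma'_1}^{\omega'_1}\cdots y_{t'_l\gamma'_l}^{\omega'_l}$ in the same way — splitting along $T'_0,\ldots,T'_r$, discarding the $T'_0$ part via $R^{Sp}_p(u_G)$(iii), and again using Remark \ref{rem:quantum_space-relations:sums_only_depend_on_T_j_case_u} on the remaining blocks — produces exactly the same sum. Hence $R^{Sp}_p(y)$(i) holds; and $R^{Sp}_p(y)$(ii),(iii) follow as well, because in those cases the relevant $x$-sums vanish by $R^{Sp}_p(x)$(ii),(iii), so both sides are $0$. This proves $\alpha$ is well defined, and the same computation with $u_G^T$ gives $\beta$.

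The argument is a sequence of routine manipulations; the part that takes care is purely organisational — keeping the block decompositions of $[N]^k$ and $[N]^l$ synchronised between the quantum-group and the quantum-space sides, choosing the representatives $\sigma'(m)\in T'_m$ coherently, and, for relations (ii) and (iii), handling the ``mixed'' summands in which the inner $u$-sum does not vanish although the accompanying $x$-sum does. The degenerate cases $k=0$ or $l=0$ are absorbed as usual by Remark \ref{rem:case_k,l=0}, the corresponding side being $\mathds{1}$. I expect no genuinely hard step beyond this bookkeeping.
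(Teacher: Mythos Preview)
Your proof is correct and follows essentially the same route as the paper's: expand the products of the $y_{ij}$, split the inner summation index along $T_0\dot\cup\cdots\dot\cup T_r$, kill the $T_0$ contribution via the quantum-group relations, use the well-definedness of the inner $u$-sum as a function of the class $m$ (the paper names this quantity $c_m$ and records the identity $c_m=\sum_{t\in T_i}u_{t_1s_1}^{\omega_1}\cdots u_{t_ks_k}^{\omega_k}=\sum_{t'\in T'_i}u_{t'_1s'_1}^{\omega'_1}\cdots u_{t'_ls'_l}^{\omega'_l}$ directly, where you instead pick a representative $\sigma'(m)\in T'_m$ and invoke Remark \ref{rem:quantum_space-relations:sums_only_depend_on_T_j_case_u}), and then apply $R^{Sp}_p(x)$ to pass from the upper to the lower row. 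The treatment of (ii), (iii) and of $\beta$ is likewise identical in spirit.
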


\begin{proof}
We have to prove that \(\alpha\) and \(\beta\) are well-defined \(^*\)-homomorphisms.
We only consider \(\alpha\), the proof for \(\beta\) can be done analogously.
By the universal property of \(C\big(X_{N,d}(\Pi)\big)\) it suffices to prove that the relations (i)-(iii) from Definition \ref{defn:quantum_space_relations_for_the_x_ij} are fulfilled for \(\tilde{x}_{ij}:=\alpha(x_{ij})=\sum_{s=1}^{N}u_{is}\!\otimes\!x_{sj}\).

We start with relation (i) from Definition \ref{defn:quantum_space_relations_for_the_x_ij}.
We use Notation \ref{not:partitioning_of_labelings} and fix \(1\!\le\!i,j\!\le\!r\), \(\gamma\!\in\!T_j\cap[d]^k\) and \(\gamma'\!\in\!T'_j\cap[d]^l\). By definition we have for all \(1\!\le\!m\!\le\!r\)
\begin{equation}\label{eqn:proof_of_existence_of_alpha_and_beta_1}
\sum_{s\in T_m}x_{s_1\gamma_1}^{\omega_1}\cdots x_{s_k\gamma_k}^{\omega_k}
=\sum_{s'\in T'_m}x_{s'_1\gamma'_1}^{\omega'_1}\cdots x_{s'_l\gamma'_l}^{\omega'_l}
\end{equation}
and due to Remarks \ref{rem:quantum_space_relations_always_fulfilled_for_BSQGs} and \ref{rem:quantum_space-relations:sums_only_depend_on_T_j_case_u}  we have that
\begin{equation}\label{eqn:proof_of_existence_of_alpha_and_beta_2}
c_m:=\sum_{t\in T_i}u_{t_1s_1}^{\omega_1}\cdots u_{t_ks_k}^{\omega_k}=\sum_{t'\in T'_i}u_{t'_1s'_1}^{\omega'_1}\cdots u_{t'_ls'_l}^{\omega'_l}
\end{equation}
only depends on \(m\!\in\!\{0,1,\ldots,r\}\) but not on \((s,s')\!\in\!T_m\!\times\!T'_m\). In particular, \(c_0=0\).

Using Equations \ref{eqn:proof_of_existence_of_alpha_and_beta_1} and \ref{eqn:proof_of_existence_of_alpha_and_beta_2} we infer:

{\allowdisplaybreaks
\begin{align*}
\sum_{t\in T_i}\left(\tilde{x}_{t_1\gamma_1}\right)^{\omega_1}\cdots \left(\tilde{x}_{t_k\gamma_k}\right)^{\omega_k}
=&\sum_{s\in[N]^k}\sum_{t\in T_i} u_{t_1s_1}^{\omega_1}\cdots u_{t_ks_k}^{\omega_k}\otimes x_{s_1\gamma_1}^{\omega_1}\cdots x_{s_k\gamma_k}^{\omega_k}\\
=&\sum_{m=0}^{r}\sum_{s\in T_m}\left(\sum_{t\in T_i} u_{t_1s_1}^{\omega_1}\cdots u_{t_ks_k}^{\omega_k}\right)\otimes x_{s_1\gamma_1}^{\omega_1}\cdots x_{s_k\gamma_k}^{\omega_k}\\
=&0+\sum_{m=1}^{r}\sum_{s\in T_m} c_m\otimes x_{s_1\gamma_1}^{\omega_1}\cdots x_{s_k\gamma_k}^{\omega_k}\\
=&0+\sum_{m=1}^{r} c_m\otimes \left(\sum_{s\in T_m}x_{s_1\gamma_1}^{\omega_1}\cdots x_{s_k\gamma_k}^{\omega_k}\right)\\
=&0+\sum_{m=1}^{r}c_m\otimes \left(\sum_{s'\in T'_m}x_{s'_1\gamma'_1}^{\omega'_1}\cdots x_{s'_l\gamma'_l}^{\omega'_l}\right)\\
=&0+\sum_{m=1}^{r}\sum_{s'\in T'_m}c_m\otimes x_{s'_1\gamma'_1}^{\omega'_1}\cdots x_{s'_l\gamma'_l}^{\omega'_l}\\
=&0+\sum_{m=1}^{r}\sum_{s'\in T'_m}\sum_{t'\in T'_i} u_{t'_1s'_1}^{\omega'_1}\cdots u_{t'_ls'_l}^{\omega'_l}\otimes x_{s'_1\gamma'_1}^{\omega'_1}\cdots x_{s'_l\gamma'_l}^{\omega'_l}\\
=&\sum_{m=0}^{r}\sum_{s'\in T'_m}\sum_{t'\in T'_i} u_{t'_1s'_1}^{\omega'_1}\cdots u_{t'_ls'_l}^{\omega'_l}\otimes x_{s'_1\gamma'_1}^{\omega'_1}\cdots x_{s'_l\gamma'_l}^{\omega'_l}\\
=&\sum_{t\in T'_i}\left(\tilde{x}_{t'_1\gamma'_1}\right)^{\omega'_1}\cdots \left(\tilde{x}_{t'_l\gamma'_l}\right)^{\omega'_l}.
\end{align*}
}
For relation (ii) in Definition \ref{defn:quantum_space_relations_for_the_x_ij} we similarly compute for all \(\gamma\!\in\!T_0\)
\begin{align*}
\sum_{t\in T_i}\left(\tilde{x}_{t_1\gamma_1}\right)^{\omega_1}\cdots \left(\tilde{x}_{t_k\gamma_k}\right)^{\omega_k}
=0+\sum_{m=1}^{r}c_m\otimes \underbrace{\sum_{s\in T_m} x_{s_1\gamma_1}^{\omega_1}\cdots x_{s_k\gamma_k}^{\omega_k}}_{=0}=0.
\end{align*}

Analogously we prove (iii): For all \(\gamma'\!\in\!T'_0\) we have
\begin{align*}
\sum_{t\in T'_i}\left(\tilde{x}_{t'_1\gamma'_1}\right)^{\omega'_1}\cdots \left(\tilde{x}_{t'_l\gamma'_l}\right)^{\omega'_l}
=0+\sum_{m=1}^{r}c_m\otimes \underbrace{\sum_{s'\in T'_m} x_{s'_1\gamma'_1}^{\omega'_1}\cdots x_{s'_l\gamma'_l}^{\omega'_l}}_{=0}=0.
\end{align*}
\end{proof}

\section{Quantum symmetry groups of partition quantum spaces}

In the following we want to take a closer look at the connection between a partition quantum space and its quantum symmetry group. We start with a precise definition of quantum symmetry groups in our setting.

\begin{defn}\label{defn:QSymG}
Let \(X_{N,d}(\Pi)\) be a PQS of \(d\) vectors. We call a CMQG \(G\) the \emph{quantum symmetry group of \(X_{N,d}(\Pi)\)} if there are left and right matrix-vector actions \(\alpha\) and \(\beta\) of \(G\) on \(X_{N,d}(\Pi)\) in the sense of Definition \ref{defn:actions_CMQG_CMQS_d-vector_case} and if \(G\) is maximal with this property. I.e. all \(G'\) fulfilling the above satisfy \(G'\!\subseteq\! G\).
\end{defn}
Note, that \(G'\!\subseteq\! G\) means that we have a \(^*\)-homomorphism \(C(G)\rightarrow C(G')\), sending the entries of \(u_G\) canonically to the entries of \(u_{G'}\).
Of course Definition \ref{defn:QSymG} makes sense not only for PQSs, but for every quantum space of vectors \(X\).
\begin{notation}\label{not:fixing_notations}
For the rest of this work we consider the following situation/notation:
Let \(d,N\!\in\!\N\) and \(d\!\le\!N\).
Let \(\Pi\) be a set of partitions containing the set of all mixed coloured pair partitions \(\{\paarpartwb,\paarpartbw,\baarpartwb,\baarpartbw\}\).
Let \(G\) be the  quantum symmetry group of \(X_{N,d}(\Pi)\) with associated matrix of generators \(v_G:=(v_{ij})\).
Let \(u_{G_N(\Pi)}=(u_{ij})\) be the matrix of generators associated to the easy quantum group \(G_N(\Pi)\).
For a fixed partition \(p\!\in\!P(\omega,\omega')\!\subseteq\!P(k,l)\) we always use the decompositions \([N]^k=T_0\,\dot{\cup}\,\ldots\,\dot{\cup}\, T_r\) and \([N]^l=T'_0\,\dot{\cup}\,\ldots\,\dot{\cup}\, T'_r\) as in Notation \ref{not:partitioning_of_labelings}.
\end{notation}

As \(G\) is the quantum symmetry group of \(X_{N,d}(\Pi)\), we know that \(^*\)-homomorphisms \(\alpha\) and \(\beta\) as decribed in Theorem \ref{thm:existence_of_alpha_and_beta} exist.
The question is now: What does this tell us about \(C(G)\) and how may we explicitly compute relations holding in \(C(G)\)?

\begin{ex}\label{ex:how_to_deduce_relations_for_the_v_ij}
Consider \(\Pi=\{\paarpartwb,\paarpartbw,\baarpartwb,\baarpartbw\}\).
In the PQS \(X_{N,1}(\Pi)\) we have for example \(R_{\scalebox{0.7}{\(\baarpartwb\)}}^{Sp}(x)\), i.e. \(\displaystyle\sum_{i}x_{i1}x_{i1}^*=\mathds{1}\). Applying \(\alpha\) to this equation gives
\begin{equation}\label{eqn:alpha_applied_to_x_ij-relation}
\sum_{i=1}^{N}\sum_{k_1,k_2=1}^{N}v_{ik_1}v_{ik_2}^*\otimes x_{k_11}x_{k_21}^*=\mathds{1}.
\end{equation}
This encodes some information about the \(v_{ij}\),  but it is not always easy to read, as the \(C^*\)-algebra \(C\big(X_{N,d}(\Pi)\big)\) might be quite complicated.

\begin{notation}\label{not:evalutationmap}
Consider any permutation matrix \(\sigma\!\in\!S_N\) and the following chain of unital \(^*\)-homomorphisms:
\begin{center}
\begin{tabular}{rccccccc}
\(\textnormal{ev}_{\sigma}\):&\(C\big(X_{N,d}(\Pi)\big)\)&\(\overset{\phi_1}{\longrightarrow} \)&\(C\big(G_N(\Pi)\big)\)&\(\overset{\phi_2}{\longrightarrow}\)& \(C\big(S_N\big)\)&\(\overset{\phi_3}{\longrightarrow}\)&\(\C\)\\
&\(x_{ij}\)&\(\longmapsto\) & \(u_{ij}\)&\(\longmapsto\) &\(\tilde{u}_{ij}\)&\(\longmapsto\)& \(\delta_{i\sigma(j)}\)
\end{tabular}
\end{center}
Here, \((\tilde{u}_{ij})\!=\!u_{S_N}\) is the matrix of generators corresponding to \(C(S_N)\), i.e. 
\[\tilde{u}_{ij}:\;C\big(S_N\big)\rightarrow\C;\;\tilde{\sigma}\mapsto \tilde{u}_{ij}(\tilde{\sigma})\!=\!\tilde{\sigma}_{ij}\!=\!\delta_{i\tilde{\sigma}(j)}\]
is the coordinate function for the entry \((i,j)\).
Recall that the \(\tilde{u}_{ij}\) are projections summing up to one in each row and column, i.e. \(\sum_{k=1}^{N}\tilde{u}_{ik}\!=\! \sum_{k=1}^{N}\tilde{u}_{ki}\!=\!\mathds{1}\) for all \(i\!\in\![N]\).
The existence of \(\phi_1\) is by Theorem \ref{thm:hom_from_PQS_into_BSQG_d-vector_case}.
The map \(\phi_2\) exists as described as we have \(S_N\!\subseteq\!G_N(\Pi)\) for every easy quantum group \(G_N(\Pi)\), see \cite{banicaspeicherliberation,weber_LNM,weber_PMS}.
For \(\phi_3\) observe, that the point evaluation \(f\mapsto f(\sigma)\) is a character on \(C(S_N)\).
\end{notation}

Applying \(\mathds{1}\!\otimes\!\textnormal{ev}_{\sigma}\) for some \(\sigma\) with \(\sigma(1)\!=\!k\) to Equation \ref{eqn:alpha_applied_to_x_ij-relation} results in
\[\mathds{1}=\sum_{k_1,k_2=1}^{N}\sum_{i=1}^{N}v_{ik}v_{ik}^*\otimes \delta_{k_1\sigma(1)}\delta_{k_2\sigma(1)}=\sum_{i=1}^{N}v_{ik}v_{ik}^*\otimes 1.\]
As \(k\!\in\![N]\) was arbitrary, we therefore proved the relations
\[\sum_{i}v_{ik}v_{ik}^*=\mathds{1}\quad\quad\forall k.\]

\end{ex}

With the strategy presented above, we can prove (see Theorem \ref{thm:hom_from_PQS_to_QSymG}) that the relations \(R^{Sp}_p(x)\) hold for any choice of \(d\) columns of \((v_{ij})\).
We first present a preparing result, keeping the consecutive theorem and its proof compact.

\begin{lem}\label{lem:lemma2_relations_for_v_ij}
In the situation of Notation \ref{not:fixing_notations}, for any \(p\!\in\!\Pi\) the relations \(R_p^{Sp}\) of Definition \ref{defn:quantum_space_relations_for_the_x_ij} hold for the first \(d\) columns of \(v\), i.e.:
\begin{itemize}
\item[(i)]
\(\displaystyle\sum_{t\in T_i} v_{t_1\gamma_1}^{\omega_1}\cdots v_{t_k\gamma_k}^{\omega_k}=\sum_{t'\in T'_i} v_{t'_1\gamma'_1}^{\omega'_1}\cdots v_{t'_l\gamma'_l}^{\omega'_l}\quad,\quad1\!\le \!i,j\!\le\!r\), \(\gamma\!\in\!T_{j}\cap[d]^k,\gamma'\!\in\!T'_{j}\cap[d]^ l.\)\vspace{4pt}\newline

\item[(ii)]
\(\displaystyle\sum_{t\in T_i}v_{t_1\gamma_1}^{\omega_1}\cdots v_{t_k\gamma_k}^{\omega_k}=0\quad,\quad1\!\le\!i\!\le\!r,\gamma\!\in\!T_0\cap[d]^k.\)\vspace{8pt}

\item[(iii)]
\(\displaystyle \sum_{t'\in T'_i}v_{t'_1\gamma'_1}^{\omega'_1}\cdots v_{t'_l\gamma'_l}^{\omega'_l}=0\quad,\quad1\!\le\!i\!\le\!r,\gamma\!\in\!T'_0\cap[d]^l.\)\vspace{8pt}
\end{itemize}
\end{lem}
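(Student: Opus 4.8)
The plan is to push the defining relations of $C\big(X_{N,d}(\Pi)\big)$ forward along a single auxiliary $^*$-homomorphism obtained from the left action of $G$, and the only structural input needed is that such an action exists, which is exactly what Definition \ref{defn:QSymG} guarantees for the quantum symmetry group $G$.

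Concretely, I would let $\mathrm{ev}_{\mathrm{id}}$ denote the character $\mathrm{ev}_\sigma$ of Notation \ref{not:evalutationmap} associated to the identity permutation of $[N]$, so that $\mathrm{ev}_{\mathrm{id}}(x_{ij})=\delta_{ij}$ for all $1\!\le\!i\!\le\!N$, $1\!\le\!j\!\le\!d$. Since $G$ is the quantum symmetry group of $X_{N,d}(\Pi)$, there is a left matrix-vector action $\alpha\colon C\big(X_{N,d}(\Pi)\big)\to C(G)\otimes C\big(X_{N,d}(\Pi)\big)$ with $\alpha(x_{ij})=\sum_{k=1}^{N}v_{ik}\otimes x_{kj}$. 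I would then set
\[\psi := \big(\mathrm{id}_{C(G)}\otimes\mathrm{ev}_{\mathrm{id}}\big)\circ\alpha\colon\; C\big(X_{N,d}(\Pi)\big)\longrightarrow C(G)\otimes\C\cong C(G),\]
a unital $^*$-homomorphism, and compute $\psi(x_{ij})=\sum_{k=1}^{N}v_{ik}\,\delta_{kj}=v_{ij}$ for all $1\!\le\!i\!\le\!N$, $1\!\le\!j\!\le\!d$ (this is legitimate precisely because the column index $j$ is $\le d$, hence $v_{ij}$ is an entry of $v_G$). In other words $\psi$ is nothing but the renaming $x_{ij}\mapsto v_{ij}$ on the first $d$ columns.

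Now the three asserted families of identities follow immediately: because $p\in\Pi$, the relations $R_p^{Sp}(x)$ of Definition \ref{defn:quantum_space_relations_for_the_x_ij} hold in $C\big(X_{N,d}(\Pi)\big)$ by Definition \ref{defn:PQS_matrix_case}; applying the unital $^*$-homomorphism $\psi$ to equation (i) of $R_p^{Sp}(x)$ sends every monomial $x_{t_1\gamma_1}^{\omega_1}\cdots x_{t_k\gamma_k}^{\omega_k}$ to $v_{t_1\gamma_1}^{\omega_1}\cdots v_{t_k\gamma_k}^{\omega_k}$ (all column indices occurring lie in $[d]$), so (i) of $R_p^{Sp}(x)$ is carried onto precisely the claimed identity (i) for the first $d$ columns of $v$, and applying $\psi$ to the relations (ii) and (iii) of $R_p^{Sp}(x)$, whose right-hand sides are $0$, yields (ii) and (iii) of the Lemma in the same way.

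There is no genuine analytic obstacle; the one point I would be careful about is the well-definedness of $\psi$, i.e.\ that $\mathrm{ev}_{\mathrm{id}}$ really is a character of $C\big(X_{N,d}(\Pi)\big)$. This is the composite $\phi_3\circ\phi_2\circ\phi_1$ of Notation \ref{not:evalutationmap}: $\phi_1$ exists by Theorem \ref{thm:hom_from_PQS_into_BSQG_d-vector_case}, $\phi_2$ because $S_N\subseteq G_N(\Pi)$, and $\phi_3$ is point evaluation at the identity permutation; tensoring with $\mathrm{id}_{C(G)}$ on the minimal tensor product then keeps it a $^*$-homomorphism. If one wishes to avoid the composite map, the identical computation can be done by hand exactly as in Example \ref{ex:how_to_deduce_relations_for_the_v_ij}: expand $\alpha\big(x_{t_1\gamma_1}^{\omega_1}\cdots x_{t_k\gamma_k}^{\omega_k}\big)=\sum_{s\in[N]^k}v_{t_1s_1}^{\omega_1}\cdots v_{t_ks_k}^{\omega_k}\otimes x_{s_1\gamma_1}^{\omega_1}\cdots x_{s_k\gamma_k}^{\omega_k}$, sum over $t\in T_i$, and apply $\mathrm{id}\otimes\mathrm{ev}_{\mathrm{id}}$ to collapse the $s$-sum to the single term $s=\gamma$. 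Either way the work is pure bookkeeping, all the content being the existence of $\alpha$ supplied by the definition of the quantum symmetry group.
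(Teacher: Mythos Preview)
Your argument is correct and is essentially the paper's own proof: both apply the unital $^*$-homomorphism $(\mathrm{id}\otimes\mathrm{ev}_\sigma)\circ\alpha$ to the defining relations $R_p^{Sp}(x)$ and read off the claim. The only cosmetic difference is that the paper carries a general $\sigma\in S_N$ through the computation (obtaining the $\sigma$-permuted identities, Equations \ref{eqn:lemma2_results_for_v_ij_relation (i)} and \ref{eqn:lemma2_results_for_v_ij_relation_(ii)}) before specializing to $\sigma=\mathrm{id}$, whereas you take $\sigma=\mathrm{id}$ from the outset; the paper's extra generality is not needed for the lemma itself but is reused immediately afterwards in Theorem \ref{thm:hom_from_PQS_to_QSymG} and Remark \ref{rem:new_remark_to_lemma2_relations_for_v_ij}.
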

\begin{proof}
We start with the relations in (i).
In virtue of Definition \ref{defn:quantum_space_relations_for_the_x_ij} we have for \(i,j,\gamma,\gamma'\) as defined above
\[\sum_{t\in T_i}x_{t_1\gamma_1}^{\omega_1}\cdots x_{t_k\gamma_k}^{\omega_k}=\sum_{t'\in T'_i}x_{t'_1\gamma'_1}^{\omega'_1}\cdots x_{t'_l\gamma'_l}^{\omega'_l}.\]
We consider \(\sigma\!\in\!S_N\) and apply \((\mathds{1}\otimes\textnormal{ev}_{\sigma})\circ\alpha\) to receive
\begin{align}\label{eqn:lemma2_results_for_v_ij_relation (i)}
\begin{split}
\sum_{t\in T_i} v_{t_1\sigma(\gamma_1)}^{\omega_1}\cdots v_{t_k\sigma(\gamma_k)}^{\omega_k}\otimes 1
=\sum_{t'\in T'_i} v_{t'_1\sigma(\gamma'_1)}^{\omega'_1}\cdots v_{t'_l\sigma(\gamma'_l)}^{\omega'_l}\otimes 1.
\end{split}
\end{align}
For \(\sigma\!=\!\textnormal{id}\) this is our claim.

We pass now to the relations (ii) and (iii). Starting with
\[\sum_{t\in T_i}x_{t_1\gamma_1}^{\omega_1}\cdots x_{t_k\gamma_k}^{\omega_k}=0\quad\quad\textnormal{and}\quad\quad\sum_{t'\in T'_i}x_{t'_1\gamma'_1}^{\omega'_1}\cdots x_{t'_l\gamma'_l}^{\omega'_l}=0\]
we deduce similarly to the case of Equation (i) with the help of \((\mathds{1}\otimes\textnormal{ev}_{\sigma})\circ\alpha\):
\begin{equation}\label{eqn:lemma2_results_for_v_ij_relation_(ii)}
\sum_{t\in T_i} v_{t_1\sigma(\gamma_1)}^{\omega_1}\cdots v_{t_k\sigma(\gamma_k)}^{\omega_k}\otimes 1=0
\quad\quad\textnormal{and}\quad\quad
\sum_{t'\in T'_i}x_{t_1\sigma(\gamma'_1)}^{\omega'_1}\cdots x_{t'_l\sigma(\gamma'_l)}^{\omega'_l}\otimes 1=0
\end{equation}
In both cases this includes with (\(\sigma=\textnormal{id}\)) the desired relation.
\end{proof}

Without any further work we can now prove the existence of \(^*\)-homomorphisms from \(C\big(X_{N,d}(\Pi)\big)\) to \(C(G)\), where \(G\) is the quantum symmetry group of \(X_{N,d}(\Pi)\).

\begin{thm}\label{thm:hom_from_PQS_to_QSymG}
Consider the situation of Notation \ref{not:fixing_notations}. For any \(\sigma_1,\sigma_2\!\in\!S_N\) the following mappings define \(^*\)-homomorphisms from \(C\big(X_{N,d}(\Pi)\big)\) to \(C(G)\):
\[
\phi:\quad x_{ij}\mapsto v_{\sigma_1(i)\sigma_2(j)}
\quad\quad;\quad\quad
\phi^T:\quad x_{ij}\mapsto v_{\sigma_2(j)\sigma_1(i)}\]
In particular (for \(\sigma_1\!=\sigma_2\!=\!\textnormal{id}\)) the mapping
\[\phi:\quad x_{ij}\mapsto v_{ij}\]
defines a \(^*\)-homomorphism from \(C\big(X_{N,d}(\Pi)\big)\) to \(C(G)\).
\end{thm}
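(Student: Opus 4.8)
The plan is to invoke the universal property of \(C\big(X_{N,d}(\Pi)\big)\): a unital \(^*\)-homomorphism out of it is well defined as soon as the prescribed images of the generators \(x_{ij}\) satisfy every defining relation \(R^{Sp}_p(x)\), \(p\!\in\!\Pi\), of Definition~\ref{defn:quantum_space_relations_for_the_x_ij}. For \(\sigma_1\!=\!\sigma_2\!=\!\textnormal{id}\) and the map \(x_{ij}\mapsto v_{ij}\) this is precisely the content of Lemma~\ref{lem:lemma2_relations_for_v_ij}, so the remaining work is to absorb arbitrary row- and column-permutations. The column permutation \(\sigma_2\) costs essentially nothing: the identities obtained in the proof of Lemma~\ref{lem:lemma2_relations_for_v_ij} from \((\mathds{1}\otimes\textnormal{ev}_\sigma)\circ\alpha\) were valid for \emph{every} \(\sigma\!\in\!S_N\) and were only afterwards specialised to \(\sigma\!=\!\textnormal{id}\); reading them with \(\sigma\!=\!\sigma_2\) gives exactly the relations \(R^{Sp}_p\) for the \(d\) columns of \(v\) indexed by \(\sigma_2(1),\dots,\sigma_2(d)\).

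The row permutation \(\sigma_1\) is handled by a combinatorial reindexing. The point is that applying \(\sigma_1\!\in\!S_N\) coordinatewise to a multi-index preserves validity of a labeling (since \(\sigma_1\) is injective, equal labels stay equal and distinct labels stay distinct) and commutes with restriction to the through-block points; hence \(\sigma_1\) permutes the set of valid through-block labelings, and by property (vi) of Notation~\ref{not:partitioning_of_labelings} together with the uniqueness part of the decomposition this induces a \emph{single} permutation \(\pi\!\in\!S_r\) with \(\sigma_1(T_m)=T_{\pi(m)}\) and \(\sigma_1(T'_m)=T'_{\pi(m)}\) for \(1\!\le\!m\!\le\!r\), while \(\sigma_1(T_0)=T_0\) and \(\sigma_1(T'_0)=T'_0\). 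Therefore the substitution \(t\mapsto\sigma_1(t)\) turns a sum over \(T_i\) (resp. \(T'_i\)) into a sum over \(T_{\pi(i)}\) (resp. \(T'_{\pi(i)}\)). Applying \(\phi: x_{ij}\mapsto v_{\sigma_1(i)\sigma_2(j)}\) to \(R^{Sp}_p(x)\) and re-indexing the row sums in this way reduces the identity to be checked at index \(i\) to the corresponding identity for \(v\) (in the form valid for \(\sigma\!=\!\sigma_2\), with admissible labels \(\gamma\!\in\!T_j\!\cap\![d]^k\), \(\gamma'\!\in\!T'_j\!\cap\![d]^l\)) at index \(\pi(i)\); as \(\pi\) is a bijection of \(\{1,\dots,r\}\) and Lemma~\ref{lem:lemma2_relations_for_v_ij} holds at every index, \(\phi\) respects all relations of type (i). Relations of type (ii) and (iii) are treated identically, using \(\sigma_1(T_0)=T_0\), \(\sigma_1(T'_0)=T'_0\) and the corresponding vanishing identities from that proof. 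Unitality of \(\phi\) then follows by evaluating it on a relation forcing \(\sum_{i}x_{i1}x_{i1}^*=\mathds{1}\) contained in \(\Pi\).

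For \(\phi^T: x_{ij}\mapsto v_{\sigma_2(j)\sigma_1(i)}\) I would first record the ``transposed'' version of Lemma~\ref{lem:lemma2_relations_for_v_ij}, obtained by running its proof with the \emph{right} action \(\beta\) in place of \(\alpha\): since \(G\) is the quantum symmetry group of \(X_{N,d}(\Pi)\) the map \(\beta\) exists, and applying \((\mathds{1}\otimes\textnormal{ev}_\sigma)\circ\beta\) to \(R^{Sp}_p(x)\) (using \(\beta(x_{ij})=\sum_k v_{ki}\otimes x_{kj}\)) yields, for every \(\sigma\!\in\!S_N\), the identities \(\sum_{t\in T_i}v^{\omega_1}_{\sigma(\gamma_1)t_1}\cdots v^{\omega_k}_{\sigma(\gamma_k)t_k}=\sum_{t'\in T'_i}v^{\omega'_1}_{\sigma(\gamma'_1)t'_1}\cdots v^{\omega'_l}_{\sigma(\gamma'_l)t'_l}\) together with the analogous vanishing statements for \(\gamma\!\in\!T_0\), \(\gamma'\!\in\!T'_0\); that is, the matrix \(v^T\) satisfies \(R^{Sp}_p\) for its first \(d\) columns. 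Re-indexing the row sums by \(\sigma_1\) exactly as above (now \(\sigma_1\) acts on the second index of \(v\)) then shows that \(\phi^T\) respects every \(R^{Sp}_p(x)\), hence defines a unital \(^*\)-homomorphism \(C\big(X_{N,d}(\Pi)\big)\to C(G)\). The case \(\sigma_1\!=\!\sigma_2\!=\!\textnormal{id}\) in \(\phi\) gives the stated map \(x_{ij}\mapsto v_{ij}\).

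I expect the only real friction to be the bookkeeping in the row-permutation step: one must verify that a single \(\pi\!\in\!S_r\) simultaneously transports the upper decomposition \(\{T_m\}\) and the lower decomposition \(\{T'_m\}\) — this is exactly where the identification of the index \(m\) with the common through-block labeling of the pair \((T_m,T'_m)\) (property (vi) of Notation~\ref{not:partitioning_of_labelings} and the uniqueness of the decomposition) is used — and that throughout the reductions the label constraints \(\gamma\!\in\!T_j\!\cap\![d]^k\), \(\gamma'\!\in\!T'_j\!\cap\![d]^l\) are preserved, so that Lemma~\ref{lem:lemma2_relations_for_v_ij} (resp. its \(\beta\)-analogue) is being applied with admissible indices. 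Everything else is a direct transcription of the arguments already present in the proof of Lemma~\ref{lem:lemma2_relations_for_v_ij}.
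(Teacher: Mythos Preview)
Your proposal is correct and follows essentially the same approach as the paper. The only cosmetic difference is in the handling of the row permutation \(\sigma_1\): the paper observes that \(x_{ij}\mapsto x_{\sigma_1(i)j}\) is a \(^*\)-automorphism of \(C\big(X_{N,d}(\Pi)\big)\) (since applying \(i\mapsto\sigma_1(i)\) ``only permutes summands'' in each relation) and then factors \(\phi\) through this automorphism, whereas you verify the permuted relations directly by the reindexing \(t\mapsto\sigma_1(t)\); your construction of the induced permutation \(\pi\in S_r\) of the through-block labelings is precisely the content of that one-line remark made explicit. The treatment of \(\sigma_2\) via the general-\(\sigma\) equations from the proof of Lemma~\ref{lem:lemma2_relations_for_v_ij}, and of \(\phi^T\) by rerunning that lemma with \(\beta\) in place of \(\alpha\), coincide with the paper's argument.
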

\begin{proof}
By the universal property of \(C\big(X_{N,d}(\Pi)\big)\) we only have to show that the relations \(\big(R^{Sp}_p(x)\big)_{p\in\Pi}\) are fulfilled when we replace every \(x_{ij}\) by \(v_{\sigma_1(i)\sigma_2(j)}\) or \(v_{\sigma_2(j)\sigma_1(i)}\), respectively. Thanks to our results so far there is not much left to do.

Consider first the map \(\phi\).
If \(\sigma_1=\textnormal{id}\), then \(\phi\) exists, because we have with Equations \ref{eqn:lemma2_results_for_v_ij_relation (i)} and \ref{eqn:lemma2_results_for_v_ij_relation_(ii)} from Lemma \ref{lem:lemma2_relations_for_v_ij} all required relations for the \(v_{ij}\) at hand.
We can additionally have \(\sigma_1\!\neq\textnormal{id}\) as the mapping \(x_{ij}\mapsto x_{\sigma_1(i)j}\) defines a \(^*\)-isomorphism on \(C\big(X_{N,d}(\Pi)\big)\): Considering the quantum space relations \(R_p^{Sp}(x)\) from Definition \ref{defn:quantum_space_relations_for_the_x_ij}, we see that applying \(i\mapsto\sigma(i)\) only permutes summands.

The existence of \(\phi^T\) is proved by starting again at Lemma \ref{lem:lemma2_relations_for_v_ij} and replacing \(\alpha\) with \(\beta\). We obtain all the ensuing results with  \(v_{ij}\) replaced by \(v_{ji}\).
\end{proof}

\begin{rem}\label{rem:new_remark_to_lemma2_relations_for_v_ij}
Having a closer look at Equation \ref{eqn:lemma2_results_for_v_ij_relation (i)} one can even deduce that for arbitrary \(\sigma,\sigma'\!\in\!S_N\) we have
\[\sum_{t\in T_i} v_{t_1\sigma(\gamma_1)}^{\omega_1}\cdots v_{t_k\sigma(\gamma_k)}^{\omega_k}
=\sum_{t'\in T'_i} v_{t'_1\sigma'(\gamma'_1)}^{\omega'_1}\cdots v_{t'_l\sigma'(\gamma'_l)}^{\omega'_l}\]
as long as \(\sigma\) and \( \sigma'\) coincide on the through-block labelings of \(\gamma\) (or \(\gamma'\), which is the same condition).
In this sense we can ignore in Lemma \ref{lem:lemma2_relations_for_v_ij}, Equation (i) the restrictions \(\gamma\!\in\![d]^k\) and \(\gamma'\!\in\![d]^l\) as long as each tupel has at most \(d\) different entries.
For the relations (ii) and (iii) this follows directly from the Equations \ref{eqn:lemma2_results_for_v_ij_relation_(ii)}.
\end{rem}

For the case \(d\!=\!N\) we can now fully answer Question \ref{quest:is_BSQG_QSymG_of_PQS_introduction} from the introduction.

\begin{cor}\label{cor:X_is_BSQS_if_d=N}
Consider the situation of Notation \ref{not:fixing_notations}. If \(d\!=\!N\), then \(G=G_N(\Pi)\), i.e. \(G_N(\Pi)\) is the quantum symmetry group of \(X_{N,N}(\Pi)\).
\end{cor}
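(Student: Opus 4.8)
The plan is to establish both inclusions $G_N(\Pi)\subseteq G$ and $G\subseteq G_N(\Pi)$ in the sense explained after Definition \ref{defn:QSymG}, and then to note that since the two witnessing $^*$-homomorphisms are the identity on generators, they are mutually inverse, so $G=G_N(\Pi)$.

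For $G_N(\Pi)\subseteq G$ there is nothing new to do: by Theorem \ref{thm:existence_of_alpha_and_beta} the easy quantum group $G_N(\Pi)$ admits a left and a right matrix-vector action on $X_{N,N}(\Pi)$, so the maximality property in Definition \ref{defn:QSymG} immediately yields a $^*$-homomorphism $C(G)\rightarrow C(G_N(\Pi))$ sending $v_{ij}\mapsto u_{ij}$.

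The substance is the reverse inclusion, and this is exactly where the hypothesis $d=N$ is used: then $[d]^k=[N]^k$ for every $k$, so the conditions $\gamma\in T_j\cap[d]^k$, $\gamma\in T_0\cap[d]^k$, $\gamma'\in T'_0\cap[d]^l$ occurring in Lemma \ref{lem:lemma2_relations_for_v_ij} impose no restriction at all, and likewise for the lower-row indices. Hence Lemma \ref{lem:lemma2_relations_for_v_ij} says precisely that, for every $p\in\Pi$, the matrix $v_G=(v_{ij})$ satisfies the relations $R^{Sp}_p(v_G)$ in the sense of Definition \ref{defn:first_d-many_columns-relations}. Running the same argument with the right action $\beta$ in place of $\alpha$ — equivalently, using the $^*$-homomorphism $\phi^T\colon x_{ij}\mapsto v_{ji}$ of Theorem \ref{thm:hom_from_PQS_to_QSymG} with $\sigma_1=\sigma_2=\mathrm{id}$, whose existence forces the relations $R^{Sp}_p(x)$ to hold after the substitution $x_{ij}\mapsto v_{ji}$ — gives that $v_G^T$ also satisfies $R^{Sp}_p(v_G^T)$ for every $p\in\Pi$. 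Now Lemma \ref{lem:quantum_group_relations_imply_quantum_space_relations}(2) upgrades the pair of relations $R^{Sp}_p(v_G),\,R^{Sp}_p(v_G^T)$ to the full quantum group relations $R^{Gr}_p(v_G)$, for every $p\in\Pi$. Since these are exactly the defining relations of $C\big(G_N(\Pi)\big)$ and the $v_{ij}$ generate $C(G)$ (with $v_G,\bar v_G$ invertible, $G$ being a CMQG), the universal property of $C\big(G_N(\Pi)\big)$ produces a surjective $^*$-homomorphism $C\big(G_N(\Pi)\big)\rightarrow C(G)$, $u_{ij}\mapsto v_{ij}$, i.e. $G\subseteq G_N(\Pi)$. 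Composing with the map from the previous paragraph fixes each generator $u_{ij}$, hence is the identity, and symmetrically on $C(G)$, so $G=G_N(\Pi)$.

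I do not expect a genuine obstacle: all the heavy lifting is in Lemmas \ref{lem:lemma2_relations_for_v_ij} and \ref{lem:quantum_group_relations_imply_quantum_space_relations}. The only points deserving care are the (elementary) observation that $d=N$ removes every column restriction from Lemma \ref{lem:lemma2_relations_for_v_ij}, and the bookkeeping ensuring that the left and right actions together supply \emph{both} $R^{Sp}_p(v_G)$ and $R^{Sp}_p(v_G^T)$ so that Lemma \ref{lem:quantum_group_relations_imply_quantum_space_relations}(2) applies.
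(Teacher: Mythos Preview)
Your proof is correct and follows essentially the same route as the paper: one inclusion by Theorem \ref{thm:existence_of_alpha_and_beta} and maximality, the other by observing that for $d=N$ the column restrictions in Lemma \ref{lem:lemma2_relations_for_v_ij}/Theorem \ref{thm:hom_from_PQS_to_QSymG} disappear so that $R_p^{Sp}(v_G)$ and $R_p^{Sp}(v_G^T)$ hold, after which Lemma \ref{lem:quantum_group_relations_imply_quantum_space_relations}(2) yields $R_p^{Gr}(v_G)$. Your added remark that the two resulting $^*$-homomorphisms are mutually inverse on generators is a harmless elaboration of what the paper leaves implicit.
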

\begin{proof}
Due to Theorem \ref{thm:existence_of_alpha_and_beta} we already know \(G_N(\Pi)\!\subseteq\!G\) so there is only ``\(\supseteq\)'' left to prove, i.e. it remains to show that the quantum group relations \(\big(R_p^{Gr}(v_G)\big)_{p\in\Pi}\) are fulfilled.
In the case \(d\!=\!N\) Theorem \ref{thm:hom_from_PQS_to_QSymG} reads as \(R_p^ {Sp}(x)\!\Rightarrow\!R_p^{Sp}(v_G),R_p^{Sp}(v_G^T)\) and by part (2) of Lemma \ref{lem:quantum_group_relations_imply_quantum_space_relations} the quantum space relations for \(v_G\) and \(v_G^T\) imply \(R_p^{Gr}(v_G)\).
\end{proof}

There is one further consequence of the theorem above, coming from the fact, that \(G_N(\Pi)\) only depends on the category \(\langle\Pi\rangle\) and not \(\Pi\) itself.

\begin{cor}\label{cor:QSymG_only_depends_on_category_if_d=N}
In the situation of Notation \ref{not:fixing_notations} the quantum symmetry group of \(X_{N,N}(\Pi)\) only depends on \(\langle\Pi\rangle\), not \(\Pi\) itself.
\end{cor}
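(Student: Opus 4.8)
The plan is to read this off from Corollary \ref{cor:X_is_BSQS_if_d=N} together with the basic fact, recorded in the discussion following Lemma \ref{lem:relations_on_generators_imply_relations_for_category}, that the easy quantum group $G_N(\Pi)$ depends only on the category $\langle\Pi\rangle$ generated by $\Pi$, and not on the particular generating set $\Pi$. Concretely, first I would invoke Corollary \ref{cor:X_is_BSQS_if_d=N}: in the situation of Notation \ref{not:fixing_notations} with $d=N$, the quantum symmetry group of $X_{N,N}(\Pi)$ is exactly $G_N(\Pi)$. So it suffices to observe that $G_N(\Pi)$ itself is an invariant of $\langle\Pi\rangle$.

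For the latter, suppose $\Pi_1$ and $\Pi_2$ are two sets of partitions, both containing $\{\paarpartwb,\paarpartbw,\baarpartwb,\baarpartbw\}$, with $\langle\Pi_1\rangle=\langle\Pi_2\rangle=:\mathcal{C}$. By Lemma \ref{lem:relations_on_generators_imply_relations_for_category}, imposing the relations $R^{Gr}_p(u_G)$ for all $p\!\in\!\Pi_i$ is equivalent to imposing them for all $p\!\in\!\mathcal{C}$ (one inclusion is trivial since $\Pi_i\subseteq\mathcal{C}$, the other is the content of the lemma). Hence the universal $C^*$-algebras agree, $C\big(G_N(\Pi_1)\big)=C\big(G_N(\mathcal{C})\big)=C\big(G_N(\Pi_2)\big)$, so $G_N(\Pi_1)=G_N(\Pi_2)$ as CMQGs. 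Combining this with Corollary \ref{cor:X_is_BSQS_if_d=N} gives that the quantum symmetry groups of $X_{N,N}(\Pi_1)$ and $X_{N,N}(\Pi_2)$ coincide.

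There is essentially no obstacle here: the corollary is pure bookkeeping on top of Corollary \ref{cor:X_is_BSQS_if_d=N}. The only conceptual point worth flagging (and which makes the statement non-vacuous) is that the space $X_{N,N}(\Pi)$ genuinely \emph{can} change with $\Pi$, since the quantum space relations $R^{Sp}_p(x)$ are imposed only for $p\!\in\!\Pi$ and need not be closed under the category operations; the assertion is about the invariance of the output of the quantum-symmetry-group construction, not of the input space.
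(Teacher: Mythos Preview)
Your proposal is correct and follows exactly the route the paper intends: the corollary is stated without proof precisely because it is immediate from Corollary~\ref{cor:X_is_BSQS_if_d=N} together with the fact (noted after Lemma~\ref{lem:relations_on_generators_imply_relations_for_category}) that $G_N(\Pi)$ depends only on $\langle\Pi\rangle$. Your closing remark that $X_{N,N}(\Pi)$ itself may vary with $\Pi$ is a nice clarification of why the statement has content.
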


\begin{rem}\label{rem:proof-plan_for_d<N}
Having a closer look at the proof of Corollary \ref{cor:X_is_BSQS_if_d=N}, we see that it is just an application of the relations proved in Lemma \ref{lem:lemma2_relations_for_v_ij}.
Of course it heavily uses the fact \(d\!=\!N\) in the way that \(\gamma\!\in\![d]^k\) and \(\gamma'\!\in\![d]^l\) are actually no further restrictions.
In the following we will consider the situation as in Corollary \ref{cor:X_is_BSQS_if_d=N} but with \(d\!<\!N\). In the sense of Lemma \ref{lem:lemma2_relations_for_v_ij} and Remark \ref{rem:new_remark_to_lemma2_relations_for_v_ij} the only thing to prove is that the equations there are fulfilled for \(\gamma\!\in\!T_j\) and \(\gamma'\!\in\!T'_j\), potentially each with more than \(d\) different entries. Up to now, we are only able to do this for concretely given sets \(\Pi\), so further results will depend not only on \(\langle\Pi\rangle\), but \(\Pi\) itself.
\end{rem}

\section{The Free case}\label{sec:free_case}
In this section we assume that \(\Pi\) defines a free easy quantum group, i.e. \(\Pi\) is a set of non-crossing partitions. In the sense of Remark \ref{rem:proof-plan_for_d<N} we want to find situations where \(G_N(\Pi)\) is the quantum symmetry group of \(X_{N,d}(\Pi)\) for some \(d<N\). Note that this implies the same result for \(d'\) with \(d\!\le\!d'\!\le\!N\), so (for fixed \(\Pi\)) we want to show this for \(d\) as small as possible.\vspace{11pt}

We first gather some results about relations on the \(v_{ij}\) of Notation \ref{not:fixing_notations} which are implied by special partitions \(p\!\in\!\Pi\) (and special choices of \(\Pi\)).

\begin{lem}\label{lem:relations_of_four-block}
Consider the situation as in Notation \ref{not:fixing_notations} with \(d\!\le\!N\) arbitrary. Assume \(\vierpartwbwb\!\in\!\Pi\). Then the relations \(R^{Gr}_{\scalebox{0.8}{\vierpartwbwb}}(v_G)\)\vspace{3pt}   are fulfilled, i.e.
\[\sum_{t'=1}^{N}v_{\gamma'_1t'}v_{\gamma'_2t'}^*v_{\gamma'_3t'}v_{\gamma'_4t'}^*=\delta_{\gamma'_1\gamma'_2}
\delta_{\gamma'_2\gamma'_3}\delta_{\gamma'_3\gamma'_4}\quad\quad\forall\;\gamma'\!\in\![N]^4.\]
The analogous result holds for \(\vierpartwwbb\!\in\!\Pi\) and the relations \(R^{Gr}_{\scalebox{0.8}{\vierpartwwbb}}(v_G)\), i.e.
\[\sum_{t'=1}^{N}v_{\gamma'_1t'}v_{\gamma'_2t'}v_{\gamma'_3t'}^*v_{\gamma'_4t'}^*=\delta_{\gamma'_1\gamma'_2}
\delta_{\gamma'_2\gamma'_3}\delta_{\gamma'_3\gamma'_4}\quad\quad\forall\;\gamma'\!\in\![N]^4.\]

\end{lem}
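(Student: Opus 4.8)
The plan is to transport the space relations into \(C(G)\) along the \(^*\)-homomorphisms of Theorem \ref{thm:hom_from_PQS_to_QSymG}, extract from them a partial-isometry structure for the entries \(v_{ij}\), and then read off \(R^{Gr}_{\scalebox{0.8}{\vierpartwbwb}}(v_G)\) index by index. Concretely, feeding the relations coming from \(\{\paarpartwb,\paarpartbw,\baarpartwb,\baarpartbw\}\) and relation (i) of \(R^{Sp}_{\scalebox{0.7}{\vierpartwbwb}}\) through \(\phi\) and \(\phi^T\) (pre-composed with row/column permutations, which are \(^*\)-automorphisms of \(C(X_{N,d}(\Pi))\)) yields, for all \(c,s\in[N]\),
\[\sum_{t=1}^{N}v_{ct}v_{ct}^{*}=\sum_{t=1}^{N}v_{tc}v_{tc}^{*}=\sum_{t=1}^{N}v_{tc}^{*}v_{tc}=\mathds{1}\qquad\text{and}\qquad\sum_{t=1}^{N}v_{ct}v_{ct}^{*}v_{ct}v_{ct}^{*}=\mathds{1}.\]
These all involve only the single index \(c\), i.e.\ constant labelings, so by Remark \ref{rem:new_remark_to_lemma2_relations_for_v_ij} they hold for \emph{every} \(c\) irrespective of \(d\); this is the point where the restriction ``at most \(d\) distinct indices'' of Lemma \ref{lem:lemma2_relations_for_v_ij} is harmless.

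Next I would run a positivity argument. Since \(v_{cs}v_{cs}^{*}\le\sum_{t}v_{ct}v_{ct}^{*}=\mathds{1}\), each summand
\[v_{cs}v_{cs}^{*}-v_{cs}v_{cs}^{*}v_{cs}v_{cs}^{*}=(v_{cs}v_{cs}^{*})^{1/2}(\mathds{1}-v_{cs}v_{cs}^{*})(v_{cs}v_{cs}^{*})^{1/2}\]
is positive, and they sum to \(0\); hence \(v_{cs}v_{cs}^{*}\) is a projection for all \(c,s\). Thus \(v_{cs}\) is a partial isometry, \(v_{cs}^{*}v_{cs}\) is a projection, and (each family summing to \(\mathds{1}\) for fixed \(s\)) the projections \(\{v_{cs}v_{cs}^{*}\}_{c}\) and \(\{v_{cs}^{*}v_{cs}\}_{c}\) are mutually orthogonal. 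Combining \(v_{cs}=v_{cs}(v_{cs}^{*}v_{cs})\), \(v_{es}^{*}=(v_{es}^{*}v_{es})v_{es}^{*}\) with \((v_{cs}^{*}v_{cs})(v_{es}^{*}v_{es})=0\) gives \(v_{cs}v_{es}^{*}=0\) for \(c\ne e\), and symmetrically \(v_{cs}^{*}v_{es}=0\) for \(c\ne e\).

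Finally I would read off the relations. Fix \(\gamma'\in[N]^{4}\). In \(\sum_{t}v_{\gamma'_1 t}v_{\gamma'_2 t}^{*}v_{\gamma'_3 t}v_{\gamma'_4 t}^{*}\) the consecutive pairs \(v_{\gamma'_1 t}v_{\gamma'_2 t}^{*}\), \(v_{\gamma'_2 t}^{*}v_{\gamma'_3 t}\), \(v_{\gamma'_3 t}v_{\gamma'_4 t}^{*}\) are of the shapes \(vv^{*}\), \(v^{*}v\), \(vv^{*}\), so by the previous step each of them --- hence the whole sum --- vanishes as soon as \(\gamma'_1\ne\gamma'_2\), \(\gamma'_2\ne\gamma'_3\) or \(\gamma'_3\ne\gamma'_4\); and if \(\gamma'_1=\gamma'_2=\gamma'_3=\gamma'_4=c\), then \(v_{ct}v_{ct}^{*}v_{ct}v_{ct}^{*}=v_{ct}v_{ct}^{*}\) (partial isometry) and the sum equals \(\mathds{1}\). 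This is exactly \(\sum_{t}v_{\gamma'_1 t}v_{\gamma'_2 t}^{*}v_{\gamma'_3 t}v_{\gamma'_4 t}^{*}=\delta_{\gamma'_1\gamma'_2}\delta_{\gamma'_2\gamma'_3}\delta_{\gamma'_3\gamma'_4}\). For \(\vierpartwwbb=\circ\circ\bullet\bullet\) one argues verbatim, except that the transported relation (i) of \(R^{Sp}_{\scalebox{0.7}{\vierpartwwbb}}\) (together with \(\sum_{t}v_{ct}v_{ct}^{*}=\mathds{1}\), by the same positivity computation) now gives, besides the partial-isometry structure above, the stronger identity \(v_{cs}=v_{cs}v_{cs}v_{cs}^{*}\); hence \(v_{cs}v_{es}=v_{cs}v_{cs}(v_{cs}^{*}v_{es})=0\) for \(c\ne e\), which kills the shape-\(vv\) adjacency occurring in \(\sum_{t}v_{\gamma'_1 t}v_{\gamma'_2 t}v_{\gamma'_3 t}^{*}v_{\gamma'_4 t}^{*}\), while the constant case is again the transported relation itself.

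The step I expect to be the main obstacle is precisely this passage from ``controlled'' to ``arbitrary'' index tuples: \(R^{Gr}_{\scalebox{0.8}{\vierpartwbwb}}(v_G)\) quantifies over all \(\gamma'\in[N]^{4}\), whereas the evaluation technique behind Lemma \ref{lem:lemma2_relations_for_v_ij} only delivers the space relations for tuples with at most \(d\) distinct entries. The way around it is not to manufacture more evaluations but to reduce everything, as above, to the purely \(C^{*}\)-algebraic partial-isometry structure of the individual entries \(v_{cs}\), which is insensitive to \(d\); verifying carefully that this structure (orthogonality of the column families of \(v v^{*}\)'s and \(v^{*}v\)'s, and the identity \(v_{cs}=v_{cs}v_{cs}v_{cs}^{*}\) in the \(\vierpartwwbb\)-case) really does follow from the few transported relations is the heart of the matter.
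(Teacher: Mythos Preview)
Your argument for the \(\vierpartwbwb\) case is correct and essentially identical to the paper's: positivity forces each \(v_{cs}v_{cs}^{*}\) to be a projection, the resulting column families of projections are orthogonal, and the four-block relation drops out index by index.

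For the \(\vierpartwwbb\) case there is a genuine (though easily repaired) gap. Your route to \(v_{cs}v_{es}=0\) is \(v_{cs}v_{es}=v_{cs}v_{cs}(v_{cs}^{*}v_{es})\), which needs \(v_{cs}^{*}v_{es}=0\); you justify that by invoking ``the partial-isometry structure above''. But the positivity computation you cite, applied to \(\sum_t v_{ct}v_{ct}v_{ct}^{*}v_{ct}^{*}=\mathds{1}=\sum_t v_{ct}v_{ct}^{*}\), only yields \(v_{cs}(\mathds{1}-v_{cs}v_{cs}^{*})v_{cs}^{*}=0\), i.e.\ the identity \(v_{cs}=v_{cs}v_{cs}v_{cs}^{*}\); it does \emph{not} show that \(v_{cs}v_{cs}^{*}\) is idempotent, so at this point you do not yet have orthogonality of \(\{v_{cs}v_{cs}^{*}\}_{c}\) and hence not \(v_{cs}^{*}v_{es}=0\).

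The fix is to get \(v_{cs}v_{es}=0\) \emph{directly} from the stronger identity. From \(v_{cs}=v_{cs}v_{cs}v_{cs}^{*}\) one has \(v_{cs}(\mathds{1}-v_{cs}v_{cs}^{*})=0\), and since \(\mathds{1}-v_{cs}v_{cs}^{*}=\sum_{e\ne c}v_{es}v_{es}^{*}\ge 0\),
\[
0 \;=\; v_{cs}\Bigl(\sum_{e\ne c}v_{es}v_{es}^{*}\Bigr)v_{cs}^{*}\;=\;\sum_{e\ne c}(v_{cs}v_{es})(v_{cs}v_{es})^{*},
\]
a sum of positives, so each term vanishes and \(v_{cs}v_{es}=0\) for \(c\ne e\). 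Then \(v_{cs}^{*}v_{es}^{*}=0\) by adjoints, and \(v_{cs}v_{es}^{*}=v_{cs}v_{cs}(v_{cs}^{*}v_{es}^{*})=0\); these three vanishing patterns cover all adjacent pairs in \(\sum_t v_{\gamma'_1 t}v_{\gamma'_2 t}v_{\gamma'_3 t}^{*}v_{\gamma'_4 t}^{*}\). (If you still want the partial-isometry statement, it now follows a posteriori: \(v_{es}=v_{es}\sum_c v_{cs}^{*}v_{cs}=v_{es}v_{es}^{*}v_{es}\).) The paper does exactly this step, phrased via a faithful Hilbert-space representation; your purely \(C^{*}\)-algebraic positivity formulation is equivalent, you just invoked the wrong intermediate fact.
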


\begin{proof}
We prove the claim for \(d\!=\!1\), then it holds for all \(d\!\le\!N\).
We start with the partition {\vierpartwbwb}. Together with \(\paarpartwb\!\in\!\Pi\) we already know by Theorem \ref{thm:hom_from_PQS_to_QSymG} that 
\[\sum_{t'=1}^{N}\underbrace{v_{st'}v_{st'}^*v_{st'}v_{st'}^*}_{\le v_{st'}v_{st'}^*}=\mathds{1}=\sum_{t'=1}^N v_{st'}v_{st'}^*\quad,\quad\forall s\!\in\![N].\]

But this is only possible if \(v_{st'}v_{st'}^*v_{st'}v_{st'}^*\!=\! v_{st'}v_{st'}^*\) for all \(s,t'\in[N]\).
Hence, all generators \(v_{st'}\) are partial isometries.
Now, we also know \(\displaystyle\sum_{s}v_{st'}^*v_{st'}\!=\!\sum_{s}v_{st'}v_{st'}^*\!= \!\mathds{1}\) by Theorem \ref{thm:hom_from_PQS_to_QSymG}.
Hence \(v_{s_2t'}^*v_{s_1t'}\!=\!v_{s_2t'}v_{s_1t'}^*\!=\!0\) for \(s_2\!\neq\!s_1\), since projections summing up to one are mutually orthogonal.
This implies

%
\[\sum_{t'=1}^{N}v_{\gamma'_1t'}v_{\gamma'_2t'}^*v_{\gamma'_3t'}v_{\gamma'_4t'}^*=0\]
for all \(\gamma'\!\in\!T'_0\), which is the only relation in \(R^{Gr}_{\scalebox{0.8}{\vierpartwbwb}}(v_G)\) not already covered by the results in Theorem \ref{thm:hom_from_PQS_to_QSymG}.\vspace{11pt}

For the situation \(\vierpartwwbb\!\in\!\Pi\) we may assume that \(C(G)\) is represented faithfully on some Hilbert space \(H\).
As in the previous considerations, we get \(v_{st'}v_{st'}v_{st'}^*v_{st'}^*= v_{st'}v_{st'}^*\).
Thus, for any \(w\!\in\!H\)
\[
\norm{v_{s_1t'}^*v_{s_1t'}^*w}^2
=\langle v_{s_1t'}^*w,v_{s_1t'}^*w\rangle
=\langle\sum_{s_2=1}^N v_{s_2t'}v_{s_2t'}^*v_{s_1t'}^*w,v_{s_1t'}^*w\rangle
=\sum_{t_2=1}^N\norm{v_{s_2t'}^*v_{s_1t'}^*w}^2,
\]
which implies \(v_{s_2t'}^*v_{s_1t'}^*\!=\!v_{s_2t'}v_{s_1t'}\!=\!0\) for \(s_1\!\neq\!s_2\).
But then we also have
\[v_{s_2t'}v_{s_1t'}^*=\sum_{s=1}^{N}v_{s_2t'}v_{st'}v_{st'}^*v_{s_1t'}^*=0\]
and likewise \(v_{s_2t'}^*v_{s_1t'}=0\) for all \(t'\) and \(s_2\!\neq\!s_1\).
With this result it holds
\[\sum_{t'=1}^{N}v_{\gamma'_1t'}v_{\gamma'_2t'}v_{\gamma'_3t'}^*v_{\gamma'_4t'}^*=0\]
for all \(\gamma'\!\in\!T'_0\), which is the only relation in \(R'_{\scalebox{0.8}{\vierpartwwbb}}(v_G)\) to be proved.
\end{proof}

\begin{rem}\label{rem:four_block_implies_remaining_mcpp_relations}
Note that we were able to deduce \(v_{s_2t'}v_{s_1t'}^*=v_{s_2t'}^*v_{s_1t'}=0\) for \(s_2\!\neq\!s_1\) in the situation of both four-block partitions. As we can repeat the whole proof with \(v_{ij}\) replaced by \(v_{ji}\), see Theorem  \ref{thm:hom_from_PQS_to_QSymG}, we also have \(v_{t's_2}v_{t's_1}^*=v_{t's_2}^*v_{t's_1}=0\). So the non-diagonal entries of \(v_Gv_G^*\), \(v_G^*v_G\), \(\bar{v}_G\bar{v}_G^*\) and \(\bar{v}_G^*\bar{v}_G\) vanish. 
This is insofar a nontrivial result, as Theorem \ref{thm:hom_from_PQS_to_QSymG} together with the fact \(\{\paarpartwb,\paarpartbw,\baarpartwb,\baarpartbw\}\!\in\!\Pi\) only implies that the diagonals are equal to \(\mathds{1}\). At first side we did not know anything about the off-diagonals, i.e. it was unclear  if \(v_G\) and \(v_G^T\) are unitaries.

Note further, that if we additionally assume \(\paarpartww\!\in\!\Pi\), then also \(R_{\scalebox{0.7}{\(\paarpartww\)}}^{Gr}(v_G)\) are fulfilled, i.e. \(v_G\) is orthogonal, \(v_G^Tv_G=v_Gv_G^T=\mathds{1}\): The diagonals are \(\mathds{1}\) by Theorem \ref{thm:hom_from_PQS_to_QSymG}, and for the off-diagonals we can compute for \(k_1\!\neq\!k_2\)
\[\sum_{i=1}^{N}v_{ik_1}v_{ik_2}=\sum_{i=1}^{N}\sum_{k=1}^{N}v_{ik_1}v_{ik}^*v_{ik}^*v_{ik_2}=0\]
and likewise for the off-diagonals of \(v_Gv_G^T\).

\end{rem}

The next three lemmata are just preparing results. Recall (see \cite{banicaspeicherliberation,weber_LNM,weber_PMS}) that the easy quantum groups \(O_N^+\) and \(B_N^+\) can be associated to the following orthogonal \(N\!\times\!N\)-matries of generators and partitions:
\begin{align*}
O_N^+:&\quad u_{O_N^+}=(o_{ij})\quad;\quad\quad\Pi\!=\!\{\textnormal{non-crossing partitions with blocks of size 2}\}\\
B_N^+:&\quad u_{B_N^+}=(b_{ij})\quad;\quad\quad\Pi\!=\!\{\textnormal{non-crossing partitions with blocks of size 1 or 2}\}
\end{align*}

\begin{lem}\label{lem:dimension_of_subspaces_(O_2^+_and_B_3^+)}
\begin{itemize}
\item[(i)]The entries of the corepresentation matrix
\[u_{B_3^+}^{\otimes 2}=\sum_{m,n,s,t=1}^{3}b_{mn}b_{st}\otimes E_{mn}\otimes E_{st}\] 
linearly generate a vector space \(V\) of dimension 14.
\item[(ii)]The entries of the corepresentation matrix
\[u_{O_2^+}^{\otimes 2}=\sum_{m,n,s,t=1}^{2}o_{mn}o_{st}\otimes E_{mn}\otimes E_{st}\]
linearly generate a vector space \(W\) of dimension 10.
\end{itemize}
\end{lem}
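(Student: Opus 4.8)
The plan is to invoke Woronowicz's Peter--Weyl theorem for compact matrix quantum groups (see \cite{neshveyevtuset,timmermann}): if a corepresentation $w$ decomposes into irreducibles as $w\cong\bigoplus_i\pi_i$, then the linear span of its matrix coefficients inside $C(G)$ equals $\bigoplus_\pi C(\pi)$, where $\pi$ ranges over the \emph{distinct} irreducible corepresentations occurring in $w$ and $C(\pi)$ is the space of matrix coefficients of $\pi$; since $\dim C(\pi)=(\dim\pi)^2$ and distinct $C(\pi)$ are linearly independent, the span has dimension $\sum_\pi(\dim\pi)^2$. So both assertions reduce to determining the distinct irreducible constituents of $u_{O_2^+}^{\otimes 2}$ and $u_{B_3^+}^{\otimes 2}$ together with their dimensions, which I would do with the partition calculus: recall from Tannaka--Krein duality that for an easy quantum group every intertwiner space $\mathrm{Hom}(w_1,w_2)$ is \emph{spanned} by the maps $T_p$ attached to the relevant partitions, and that compressing such a map by a morphism again yields a morphism.

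For $O_2^+$ (category $NC_2$): first, $u:=u_{O_2^+}$ is irreducible since $NC_2(1,1)$ contains only the identity partition. The space $\mathrm{End}(u^{\otimes 2})$ is spanned by the $T_p$ for the two partitions in $NC_2(2,2)$, namely the identity (the operator $\mathrm{id}$ on $(\C^2)^{\otimes 2}$, of rank $4$) and the double cap/cup (the rank-one operator $|\zeta\rangle\langle\zeta|$ with $\zeta=e_1\otimes e_1+e_2\otimes e_2$); these are linearly independent, so $\dim\mathrm{End}(u^{\otimes 2})=2$. As $\zeta$ is a nonzero invariant vector, the trivial corepresentation $1$ occurs in $u^{\otimes 2}$, hence $u^{\otimes 2}\cong 1\oplus s$ with $s$ irreducible of dimension $4-1=3$. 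Peter--Weyl then gives $\dim W=1^2+3^2=10$.

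For $B_3^+$ (category $NC_{1,2}$): let $\xi=e_1+e_2+e_3$. The singleton partition provides a morphism $T\colon 1\to u$ with $T(1)=\xi$, so $TT^*\in\mathrm{End}(u)$ is a nonzero multiple of the projection $p$ onto $\C\xi$; thus $p$ and $q:=\mathrm{id}-p$ lie in $\mathrm{End}(u)$, and $u:=u_{B_3^+}\cong 1\oplus v$, where $v$ is the subcorepresentation on $\xi^\perp=q(\C^3)$, of dimension $2$. Since $NC_{1,2}(1,1)$ consists only of the identity and the double singleton, $\mathrm{End}(u)=\operatorname{span}\{\mathrm{id},p\}$, so the compression $\mathrm{End}(v)=q\,\mathrm{End}(u)\,q=\operatorname{span}\{q,0\}$ is one-dimensional; hence $v$ is irreducible and $u\cong 1\oplus v$ with $1\not\cong v$. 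Likewise $\mathrm{End}(v^{\otimes 2})$ is the $(q\otimes q)$-compression of $\operatorname{span}\{T_p:p\in NC_{1,2}(2,2)\}$; every $p$ with a singleton block compresses to $0$ because $q\xi=0$, and of the two perfect matchings in $NC_{1,2}(2,2)$ the identity compresses to $\mathrm{id}_{v^{\otimes 2}}$ while the double cap/cup compresses to $|(q\otimes q)\zeta\rangle\langle(q\otimes q)\zeta|$ with $\zeta=\sum_i e_i\otimes e_i$, which is nonzero since $\|(q\otimes q)\zeta\|^2=\|q\|_{\mathrm{HS}}^2=\operatorname{rank}(q)=2$. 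So $\dim\mathrm{End}(v^{\otimes 2})=2$, and as $(q\otimes q)\zeta$ is a nonzero invariant vector for $v^{\otimes 2}$, we get $v^{\otimes 2}\cong 1\oplus w$ with $w$ irreducible of dimension $3$. Consequently $u^{\otimes 2}\cong(1\oplus v)^{\otimes 2}\cong 2\cdot 1\ \oplus\ 2v\ \oplus\ w$, whose distinct constituents are $1,v,w$ of dimensions $1,2,3$ (pairwise inequivalent since the dimensions differ), and Peter--Weyl gives $\dim V=1^2+2^2+3^2=14$.

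The point requiring genuine care is exactly the two endomorphism-space computations at the boundary dimensions $N=2$ and $N=3$: a priori the maps $T_p$ could degenerate further there, so one must verify $\dim\mathrm{End}(u^{\otimes 2})=2$ (respectively $\dim\mathrm{End}(v^{\otimes 2})=2$) by the explicit rank and norm estimates above rather than by merely counting partitions, and one must correctly track which non-crossing partitions survive the compression to the reduced representation $v$ in the $B_3^+$ case. Everything else --- the resulting decompositions of $u^{\otimes 2}$ and the final arithmetic --- is then immediate; alternatively, one can replace these computations by a citation to the known description of the representation categories of $O_N^+$ and $B_N^+$ in the easy-quantum-group literature, from which the decompositions of $u^{\otimes 2}$ can be read off directly.
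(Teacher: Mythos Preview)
Your proof is correct and follows the same conceptual route as the paper's: both identify the irreducible constituents of $u^{\otimes 2}$ and then sum the squares of their dimensions via Peter--Weyl. The paper phrases this in the Freslon--Weber language of projective partitions, citing \cite{freslonweber,freslon} for the indexing of irreducibles by projective partitions $p$ and using the recursive rank formula $\textnormal{rank}(P_p)=N^{tb(p)}-\sum_{q\prec p}\textnormal{rank}(P_q)$ to obtain the dimensions $3,2,1$ (for $B_3^+$) and $3,1$ (for $O_2^+$). You arrive at exactly the same decompositions $u_{B_3^+}^{\otimes 2}\cong 2\cdot 1\oplus 2v\oplus w$ and $u_{O_2^+}^{\otimes 2}\cong 1\oplus s$ by a direct intertwiner computation; this is more self-contained, since you explicitly verify the endomorphism dimensions at the boundary values $N=2,3$ (where linear independence of the $T_p$ is not automatic) and check that the relevant compressed maps are nonzero, rather than invoking an external framework. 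The paper's version is shorter given the citation; yours stands on its own.
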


\begin{proof}
One can deduce these results from the fusion rules for \(O_N^ +\) and \(B_N^+\) as described in \cite{freslonweber} and \cite{freslon}.
We use the notation introduced there and just sketch the main argument.
The vector space \(V\) is spanned by the (linearly independent) entries of the corepresentation matrices \(u_{\scalebox{0.5}{\idpartww\idpartww}}\), \(u_{\scalebox{0.5}{\idpartsingletonww\idpartww}}\) and \(u_{\scalebox{0.5}{\paarbaarpartwwww}}\).
The size of the matrix \(u_p\) (for a fixed projective partition \(p\)) is given by the rank of the projection
\[P_p:=T_p-\bigvee_{\substack{q\prec p\\[3pt]q\in \mathcal{C}_{B_N^+}}}T_q\]
and we have the formulas
\begin{align*}
\textnormal{rank}(T_p)&=N^{tb(p)}\\
\textnormal{rank}(P_p)&=\textnormal{rank}(T_p)- \sum_{\substack{q\prec p\\[3pt]q\in \mathcal{C}_{B_N^+}}}\textnormal{rank}(P_q)
\end{align*}
where \(\mathcal{C}_{B_N^+}\) is the category of partitions associated to \(B_N^+\) and \(\{q\;|\;q\prec p\}\) is the set of projective partitions over the same points as \(p\), but strictly smaller than \(p\).
Recursively, we therefore have with \(N\!=\!3\)
\begin{align*}
\dim(V)=&\left(\textnormal{rank}\left(P_{\scalebox{0.5}{\idpartww\idpartww}}\right)\right)^2+\left(\textnormal{rank}\left(P_{\scalebox{0.5}{\idpartsingletonww\idpartww}}\right)\right)^2+\left(\textnormal{rank}\left(P_{\scalebox{0.5}{\paarbaarpartwwww}}\right)\right)^2\\
=&\left(3^2
-\textnormal{rank}(P_{\scalebox{0.5}{\idpartsingletonww\idpartww}})
-\textnormal{rank}(P_{\scalebox{0.5}{\idpartww\idpartsingletonww}})
-\textnormal{rank}(P_{\scalebox{0.5}{\idpartsingletonww\idpartsingletonww}})
-\textnormal{rank}(P_{\scalebox{0.5}{\paarbaarpartwwww}})\right)^2\\
&+\left(3-\textnormal{rank}(P_{\scalebox{0.5}{\idpartsingletonww\idpartsingletonww}})\right)^2+1^2\\
=&\big(3^2-(3-1)-(3-1)-1-1\big)^2+(3-1)^2+1\\
=&14.
\end{align*}
 
 Analogously we have for \(O_2^+\) the result
 \begin{align*}
\dim(W)=&\left(\textnormal{\textnormal{rank}}\left(P_{\scalebox{0.5}{\idpartww\idpartww}}\right)\right)^2+\left(\textnormal{rank}\left(P_{\scalebox{0.5}{\paarbaarpartwwww}}\right)\right)^2\\
=&\left(2^2
-\textnormal{rank}(P_{\scalebox{0.5}{\paarbaarpartwwww}})\right)^2+1^2\\
=&10.
\end{align*}
\end{proof}

Note that for different easy quantum groups \(G\) the symbols \(P_p\) and \(u_p\), respectively, have different meanings as their definition depends on the considered category \(\mathcal{C}_{G}\) of the relevant quantum group.

\begin{lem}\label{lem:linear_independence_xy,yx_case_O_2^+}
Consider the matrix \(u_{O_2^ +}=(o_{ij})\) of the canonical generators of \(C(O_2^ +)\). Then \(o_{11}o_{21}\) and \(o_{21}o_{11}\) are linearly independent.
\end{lem}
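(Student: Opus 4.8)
The plan is to reduce the statement to the dimension count in Lemma~\ref{lem:dimension_of_subspaces_(O_2^+_and_B_3^+)}(ii) by a piece of elementary linear algebra. Recall (see the discussion preceding that lemma) that $C(O_2^+)$ is the universal $C^*$-algebra generated by self-adjoint elements $o_{ij}$, $1\le i,j\le 2$, such that $u_{O_2^+}=(o_{ij})$ is orthogonal; concretely, the pair partitions in the category of $O_2^+$ impose
\[\sum_{k=1}^{2}o_{ik}o_{jk}=\delta_{ij}\qquad\text{and}\qquad\sum_{k=1}^{2}o_{ki}o_{kj}=\delta_{ij}\qquad(1\le i,j\le 2),\]
together with $o_{ij}^*=o_{ij}$. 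I want to play these relations against the fact, supplied by Lemma~\ref{lem:dimension_of_subspaces_(O_2^+_and_B_3^+)}(ii), that the sixteen products $o_{mn}o_{st}$ ($m,n,s,t\in\{1,2\}$), which are exactly the entries of $u_{O_2^+}^{\otimes 2}$, span a vector space of dimension $10$.

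First I would use the orthogonality relations to rewrite every one of the sixteen monomials $o_{mn}o_{st}$ as a linear combination of the ten elements
\[\mathcal{B}:=\{\mathds{1},\,o_{12}^2,\,o_{11}o_{12},\,o_{12}o_{11},\,o_{11}o_{21},\,o_{21}o_{11},\,o_{11}o_{22},\,o_{22}o_{11},\,o_{12}o_{21},\,o_{21}o_{12}\}.\]
Indeed, putting $i=j$ in the first family of relations expresses $o_{11}^2,o_{21}^2,o_{22}^2$ through $\mathds{1}$ and $o_{12}^2$ (for instance $o_{11}^2=\mathds{1}-o_{12}^2=o_{22}^2$ and, using also the second family, $o_{21}^2=o_{12}^2$), while putting $i\neq j$ in the two families yields $o_{12}o_{22}=-o_{11}o_{21}$, $o_{22}o_{12}=-o_{21}o_{11}$, $o_{21}o_{22}=-o_{11}o_{12}$ and $o_{22}o_{21}=-o_{12}o_{11}$; the remaining nine of the sixteen monomials already lie in $\mathcal{B}$. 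Hence $\operatorname{span}\{o_{mn}o_{st}:m,n,s,t\in\{1,2\}\}\subseteq\operatorname{span}\mathcal{B}$. Conversely $\mathds{1}=o_{11}^2+o_{12}^2$ and the other nine members of $\mathcal{B}$ are themselves such monomials, so the two spans coincide.

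Now I would invoke Lemma~\ref{lem:dimension_of_subspaces_(O_2^+_and_B_3^+)}(ii): this common span has dimension $10=|\mathcal{B}|$, so the ten elements of $\mathcal{B}$ must be linearly independent. In particular $o_{11}o_{21}$ and $o_{21}o_{11}$ are two distinct members of $\mathcal{B}$ and are therefore linearly independent, which is the claim. I do not anticipate a genuine obstacle; the only point demanding care is the bookkeeping in the middle step, namely checking that the reduction of all sixteen monomials produces a spanning set with \emph{exactly} ten elements (and not fewer), so that the dimension equality from Lemma~\ref{lem:dimension_of_subspaces_(O_2^+_and_B_3^+)}(ii) forces linear independence rather than only an inequality. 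As an alternative route one could observe that, after substituting $o_{12}o_{22}=-o_{11}o_{21}$ and $o_{22}o_{12}=-o_{21}o_{11}$, the elements $o_{11}o_{21}$ and $o_{21}o_{11}$ are, up to the scalar $\tfrac{1}{\sqrt2}$, two distinct matrix coefficients of the three\nobreakdash-dimensional irreducible subrepresentation of $u_{O_2^+}^{\otimes 2}$ whose coefficients span a $9$\nobreakdash-dimensional space by the same lemma, and hence are linearly independent.
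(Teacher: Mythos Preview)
Your proof is correct and follows essentially the same approach as the paper's: both arguments reduce the claim to the dimension count $\dim W=10$ from Lemma~\ref{lem:dimension_of_subspaces_(O_2^+_and_B_3^+)}(ii) via the orthogonality relations among the $o_{ij}$. The only difference is packaging: the paper argues by contradiction (assuming colinearity, then using the row/column/transpose symmetries of $O_2^+$ to propagate it to three further pairs and obtain a spanning set of size~$8<10$), whereas you work directly, exhibiting a $10$\nobreakdash-element spanning set $\mathcal{B}$ which must then be a basis. Your direct route avoids invoking the symmetry automorphisms and is a bit cleaner.
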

\begin{proof}
Assume \(o_{11}o_{21}\) and \(o_{21}o_{11}\) are colinear. Switching columns or rows of \(u_{O_2^+}=(o_{ij})\) as well as taking the transpose defines isomorphisms of \(C(O_2^+)\), so we also have that the pairs \((o_{11}o_{12},o_{12}o_{11})\), \((o_{21}o_{22},o_{22}o_{21})\) and \((o_{12}o_{22},o_{22}o_{12})\) are each colinear. Together with the orthogonality of \(u_{O_2^+}\), i.e. \(\sum_{i=1}^{2}o_{ik_1}o_{ik_2}=\sum_{i=1}^{2}o_{k_1i}o_{k_2i}=\delta_{k_1k_2}\) for all \(k_1,k_2\!\in\!\{1,2\}\), this would imply that
\[\big(o_{11}o_{11}\,,\,o_{12}o_{12}\,,\,o_{11}o_{12}\,,\,o_{11}o_{21}\,,\,o_{11}o_{22}\,,\,o_{22}o_{11}\,,\,o_{12}o_{21}\,,\,o_{21}o_{12}\big)\]
is a generating system for the vector space \(W\) from Lemma \ref{lem:dimension_of_subspaces_(O_2^+_and_B_3^+)}, contradicting \(\dim(W)=10\).
\end{proof}

\begin{lem}\label{lem:linear_independence_xy,yx_case_B_3^+}
Consider the matrix \(u_{B_3^ +}=(b_{ij})\) of the canonical generators of \(C(B_3^ +)\).
Then \(b_{11}b_{21}\) and \(b_{21}b_{11}\) are linearly independent.
\end{lem}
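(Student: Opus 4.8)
The plan is to argue exactly as in the proof of Lemma~\ref{lem:linear_independence_xy,yx_case_O_2^+}, but now feeding in part~(i) of Lemma~\ref{lem:dimension_of_subspaces_(O_2^+_and_B_3^+)}, i.e.\ that the entries of $u_{B_3^+}^{\otimes 2}$ span a $14$-dimensional subspace $V$ of $C(B_3^+)$. I would exhibit an explicit spanning set of $V$ consisting of $14$ elements, among which both $b_{11}b_{21}$ and $b_{21}b_{11}$ occur. Since a spanning set of size $14$ of a $14$-dimensional space is a basis, $b_{11}b_{21}$ and $b_{21}b_{11}$ are then linearly independent, which is the claim.

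To cut down the generators I would first use that $u_{B_3^+}$ is bistochastic, $\sum_{k}b_{ik}=\sum_{k}b_{ki}=\mathds{1}$. This gives $b_{i3}=\mathds{1}-b_{i1}-b_{i2}$, $b_{3j}=\mathds{1}-b_{1j}-b_{2j}$ and $b_{33}=b_{11}+b_{12}+b_{21}+b_{22}-\mathds{1}$, so every $b_{mn}$ is an affine combination of $\mathds{1},b_{11},b_{12},b_{21},b_{22}$. Hence every product $b_{mn}b_{st}$, and therefore all of $V$, lies in the span of the $21$ elements $\mathds{1}$, $b_{11},b_{12},b_{21},b_{22}$, and the $16$ products $b_{ij}b_{kl}$ with $i,j,k,l\in\{1,2\}$.

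Next I would bring in orthogonality of $u_{B_3^+}$, namely $\sum_{m}b_{mn_1}b_{mn_2}=\sum_{m}b_{n_1m}b_{n_2m}=\delta_{n_1n_2}\mathds{1}$. After substituting the affine expressions for the entries in row and column $3$, the column and row versions of these identities for the index pairs $(1,1),(2,2),(1,2),(2,1)$ become quadratic relations among $\mathds{1},b_{11},b_{12},b_{21},b_{22}$ and their pairwise products. Solving these in a triangular order -- the row-$1$ squared relation for $b_{12}^2$, the column-$1$ squared relation for $b_{21}^2$, the two mixed column relations for $b_{21}b_{22}$ and $b_{22}b_{21}$, the two mixed row relations for $b_{12}b_{22}$ and $b_{22}b_{12}$, and finally the column-$2$ squared relation for $b_{22}^2$ -- eliminates those seven products, each expressed through the remaining elements and the products eliminated earlier. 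What is left is a spanning set of $V$ given by the fourteen elements
\[\mathds{1},\ b_{11},\ b_{12},\ b_{21},\ b_{22},\ b_{11}^2,\ b_{11}b_{12},\ b_{12}b_{11},\ b_{11}b_{21},\ b_{21}b_{11},\ b_{11}b_{22},\ b_{22}b_{11},\ b_{12}b_{21},\ b_{21}b_{12}.\]
By Lemma~\ref{lem:dimension_of_subspaces_(O_2^+_and_B_3^+)}(i) this is then a basis, so in particular $b_{11}b_{21}$ and $b_{21}b_{11}$ are linearly independent.

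I expect the only genuine work to be the bookkeeping in the third paragraph: one must choose the order of elimination so that the seven orthogonality relations really form a triangular system with no circular substitutions, and, crucially, so that neither $b_{11}b_{21}$ nor $b_{21}b_{11}$ is ever the product being eliminated. The same computation can equivalently be packaged as a contradiction in the style of Lemma~\ref{lem:linear_independence_xy,yx_case_O_2^+}: if $b_{11}b_{21}$ and $b_{21}b_{11}$ were colinear, then $b_{21}b_{11}$ could be discarded from the fourteen-element list above -- or, invoking the row/column-permutation and transposition automorphisms of $C(B_3^+)$, also $b_{12}b_{11}$ -- leaving a spanning set of $V$ of size at most $13$, which contradicts $\dim V=14$.
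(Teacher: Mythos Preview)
Your proposal is correct and follows essentially the same approach as the paper: reduce the $21$ words $\mathds{1},b_{ij},b_{ij}b_{kl}$ ($i,j,k,l\in\{1,2\}$) using the bistochastic and orthogonality relations of $u_{B_3^+}$ and compare the resulting spanning set with $\dim V=14$ from Lemma~\ref{lem:dimension_of_subspaces_(O_2^+_and_B_3^+)}(i). The only organisational difference is that the paper argues by contradiction, using the row/column-permutation and transposition automorphisms to drop four colinear products and then only four orthogonality relations to reach $13<14$, whereas you exhibit a $14$-element basis directly via seven orthogonality relations; as you note yourself, the two packagings are equivalent.
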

\begin{proof}
The proof is similar to that of Lemma \ref{lem:linear_independence_xy,yx_case_O_2^+}.
Recall that \(b_{i3}\!=\!\mathds{1}-b_{i1}-b_{i2}\) and \(b_{3j}\!=\!\mathds{1}-b_{1j}-b_{2j}\), see \cite{banicaspeicherliberation,tarragoweberclassificationunitaryQGs,weber_LNM,weber_PMS}.
Thus, the elements \(b_{i3}\) and \(b_{3j}\) won't play a role in our proof.
Assume colinearity of \(b_{11}b_{21}\) and \(b_{21}b_{11}\).  Note again, that this implies further colinearities, namely of the pairs \((b_{11}b_{12},b_{12}b_{11})\), \((b_{21}b_{22},b_{22}b_{21})\) and \((b_{12}b_{22},b_{22}b_{12})\).
We observe now that the vector space \(V\) in Lemma \ref{lem:dimension_of_subspaces_(O_2^+_and_B_3^+)} is spanned by \(B:=\{\mathds{1},b_{11},\ldots,b _{22},b_{11}b_{11},\ldots, b_{22}b_{22}\}\), i.e. all 21 products of length at most two we can produce with the letters \(\{b_{11},b_{12},b_{21},b_{22}\}\).
Assuming the above, \(V\) is already linearly spanned by \(B\backslash\{b_{12}b_{11}, b_{21}b_{11}, b_{22}b_{21}, b_{22}b_{12}\}\).\newline
From \(\displaystyle\sum_{i}b_{i1}b_{i2}\!=\!0\) and the fact that each row and column in \(u_{B_3^+}\) sums up to \(\mathds{1}\) we can deduce
\[b_{21}b_{22}=-b_{11}b_{12}-b_{31}b_{32}=-b_{11}b_{12}-(\mathds{1}-b_{21}-b_{11})(\mathds{1}-b_{22}-b_{12})\]
and finally
\begin{equation}
2 b_{21}b_{22}=-2 b_{11}b_{12}-\mathds{1}-b_{11}b_{22}-b_{21}b_{12}+b_{11}+b_{12}+b_{21}+b_{22}
\end{equation}
Similarly, we can start with \(\displaystyle\sum_{i}b_{1i}b_{2i}=0\) or \(\displaystyle\sum_{i}b_{1i}b_{1i}=\sum_{i}b_{2i}b_{2i}=\mathds{1}\) to find
\begin{align}
2 b_{12}b_{22}&=-2 b_{11}b_{21}-\mathds{1}-b_{11}b_{22}-b_{12}b_{21}+b_{11}+b_{12}+b_{21}+b_{22}\\
2 b_{11}^2&=-2 b_{12}^2+2b_{11}+2b_{12}-2b_{11}b_{12}\\
2 b_{22}^2&=-2 b_{21}^2+2b_{22}+2b_{21}-2b_{21}b_{22}
\end{align}

This estimates the vector space dimension of \(V\) to at most 13, contradicting \(\textnormal{dim}(V)=14\).
\end{proof}

\begin{notation}\label{not:hom_from_PQS_to_M-version_of_smaller_BSQG}
One can show that for \(d\!\le\!M\!\le\!N\) we can always map a partition quantum space \(X_{N,d}(\Pi)\) on ``its shorter M-version'' \(X_{M,d}(\Pi)\):
\[\psi_1:C\big(X_{N,d}(\Pi)\big)\rightarrow C\big(X_{M,d}(\Pi)\big);\;x_{ij}\mapsto\begin{cases}x_{ij}'&\textnormal{, }j\!\le\!M\\0&\textnormal{, }j\!>\!M\end{cases}\]

Additionally, by Theorem \ref{thm:hom_from_PQS_into_BSQG_d-vector_case}, we always have a unital \(^*\)-homomorphism
\[\psi_2: C\big(X_{M,d}(\Pi)\big)\rightarrow C\big(G_M(\Pi')\big);x_{ij}\mapsto u_{ij}\]
whenever \(G_M(\Pi')\) is a subgroup of \(G_M(\Pi)\).
Composing \(\psi_1\) and \(\psi_2\) gives a unital \(^*\)-homomorphism \(\psi_{G_M(\Pi')}:C\big(X_{N,d}(\Pi)\big)\rightarrow C\big(G_M(\Pi')\big)\).
\end{notation}

\begin{lem}\label{lem:mcpp_relations_are_fulfilled_pairing_case}
Consider the situation of Notation \ref{not:fixing_notations} with \(d\!\le\!N\) arbitrary. If \(\Pi\) contains only non-crossing partitions with blocks of size two, then \(R^{Gr}_p(v_G)\) is fulfilled for every \(p\!\in\{\paarpartwb,\paarpartbw,\baarpartwb,\baarpartbw\}\). If \(\Pi\) contains \(\paarpartww\), then also \(R^{Gr}_{\scalebox{0.7}{\paarpartww}}(v_G)\) holds.
\end{lem}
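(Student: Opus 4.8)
The plan is to reduce both assertions to two ``one-sided'' matrix identities and then upgrade these. It suffices to establish $v_G^T\bar v_G=\mathds 1$ (for the first assertion) and, when $\paarpartww\in\Pi$, $v_G^Tv_G=\mathds 1$ (for the second). Indeed, in a CMQG the matrices $v_G$ and $\bar v_G$ are invertible over $C(G)$ (Definition \ref{defn:CMQG}(ii)), so a one-sided inverse of $v_G$ resp.\ $\bar v_G$ is automatically two-sided: $v_G^T\bar v_G=\mathds 1$ then forces $v_G^*v_G=v_Gv_G^*=\mathds 1$ and, applying entrywise adjoints, $\bar v_G^*\bar v_G=\bar v_G\bar v_G^*=\mathds 1$, which by the remarks after Definition \ref{defn:BSQG} is exactly $R^{Gr}_p(v_G)$ for all four mixed pair partitions; likewise $v_G^Tv_G=\mathds 1$ forces $v_Gv_G^T=\mathds 1$, i.e.\ $R^{Gr}_{\scalebox{0.7}{\paarpartww}}(v_G)$.

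For $d\ge 2$ the two identities are immediate. Each of the partitions $\paarpartwb,\paarpartbw,\baarpartwb,\baarpartbw,\paarpartww$ has at most two points in every row, so every column-index tuple occurring in the relations $R^{Sp}_p$ of Definition \ref{defn:quantum_space_relations_for_the_x_ij} has at most two, hence at most $d$, distinct entries. By Theorem \ref{thm:hom_from_PQS_to_QSymG} together with Remark \ref{rem:new_remark_to_lemma2_relations_for_v_ij} the relations $R^{Sp}_{\scalebox{0.7}{\baarpartwb}}(x)$ and $R^{Sp}_{\scalebox{0.7}{\paarpartww}}(x)$ then hold for the generators $v_{ij}$ with the restriction to $[d]$ dropped, and reading them off gives $\sum_a v_{ak}v_{al}^*=\delta_{kl}$ and $\sum_a v_{ak}v_{al}=\delta_{kl}$ for all $k,l\in[N]$, i.e.\ $v_G^T\bar v_G=\mathds 1$ and $v_G^Tv_G=\mathds 1$.

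The case $d=1$ is the main obstacle, since there the relations of type (ii) in Definition \ref{defn:quantum_space_relations_for_the_x_ij} are vacuous and the transport above yields only the diagonal entries. Here I would use the characters of $C\big(X_{N,1}(\Pi)\big)$. Since $\{\paarpartwb,\paarpartbw,\baarpartwb,\baarpartbw\}\subseteq\Pi$, the defining relations contain $\sum_i x_ix_i^*=\sum_i x_i^*x_i=\mathds 1$, so every character has the form $\chi_\xi\colon x_i\mapsto\xi_i$ for a unit vector $\xi\in\C^N$; if $\paarpartww\notin\Pi$ all such $\xi$ occur. For each such $\xi$ the composite $(\mathrm{id}\otimes\chi_\xi)\circ\alpha$, where $\alpha$ is the left matrix-vector action of $G$ on $X_{N,1}(\Pi)$, is a unital $^*$-homomorphism $C\big(X_{N,1}(\Pi)\big)\to C(G)$ sending $x_i$ to $\sum_k v_{ik}\xi_k$; applying it to $\sum_i x_ix_i^*=\mathds 1$ and subtracting $\sum_{k,l}\xi_k\bar\xi_l\,\delta_{kl}=\mathds 1$ shows that the matrix $M:=v_G^T\bar v_G-\mathds 1$ over $C(G)$ satisfies $\sum_{k,l}\xi_k\bar\xi_l\,M_{kl}=0$ for every $\xi\in\C^N$; testing successively with $\xi=e_a$, $\xi=e_a+e_b$ and $\xi=e_a+ie_b$ forces $M=0$, i.e.\ $v_G^T\bar v_G=\mathds 1$. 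If instead $\paarpartww\in\Pi$, then $\langle\Pi\rangle$ contains $\idpartbw$ (a single rotation of $\paarpartww$), so — propagating the space relations from $\Pi$ to $\langle\Pi\rangle$ as in Lemmas \ref{lem:first_column-relations} and \ref{lem:relations_on_generators_imply_relations_for_category} — the generators satisfy $x_i=x_i^*$ and $\sum_i x_i^2=\mathds 1$, and the characters are then exactly the real unit vectors; applying $(\mathrm{id}\otimes\chi_\xi)\circ\alpha$ to $x_i=x_i^*$ gives $\sum_k(v_{ik}-v_{ik}^*)\xi_k=0$ for all real $\xi$, hence $v_{ik}=v_{ik}^*$, while applying it to $\sum_i x_i^2=\mathds 1$ gives $\sum_{k,l}\xi_k\xi_l\,(v_G^Tv_G-\mathds 1)_{kl}=0$ for all real $\xi$, and since $v_G^Tv_G-\mathds 1$ is a symmetric matrix this forces $v_G^Tv_G=\mathds 1$; combined with $\bar v_G=v_G$ this also yields $v_G^T\bar v_G=\mathds 1$.

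In every case the first paragraph now completes the proof. The step I expect to be hardest is the $d=1$ analysis above, and in particular the colour dichotomy inside it: as soon as a relation making $\sum_i x_i^2$ ``visible'' is present (in particular when $\paarpartww\in\Pi$), only the real unit vectors remain available as characters, so the Hermitian-form argument for $v_G^T\bar v_G-\mathds 1$ has to be replaced by the symmetric-form argument for $v_G^Tv_G-\mathds 1$ together with the self-adjointness $v_{ik}=v_{ik}^*$; carefully checking these character computations and the propagation of space relations to $\langle\Pi\rangle$ is where the real work lies.
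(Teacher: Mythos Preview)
Your reduction in the opening paragraph and the $d\ge 2$ argument are fine. The $d=1$ case, however, has a genuine gap.

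\textbf{The character space is too small.} Your key claim is that, when $\paarpartww\notin\Pi$, \emph{every} complex unit vector $\xi$ defines a character of $C\big(X_{N,1}(\Pi)\big)$. This is not implied by the hypotheses and is false in general. The lemma allows $\Pi$ to contain any non-crossing pair partitions, and the case $\mathcal O_{\textnormal{glob}}(k)$ from Table~\ref{table:classification_free_case} (to which the lemma is explicitly applied in the proof of Theorem~\ref{thm:free_case_main_result}) has $\paarpartww\otimes\paarpartbb\in\Pi$ while $\paarpartww\notin\Pi$ for $k\ge 4$. The relation $R^{Sp}_{\scalebox{0.7}{\paarpartww}\otimes\scalebox{0.7}{\paarpartbb}}(x)$ reads $\big(\sum_a x_a^2\big)\big(\sum_b (x_b^*)^2\big)=\mathds 1$, so any character $\chi_\xi$ must satisfy $\big|\sum_a\xi_a^2\big|=1$. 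Your test vector $(e_a+ie_b)/\sqrt 2$ has $\sum_c\xi_c^2=0$ and is therefore \emph{not} a character; without it your Hermitian-form argument only yields $M_{ab}+M_{ba}=0$, not $M_{ab}=0$. The paper circumvents this entirely: instead of characters it uses the $^*$-homomorphism $\psi_{O_2^+}\colon C\big(X_{N,1}(\Pi)\big)\to C(O_2^+)$ of Notation~\ref{not:hom_from_PQS_to_M-version_of_smaller_BSQG} (available because $O_N^+\subseteq G_N(\Pi)$ for every such $\Pi$), and then the linear independence of $o_{11}o_{21}$ and $o_{21}o_{11}$ in $C(O_2^+)$ from Lemma~\ref{lem:linear_independence_xy,yx_case_O_2^+} separates the off-diagonal terms.

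\textbf{Secondary issues.} In the $\paarpartww\in\Pi$ branch you invoke Lemmas~\ref{lem:first_column-relations} and~\ref{lem:relations_on_generators_imply_relations_for_category} to ``propagate the space relations from $\Pi$ to $\langle\Pi\rangle$'' and conclude $x_i=x_i^*$; but those lemmas concern the quantum \emph{group} relations $R^{Gr}_p(u_G)$, not the quantum \emph{space} relations $R^{Sp}_p(x)$, and the counterexample after Question~\ref{quest:when_does_QSymG_only_depend_on_category?} shows that space relations do \emph{not} propagate to the generated category. Moreover, your assertion that $v_G^Tv_G-\mathds 1$ is symmetric (needed to go from the real quadratic-form vanishing to the matrix vanishing) is unjustified, since the entries $v_{ij}$ need not commute. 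Finally, note that $v_G^T\bar v_G=\mathds 1$ together with invertibility only yields $\bar v_G\bar v_G^*=\bar v_G^*\bar v_G=\mathds 1$; to obtain $v_Gv_G^*=v_G^*v_G=\mathds 1$ you must run the same argument with $\beta$ in place of $\alpha$, which you do not mention.
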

\begin{proof}
We only prove the case \(p\!=\!\baarpartwb\). The other relations may be proved similarly. For the case {\paarpartww} replace every appearing \(v_{ij}^*\) and \(x_{ij}^*\) by \(v_{ij}\) and \(x_{ij}\), respectively.
If \(N\!=\!1\) or \(d\!\ge\!2\) then the result follows from Theorem \ref{thm:hom_from_PQS_to_QSymG}, so let \(d\!=\!1\!<\!N\).
By Theorem \ref{thm:hom_from_PQS_to_QSymG}, \(\baarpartwb\!\in\!\Pi\) implies that the diagonals of \(\bar{v}^*_G\bar{v}_G\) are equal to \(\mathds{1}\).
It remains to show, that the off-diagonals are zero.\newline
Applying \(\alpha\) to \(\displaystyle\sum_{s=1}^{N}x_{s1}x_{s1}^*=\mathds{1}\) yields \(\displaystyle\sum_{t_1,t_2=1}^{N}\sum_{s=1}^{N}v_{st_1}v_{st_2}^*\otimes x_{t_11}x^*_{t_21}=\mathds{1}\). As \(\Pi\) contains only non-crossing pairings, we have \(O_N^+\!\subseteq\!G_N(\Pi)\) so we have a mapping \(\psi_{O_2^+}\) as described in Notation \ref{not:hom_from_PQS_to_M-version_of_smaller_BSQG}.
Applying \(\mathds{1}\otimes\psi_{O_2^+}\) to this relation, we obtain
\[\sum_{t_1,t_2=1}^{2}\sum_{s=1}^{N}v_{st_1}v_{st_2}^*\otimes o_{t_11}o_{t_21}=\mathds{1}.\]
Using \(\sum_{s=1}^{N}v_{st_1}v_{st_1}^*=\mathds{1}=\sum_{t_1=1}^{2}o_{t_11}o_{t_11}\), this implies
\[\sum_{s=1}^{N}v_{s1}v_{s2}^*\otimes o_{11}o_{21}+\sum_{s=1}^{N}v_{s2}v_{s1}^*\otimes o_{21}o_{11}=0.\]
By linear independence of the right legs (see Lemma \ref{lem:linear_independence_xy,yx_case_O_2^+}) the left legs must be zero.

As the choice of \(x_{11}\) and \(x_{21}\) as those rows of \(x\) not being sent to zero by \(\psi_{O_2^+}\) was arbitrary we have for all \(\gamma_1\!\neq\gamma_2\) the result
\[\sum_{s=1}^{N}v_{s\gamma_1}v_{s\gamma_2}^*=0.\]
\end{proof}

\begin{lem}\label{lem:mcpp_relations_are_fulfilled_pairings_and_singleton-partition_case}
Consider the situation of Notation \ref{not:fixing_notations} with \(d\!\le\!N\) arbitrary. If \(\;\Pi\) only contains non-crossing partitions with blocks of size at most two and if every row and column of \(v_G\) sums up to \(\mathds{1}\), then \(R^{Gr}_p(v_G)\) is fulfilled for every \(p\!\in\!\{\paarpartwb,\paarpartbw,\baarpartwb,\baarpartbw\}\). If \(\Pi\) contains \(\paarpartww\), then also \(R^{Gr}_{\scalebox{0.7}{\paarpartww}}(v_G)\) holds.
\end{lem}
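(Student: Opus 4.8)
The plan is to mimic the proof of Lemma~\ref{lem:mcpp_relations_are_fulfilled_pairing_case}, replacing the pair \(O_2^+\), Lemma~\ref{lem:linear_independence_xy,yx_case_O_2^+} by \(B_3^+\), Lemma~\ref{lem:linear_independence_xy,yx_case_B_3^+}. First I would reduce as before: if \(N=1\) there is no off-diagonal relation to prove and the claim follows from Theorem~\ref{thm:hom_from_PQS_to_QSymG}; if \(d\ge 2\) the claim is again immediate from Theorem~\ref{thm:hom_from_PQS_to_QSymG}; the statement about \(\paarpartww\) is obtained by running the whole argument with every \(x_{ij}^*\), \(v_{ij}^*\) replaced by \(x_{ij}\), \(v_{ij}\); and the relations for \(\paarpartwb,\paarpartbw,\baarpartbw\) follow from the one for \(\baarpartwb\) by passing to the transpose (use \(\phi^T\), respectively \(\beta\) in place of \(\alpha\)), exactly as in Lemma~\ref{lem:mcpp_relations_are_fulfilled_pairing_case}. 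So it suffices to treat \(d=1<N\) and \(p=\baarpartwb\). Writing \(w_{ij}:=\sum_s v_{si}v_{sj}^*\), Theorem~\ref{thm:hom_from_PQS_to_QSymG} already gives the diagonal relations \(w_{ii}=\mathds 1\), and the task is to show \(w_{ij}=0\) for \(i\neq j\).

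For \(N=2\) this can be done by hand: the hypothesis that every row and column of \(v_G\) sums to \(\mathds 1\) forces both off-diagonal entries of \(v_G\) to equal \(\mathds 1-v_{11}\) and the second diagonal entry to equal \(v_{11}\); expanding \(\sum_s v_{s1}v_{s1}^*=\mathds 1\) then yields \(2v_{11}v_{11}^*=v_{11}+v_{11}^*\), whence \(w_{12}=v_{11}(\mathds 1-v_{11}^*)+(\mathds 1-v_{11})v_{11}^*=0\) and \(w_{21}=w_{12}^*=0\). For \(N\ge 3\): since \(\Pi\) consists of non-crossing partitions with blocks of size at most two, \(\langle\Pi\rangle\) is contained in the category of \(B_N^+\), hence \(B_3^+\subseteq G_3(\Pi)\) and Notation~\ref{not:hom_from_PQS_to_M-version_of_smaller_BSQG} provides a unital \(^*\)-homomorphism \(\psi_{B_3^+}\colon C\bigl(X_{N,1}(\Pi)\bigr)\to C(B_3^+)\) sending (after composing with a row permutation of the PQS) three prescribed rows \(\gamma_1,\gamma_2,\gamma_3\) to \(b_{11},b_{21},b_{31}\) and all other rows to \(0\). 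Applying \(\alpha\) to \(\sum_s x_{s1}x_{s1}^*=\mathds 1\) and then \(\mathds 1\otimes\psi_{B_3^+}\) produces the identity \(\sum_{a,b=1}^{3}w_{\gamma_a\gamma_b}\otimes b_{a1}b_{b1}=\mathds 1\) in \(C(G)\otimes C(B_3^+)\); subtracting the diagonal part, which equals \(\mathds 1\) because \(w_{aa}=\mathds 1\) and \(\sum_a b_{a1}^2=\mathds 1\), leaves \(\sum_{a\neq b}w_{\gamma_a\gamma_b}\otimes b_{a1}b_{b1}=0\). Now I would substitute \(b_{31}=\mathds 1-b_{11}-b_{21}\), collect terms, and use linear independence in \(C(B_3^+)\) (extending Lemma~\ref{lem:linear_independence_xy,yx_case_B_3^+} via the dimension count of Lemma~\ref{lem:dimension_of_subspaces_(O_2^+_and_B_3^+)}) to conclude that the coefficients of \(b_{11}b_{21}\), \(b_{21}b_{11}\), \(b_{11}-b_{11}^2\) and \(b_{21}-b_{21}^2\) all vanish. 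Since \(\gamma_1,\gamma_2,\gamma_3\) are arbitrary distinct rows, this gives the skew-symmetry \(w_{ij}+w_{ji}=0\) together with the cocycle relation \(w_{ij}=w_{ik}+w_{kj}\) for pairwise distinct \(i,j,k\). Finally I would feed in the row- and column-sum hypothesis on \(v_G\), which yields \(\sum_j w_{ij}=\mathds 1=\sum_i w_{ij}\) in addition to \(w_{ii}=\mathds 1\); a short linear-algebra argument (on the off-diagonal \(w_{ij}\) has the form \(a_i-a_j\) for a fixed reference column, and \(\sum_{j\neq i}w_{ij}=0\) then forces all \(a_i\) equal, hence zero) gives \(w_{ij}=0\) for all \(i\neq j\), completing \(R^{Gr}_{\baarpartwb}(v_G)\), and transposing finishes the remaining mixed pairs.

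I expect the reduction step for \(N\ge 3\) to be the main obstacle: one must rewrite the six off-diagonal terms \(\sum_{a\neq b}w_{\gamma_a\gamma_b}\otimes b_{a1}b_{b1}=0\) by means of the defining relations \(b_{i3}=\mathds 1-b_{i1}-b_{i2}\), \(b_{3j}=\mathds 1-b_{1j}-b_{2j}\) of \(B_3^+\) into a form whose right legs are known to be linearly independent, and this needs the mild strengthening of Lemma~\ref{lem:linear_independence_xy,yx_case_B_3^+} mentioned above — namely linear independence of \(b_{11}b_{21}\), \(b_{21}b_{11}\), \(b_{11}-b_{11}^2\), \(b_{21}-b_{21}^2\) in \(C(B_3^+)\) — which is proved by the same rank argument built on Lemma~\ref{lem:dimension_of_subspaces_(O_2^+_and_B_3^+)}. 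The other ingredients are comparatively routine: the combinatorial closing argument using the row- and column-sum hypothesis, the transpose symmetry for the other three mixed pairs, the removal of adjoints for the \(\paarpartww\) statement, and the corner case \(N=2\), which is dispatched directly from the row- and column-sum hypothesis since \(B_3^+\) is then unavailable.
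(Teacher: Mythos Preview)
Your overall plan is close to the paper's, but the central step for \(N\ge 3\) contains a genuine error: the four elements \(b_{11}b_{21}\), \(b_{21}b_{11}\), \(b_{11}-b_{11}^2\), \(b_{21}-b_{21}^2\) are \emph{not} linearly independent in \(C(B_3^+)\), so no ``mild strengthening'' of Lemma~\ref{lem:linear_independence_xy,yx_case_B_3^+} along those lines is available. Indeed, from \(\sum_i b_{i1}=\mathds 1\) and \(\sum_i b_{i1}^2=\mathds 1\) one has \(\sum_{a\neq b} b_{a1}b_{b1}=0\); substituting \(b_{31}=\mathds 1-b_{11}-b_{21}\) into this identity gives
\[
b_{11}b_{21}+b_{21}b_{11}=2\bigl(b_{11}-b_{11}^2\bigr)+2\bigl(b_{21}-b_{21}^2\bigr),
\]
so the span of your six right legs \(b_{a1}b_{b1}\) \((a\neq b)\) is only three-dimensional. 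Consequently the equation \(\sum_{a\neq b}w_{\gamma_a\gamma_b}\otimes b_{a1}b_{b1}=0\) yields only three independent relations on the \(w_{\gamma_a\gamma_b}\), not four; in particular you do \emph{not} get \(w_{ij}+w_{ji}=0\) directly, only that \(w_{ij}+w_{ji}\) is the same for all pairs \(\{i,j\}\) drawn from the chosen triple.

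The paper handles exactly this obstruction by splitting the argument into two steps. In Step~1 it uses classical point evaluations in \(B_3\subset B_3^+\) (via \(\textnormal{ev}_\sigma\)-type maps at concrete bistochastic orthogonal matrices) to obtain \(w_{ij}+w_{ji}=w_{ik}+w_{ki}\) for all triples, and then invokes the row/column-sum hypothesis \(\sum_t v_{st}=\sum_s v_{st}=\mathds 1\) to sum over \(t\) and force \(w_{ij}+w_{ji}=0\). Only then, in Step~2, does it apply \(\psi_{B_3^+}\): with skew-symmetry already in hand the six off-diagonal terms collapse to a single tensor with right leg \(b_{11}b_{21}-b_{21}b_{11}\), and now the genuine Lemma~\ref{lem:linear_independence_xy,yx_case_B_3^+} (two-element independence) gives the cyclic relation \(w_{12}+w_{23}+w_{31}=0\); a second summation over the free index, again using the row/column sums, finishes. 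Your proposal can be repaired along these lines --- the \(B_3^+\) step already encodes the information of Step~1 (your relation (c)) and of Step~2 (the coefficient of \(b_{11}b_{21}-b_{21}b_{11}\)), so one can dispense with the classical evaluations --- but you must first use the row/column-sum hypothesis to upgrade ``all \(w_{ij}+w_{ji}\) equal'' to ``all \(w_{ij}+w_{ji}=0\)'' before the cocycle relation becomes available. The reductions to \(d=1<N\), the \(N=2\) case, the transpose symmetry, and the \(\paarpartww\) variant are as in the paper.
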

\begin{proof}
As in Lemma \ref{lem:mcpp_relations_are_fulfilled_pairing_case} we only care about the case \(1\!=d\!<\!N\) and we only consider \(p\!=\!\baarpartwb\). Again, the only thing left to prove is that the off-diagonals of \(\bar{v}_G^*\bar{v}_G\) vanish.

First assume \(N\!=\!2\).
We have by Theorem \ref{thm:hom_from_PQS_to_QSymG}
\[(\underbrace{v_{s1}+v_{s2}}_{=\mathds{1}})(\underbrace{v_{s1}^*+v_{s2}^*}_{=\mathds{1}})\!=\!\mathds{1}=v_{s1}v_{s1}^*+v_{s2}v_{s2}^*,\]
so \(v_{s1}v_{s2}^*=-v_{s2}v_{s1}^*\). Furthermore from
\[v_{11}+v_{12}=\mathds{1}=v_{11}+v_{21}\quad\textnormal{and}\quad v_{11}+v_{12}=\mathds{1}=v_{22}+v_{12}\]
we deduce \(v_{12}=v_{21}\) and \(v_{11}=v_{22}\).
Combining these relations yields for \(t_1\!\neq\!t_2\):
\[\sum_ {s=1}^{N}v_{st_1}v_{st_2}^*=v_{1t_1}v_{1t_2}^*+v_{2t_1}v_{2t_2}^*=v_{1t_1}v_{1t_2}^*+v_{1t_2}v_{1t_1}^*
=v_{1t_1}v_{1t_2}^*-v_{1t_1}v_{1t_2}^*=0.\]
For the rest of the proof, let \(N\!\ge\!3\).

\emph{Step 1}. We first prove
\begin{equation}\label{eqn:lem:{mcpp}_relations_are_fulfilled_pairings_and_singleton-partition_case_(2)}
\sum_{s=1}^{N}v_{st_1}v_{st_2}^*=-\sum_{s=1}^{N}v_{st_2}v_{st_1}^*\quad,\quad\forall t_1\!\neq\!t_2.
\end{equation}
Starting with the equation \(\displaystyle\sum_{s=1}^{N}x_{s1}x_{s1}^*=\mathds{1}\) we can apply \(\alpha\) to it and get
\[\sum_{s=1}^{N}\sum_{t_1,t_2=1}^{N}v_{st_1}v_{st_2}^*\otimes x_{t_11}x_{t_21}^*=\mathds{1}.\]
Using \(\sum_{s=1}^{N}v_{st_1}v_{st_1}^*\!=\!\mathds{1}\!=\!\sum_{t_1=1}^{N}x_{t_11}x_{t_11}^*\), we have
\begin{equation}\label{eqn:lem:{mcpp}_relations_are_fulfilled_pairings_and_singleton-partition_case_(1)}
\sum_{s=1}^{N}\sum_{t_1\neq t_2}v_{st_1}v_{st_2}^*\otimes x_{t_11}x_{t_21}^*=0.
\end{equation}
Consider now the two \(^*\)-homomorphisms \(\phi_1\) and \(\phi_2\) given by the mappings

\[\begin{pmatrix*}[r]
x_{11}\;\;\vspace{8pt}\\
x_{21}\;\;\vspace{8pt}\\
x_{31}\;\;\vspace{8pt}\\
\end{pmatrix*}
\overset{\phi_1}{\longmapsto} 
\begin{pmatrix*}[r]
-\frac{1}{3}\;\;\vspace{8pt}\\
\frac{2}{3}\;\;\vspace{8pt}\\
\frac{2}{3}\;\;\vspace{8pt}\\
\end{pmatrix*}
\quad\quad\textnormal{and}\quad\quad
\begin{pmatrix*}[r]
x_{11}\;\;\vspace{8pt}\\
x_{21}\;\;\vspace{8pt}\\
x_{31}\;\;\vspace{8pt}\\
\end{pmatrix*}
\overset{\phi_2}{\longmapsto} 
\begin{pmatrix*}[r]
\frac{2}{3}\;\;\vspace{8pt}\\
\frac{2}{3}\;\;\vspace{8pt}\\
-\frac{1}{3}\;\;\vspace{8pt}\\
\end{pmatrix*}.\vspace{6pt}\]

and all other \(x_{i1}\) sent to zero. To prove the existence of these maps, we observe the following:
By the conditions on \(\Pi\) we have \(B_N^+\subseteq G_N(\Pi)\).
Recall that the matrix \(u_{B_N^+}=(b_{ij})\) is orthogonal and  each row and column sums up to \(\mathds{1}\).
In the sense of Notation \ref{not:hom_from_PQS_to_M-version_of_smaller_BSQG} we can send \(x_{11},x_{21}\) and \(x_{31}\) by a map \(\psi_{B_3^+}\) to the first column of \(u_{B_3^+}\) and the rest to zero.
Finally, note that the complex vectors on the right appear as columns of matrices in \(B_3\!\subseteq\!B_3^+\) so in a second step we can map the \(\psi_{B_3^+}(x_{i1})\) to the complex numbers on the right sides.
Applying \(\mathds{1}\!\otimes\!\phi_1\) and \(\mathds{1}\!\otimes\!\phi_2\) to Equation \ref{eqn:lem:{mcpp}_relations_are_fulfilled_pairings_and_singleton-partition_case_(1)} leads to
\[\frac{4}{9}\sum_{s=1}^{N}(v_{s2}v_{s3}^*+v_{s3}v_{s2}^*)
-\frac{2}{9}\sum_{s=1}^{N}(v_{s1}v_{s2}^*+v_{s2}v_{s1}^*)
-\frac{2}{9}\sum_{s=1}^{N}(v_{s1}v_{s3}^*+v_{s3}v_{s1}^*)=0\]
and
\[-\frac{2}{9}\sum_{s=1}^{N}(v_{s2}v_{s3}^*+v_{s3}v_{s2}^*)
+\frac{4}{9}\sum_{s=1}^{N}(v_{s1}v_{s2}^*+v_{s2}v_{s1}^*)
-\frac{2}{9}\sum_{s=1}^{N}(v_{s1}v_{s3}^*+v_{s3}v_{s1}^*)=0,\]
which gives us in the end
\[\sum_{s=1}^{N}(v_{s1}v_{s2}^*+v_{s2}v_{s1}^*)=\sum_{s=1}^{N}(v_{s1}v_{s3}^*+v_{s3}v_{s1}^*).\]

As the choice of \((1,2,3)\) for the non-zero rows in the mappings \(\phi_1\) and \(\phi_2\) was arbitrary, we have this result for all pairwise different indices \((1,2,t)\), hence 
\[\displaystyle\sum_{s=1}^{N}v_{s1}v_{s2}^*+v_{s2}v_{s1}^*=\sum_{s=1}^{N}v_{s1}v_{st}^*+v_{st}v_{s1}^*\]
for all \(t\). In particular, since \(\sum_{t=1}^{N}v_{st}^*\!=\!\mathds{1}=\sum_{s=1}^{N}v_{s1}\):
\[(N-1)\sum_{s=1}^{N}v_{s1}v_{s2}^*+v_{s2}v_{s1}^*=\sum_{t=2}^{N}\sum_{s=1}^{N}v_{s1}v_{st}^*+v_{st}v_{s1}^*
=\mathds{1}-\sum_{s=1}^{N}v_{s1}v_{s1}^*+\mathds{1}-\sum_{s=1}^{N}v_{s1}v_{s1}^*=0\]
and because the indices \((1,2)\) were arbitrary this means for all \(t_1\!\neq\!t_2\)
\begin{equation*}
\sum_{s=1}^{N}v_{st_1}v_{st_2}^*=-\sum_{s=1}^{N}v_{st_2}v_{st_1}^*.\vspace{11pt}
\end{equation*}

\emph{Step 2:} We consider again Equation \ref{eqn:lem:{mcpp}_relations_are_fulfilled_pairings_and_singleton-partition_case_(1)}.
As in step 1 we can apply \(\psi_{B_3^+}\) to the second legs.
Using additionally Equation \ref{eqn:lem:{mcpp}_relations_are_fulfilled_pairings_and_singleton-partition_case_(2)} we find
\[\sum_{s=1}^{N}\sum_{t_1< t_2\le 3}v_{st_1}v_{st_2}^*\otimes \left(b_{t_11}b_{t_21}-b_{t_21}b_{t_11}\right)=0.\]
As \(b_{21}=\mathds {1}-b_{11}-b_{31}\) we easily see \(b_{11}b_{21}-b_{21}b_{11}=b_{21}b_{31}-b_{31}b_{21}=b_{31}b_{11}-b_{11}b_{31}\) and so we have
\[\left(\sum_{s=1}^{N}(v_{s1}v_{s2}^*+v_{s2}v_{s3}^*+v_{s3}v_{s1}^*)\right)\otimes \left(b_{11}b_{21}-b_{21}b_{11}\right)=0.\]
By Lemma \ref{lem:linear_independence_xy,yx_case_B_3^+}  we have \(b_{11}b_{21}\neq b_{21}b_{11}\), so the left leg of the tensor product must be zero.
The pairwise different indices \((1,2,3)\) were arbitrary, so using \((1,2,t)\) we have by Step 1
\begin{align*}
0&=\sum_{t=3}^{N}\sum_{s=1}^{N}\left(v_{s1}v_{s2}^*+v_{s2}v_{st}^*+v_{st}v_{s1}^*\right)&\\
&=\left((N-2)\sum_{s=1}^{N}v_{s1}v_{s2}^*\right)+\left(\mathds{1}-\sum_{s=1}^{N}v_{s2}(v_{s1}^*+v_{s2}^*)\right)+\left(\mathds{1}-\sum_{s=1}^{N}(v_{s1}+v_{s2})v_{s1}^*\right)&\\
&=(N-2)\sum_{s=1}^{N}v_{s1}v_{s2}^*-\sum_{s=1}^{N}v_{s2}v_{s1}^*-\sum_{s=1}^{N}v_{s1}v_{s1}^*\\
&=N\sum_{s=1}^{N}v_{s1}v_{s2}^*,\\
\end{align*}
giving us the desired relation for \(t_1\!=\!1\) and \(t_2\!=\!2\). As the choice of \((1,2)\) was arbitrary we proved the statement for general \(t_1\!\neq\!t_2\).
\end{proof}

Before continuing, we need the notion of a blockstable category of partitions:
\begin{defn}
We call a category \(\mathcal{C}\) of partitions \emph{blockstable}, if for every \(p\!\in\!\mathcal{C}\) and every block \(b\) of \(p\) we have \(b\!\in\!\mathcal{C}\).  In other words: By erasing all points (and lines) not belonging to \(b\), we obtain again a partition contained in \(\mathcal{C}\).
\end{defn}
We recall the classification of free easy quantum groups in the sense that the following sets \(\Pi\) generate all possible (and pairwise different) non-crossing categories of partitions (see  \cite[Thm. 7.1 and 7.2]{tarragoweberclassificationpartitions}).\vspace{-4pt}
\renewcommand*{\arraystretch}{1.8}
\begin{table}[H]\small
\begin{tabular}[l]{|m{19mm}|m{60mm}|m{32mm}|m{26mm}|}
\hline
Case & Elements in \(\Pi\)& Parameter range & Blockstable cases\\
\hhline{|=|=|=|=|}
\(\mathcal{O}_{\textnormal{loc}}\) &\(mcpp\)& -- & blockstable\\
\hline
\(\mathcal{H'}_{\textnormal{loc}}\) &\(\vierpartwbwb,mcpp\)& -- & blockstable\\
\hline
\(\mathcal{H}_{\textnormal{loc}}(k,l)\) & \(b_k,b_l\otimes\bar{b}_l,\vierpartwwbb,mcpp\)& \(k,l\!\in\!\N_0\backslash\{1,2\},l|k\) & \(k\!=\!l\)\\
\hline
\(\mathcal{S}_{\textnormal{loc}}(k,l)\) &\({\singletonw}^{\otimes k},\positionerl, \vierpartwbwb, \singletonw\otimes\!\!\singletonb,mcpp\)& \(k,l\!\in\!\N_0\backslash\{1\},l|k\) & not blockstable\\
\hline
\(\mathcal{B}_{\textnormal{loc}}(k,l)\) &\(\singletonw^{\otimes k},\positionerl,\singletonw\otimes\!\!\singletonb,mcpp\)& \(k,l\!\in\!\N_0,l|k\) & \(k\!=\!l\!=\!1\)\\
\hline
\(\mathcal{B'}_{\textnormal{loc}}(k,l,0)\) &\(\singletonw^{\otimes k},\positionerl,\positionerwbwb,\singletonw\otimes\!\!\singletonb,mcpp\)& \(k,l\!\in\!\N_0\backslash\{1\},l|k\) & not blockstable\\
\hline
\(\mathcal{B'}_{\textnormal{loc}}(k,l,\frac{l}{2})\) &\(\singletonw^{\otimes k},\positionerl,\positionerrpluseins,\positionerwwbb,\vspace{4pt}\newline\singletonw\otimes\!\!\singletonb,mcpp\)& \(k\!\in\!\N_0\backslash\{1\}\),\vspace{3pt}\newline \(\,l\!\in\!2\N_0\backslash\{0,2\}\),\vspace{3pt}\newline
\(\,l|k,\,r\!=\!\frac{l}{2}\) & not blockstable\\
\hline
\(\mathcal{O}_{\textnormal{glob}}(k)\) &\(\paarpartww^{\scalebox{0.8}{\raisebox{-3pt}{\(\small\otimes \frac{k}{2}\)}}},\paarpartww\otimes\!\paarpartbb,mcpp\)& \(k\!\in\! 2\N_0\) & \(k\!=\!2\)\\
\hline
\(\mathcal{H}_{\textnormal{glob}}(k)\) &\(b_k,\vierpartwbwb,\paarpartww\otimes\!\paarpartbb,mcpp\)&\(k\!\in\!2\N_0\) & \(k \!=\!2\)\\
\hline
\(\mathcal{S}_{\textnormal{glob}}(k)\) &\(\singletonw^{\otimes k},\!\vierpartwbwb, \!\singletonw\otimes\!\!\singletonb, \paarpartww\otimes\!\paarpartbb,mcpp\)&\(k\!\in\!\N_0\) &\(k\!=\!1\)\\
\hline
\(\mathcal{B}_{\textnormal{glob}}(k)\)&\(\singletonw^{\otimes k}, \singletonw\otimes\!\!\singletonb, \paarpartww\otimes\!\paarpartbb,mcpp\)& \(k\!\in\!2\N_0\) &not blockstable\\
\hline
\(\mathcal{B}'_{\textnormal{glob}}(k)\)&\(\singletonw^{\otimes k}, \positionerwwbb, \singletonw\otimes\!\!\singletonb, \paarpartww\otimes\!\paarpartbb,mcpp\) &\(k\!\in\!\N_0\) &\(k\!=\!1\)\\
\hline
\end{tabular}
\caption{Here, \(b_k\)/\(\bar{b}_k\) is the one-block partition in \(P(0,k)\) with only white/black points and with \(mcpp\) we denote the four mixed-coloured pair partitions \(\{\paarpartwb,\paarpartbw,\baarpartwb,\baarpartbw\}\).}
\label{table:classification_free_case}
\end{table}\vspace{0pt}

\renewcommand*{\arraystretch}{1}

\begin{thm}\label{thm:free_case_main_result}
Let \(N\!\in\!\N\backslash\{1\}\) and fix any of the sets \(\Pi\) presented in Table \ref{table:classification_free_case}. In the case \(d\!=\!2\), \(G_N(\Pi)\) is the quantum symmetry group of \(X_{N,2}(\Pi)\).\newline
If the category \(\langle\Pi\rangle\) is blockstable, or if \(N\!=\!1\), then this results even holds for \(d\!=\!1\), i.e. \(X_{N,1}(\Pi)\).

%
\end{thm}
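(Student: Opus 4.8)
By Theorem~\ref{thm:existence_of_alpha_and_beta} we already have $G_N(\Pi)\subseteq G$, where $G$ denotes the quantum symmetry group of $X_{N,d}(\Pi)$ with generators $v_G=(v_{ij})$. So what remains is to establish the reverse inclusion $G\subseteq G_N(\Pi)$, i.e.\ a $^*$-homomorphism $C(G_N(\Pi))\to C(G)$ sending $u_{ij}$ to $v_{ij}$; by the universal property and Lemma~\ref{lem:relations_on_generators_imply_relations_for_category} this amounts to showing that $R^{Gr}_p(v_G)$ holds for every $p\in\Pi$ (it then holds automatically for all of $\langle\Pi\rangle$). Now, by Lemma~\ref{lem:lemma2_relations_for_v_ij} together with Remark~\ref{rem:new_remark_to_lemma2_relations_for_v_ij}, every defining equation of $R^{Gr}_p(v_G)$ whose index tuples have at most $d$ distinct entries is already available, so the whole content of the proof is to upgrade these to arbitrary index tuples, using the explicit shapes of the partitions of Table~\ref{table:classification_free_case}. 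The case $N=1$ being trivial, we assume $N\ge 2$.

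The first step I would carry out is to collect the structural relations on $v_G$. When a four-block partition $\vierpartwbwb$ or $\vierpartwwbb$ lies in $\Pi$, Lemma~\ref{lem:relations_of_four-block} and Remark~\ref{rem:four_block_implies_remaining_mcpp_relations} show that $v_G,v_G^T,\bar v_G,\bar v_G^T$ are unitary and that the relevant mixed products of two generators sharing a row, resp.\ a column, with the other index distinct vanish (and $v_G$ is orthogonal if moreover $\paarpartww\in\Pi$). When $\Pi$ is made of blocks of size at most two, Lemmas~\ref{lem:mcpp_relations_are_fulfilled_pairing_case} and~\ref{lem:mcpp_relations_are_fulfilled_pairings_and_singleton-partition_case} give unitarity (or orthogonality) of $v_G,v_G^T$ via the maps $\psi_{O_2^+},\psi_{B_3^+}$ of Notation~\ref{not:hom_from_PQS_to_M-version_of_smaller_BSQG} and the independence Lemmas~\ref{lem:linear_independence_xy,yx_case_O_2^+},~\ref{lem:linear_independence_xy,yx_case_B_3^+}. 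Finally, whenever $\Pi$ contains a singleton partition, the corresponding labelings are constant, hence $R^{Sp}(x)$ already lives at $d=1$, and Lemma~\ref{lem:lemma2_relations_for_v_ij} yields $\sum_i v_{ik}=\sum_i v_{ki}=\mathds 1$ for all $k$.

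For $d=1$ with $\langle\Pi\rangle$ blockstable I would go through the blockstable rows of Table~\ref{table:classification_free_case} one by one. The point is that each can be described using only single-block partitions (possibly tensored) together with $mcpp$, and for a one-block partition $b$ every valid labeling is constant: the ``valid'' equations of $R^{Gr}_b(v_G)$ are then exactly the $d=1$ relations from Lemma~\ref{lem:lemma2_relations_for_v_ij}, while the ``invalid'' equations $\sum_m v_{m\gamma_1}^{\omega_1}\cdots v_{m\gamma_k}^{\omega_k}=0$ for a non-constant tuple $\gamma$ follow from the structural step, since $\gamma$ has two consecutive distinct entries and the corresponding two-letter product, resp.\ its sum over $m$, is annihilated by the partial-isometry, resp.\ orthogonality, relations. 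Tensor products of one-block partitions factor into products of such relations. The global pair partitions $\paarpartww\otimes\!\paarpartbb$ and the rows $\mathcal B'_{\textnormal{glob}}$, $\mathcal B_{\textnormal{loc}}(1,1)$ need a little more care, but here too blockstability forces $\paarpartww$ (and singletons) into $\langle\Pi\rangle$, so the orthogonality and column-sum relations above are in force.

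For the general case $d=2$ I would run the case analysis over all rows of Table~\ref{table:classification_free_case}, treating the remaining generators --- essentially the positioner-type partitions $\positionerl,\positionerwbwb,\positionerwwbb,\positionerrpluseins$ and the pair partitions $\paarpartww^{\otimes k/2},\paarpartww\otimes\!\paarpartbb$. For each such $p$ the method is that of Example~\ref{ex:how_to_deduce_relations_for_the_v_ij} and Lemma~\ref{lem:mcpp_relations_are_fulfilled_pairing_case}: apply $\alpha$ (or $\beta$) to $R^{Sp}_p(x)$, then $\mathds 1\otimes\psi_{G_M(\Pi')}$ for a suitable small easy quantum group $G_M(\Pi')\subseteq G_N(\Pi)$ (one of $S_N$, $H_N$, $B_N$, $O_2^+$, $B_3^+$), and use linear independence of the resulting right legs (Lemmas~\ref{lem:linear_independence_xy,yx_case_O_2^+},~\ref{lem:linear_independence_xy,yx_case_B_3^+}, or the independence of distinct coordinate functions on a finite group) to peel off the wanted relation on the $v_{ij}$ for arbitrary index tuples; combined with the structural relations this reconstitutes $R^{Gr}_p(v_G)$. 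The main obstacle is precisely this case analysis: for the positioner partitions the index tuples can carry more than two distinct entries, and one must pick for each row the right auxiliary quantum group and evaluations so that the legs being separated are genuinely independent --- which is exactly why the dimension count of Lemma~\ref{lem:dimension_of_subspaces_(O_2^+_and_B_3^+)} enters, and why $d=2$ rather than $d=1$ is forced in the non-blockstable rows, a single column no longer spanning enough independent data.
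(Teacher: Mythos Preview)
Your overall architecture matches the paper: reduce to showing $R^{Gr}_p(v_G)$ for each $p\in\Pi$, first establish the $mcpp$ (and, where present, four-block and orthogonality) relations, then upgrade the relations already known for index tuples with at most $d$ distinct entries to arbitrary tuples by a case analysis over Table~\ref{table:classification_free_case}.

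The main divergence is in how you propose to handle $d=2$. The paper does \emph{not} use the apply-$\alpha$-then-evaluate-and-separate-by-linear-independence technique there. Once Theorem~\ref{thm:hom_from_PQS_to_QSymG} (equivalently Lemma~\ref{lem:lemma2_relations_for_v_ij}) supplies all relations for tuples with at most two distinct entries, the extension is done by direct algebraic and $C^*$-positivity manipulations inside $C(G)$. In the worked case $\mathcal S_{\textnormal{loc}}(k,l)$, for instance, the relations for $\singletonw\!\otimes\!\singletonb$ involve only two indices and hence are already available; they force every row and column of $v_G$ to sum to the same unitary, so the singleton indices in $\singletonw^{\otimes k}$ and in the positioner can be freely replaced by any fixed value. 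In the worked case $\mathcal O_{\textnormal{glob}}(k)$, one specialises $\gamma'_1=\gamma'_4\neq\gamma'_2=\gamma'_3$ in the $\paarpartww\!\otimes\!\paarpartbb$-relation to obtain an identity of the form $aa^*=0$, whence $\sum_{t}v_{\gamma'_1 t}v_{\gamma'_2 t}=0$ for $\gamma'_1\neq\gamma'_2$; one then checks that the diagonal sum is independent of $\gamma'$ and replaces pairwise. These are positivity and substitution tricks, not linear-independence arguments. The dimension count of Lemma~\ref{lem:dimension_of_subspaces_(O_2^+_and_B_3^+)} and the independence Lemmas~\ref{lem:linear_independence_xy,yx_case_O_2^+}--\ref{lem:linear_independence_xy,yx_case_B_3^+} enter the paper \emph{only} for $d=1$, inside Lemmas~\ref{lem:mcpp_relations_are_fulfilled_pairing_case} and~\ref{lem:mcpp_relations_are_fulfilled_pairings_and_singleton-partition_case}, where neither a four-block nor a second column is available to kill the off-diagonals of $v_G v_G^*$, $\bar v_G\bar v_G^*$, etc. Your plan would plausibly go through, but it routes the $d=2$ argument through a heavier toolkit than the paper needs, and your closing sentence misplaces where the dimension count is actually used.
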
 

\begin{proof}
We consider again the situation as in Notation \ref{not:fixing_notations}.
By Theorem \ref{thm:existence_of_alpha_and_beta} we know \(G_N(\Pi)\!\subseteq\!G\), so we only need to show that \(R^{Gr}_p(v_G)\) is fulfilled for all \(p\!\in\!\Pi\).
As the case \(N\!=\!d\!=\!1\) is by Corollary \ref{cor:X_is_BSQS_if_d=N}, we assume \(N\!\ge\!2\).

We have that the relations coming from the partitions \(\{\paarpartwb,\paarpartbw,\baarpartwb,\baarpartbw\}\) are fulfilled. For \(d\!=\!2\) this is Theorem \ref{thm:hom_from_PQS_to_QSymG} and for \(d\!=\!1\) see Remark \ref{rem:four_block_implies_remaining_mcpp_relations} and Lemmata \ref{lem:mcpp_relations_are_fulfilled_pairing_case} and \ref{lem:mcpp_relations_are_fulfilled_pairings_and_singleton-partition_case}. The same holds for {\paarpartww}, where required.
Most of the remaining parts of the proof are by Theorem \ref{thm:hom_from_PQS_to_QSymG} and Lemma \ref{lem:relations_of_four-block} but in virtue of Remark \ref{rem:proof-plan_for_d<N} we often have to perform some algebraic operations to see that the desired relations are really fulfilled for all relevant multi indices \(\gamma\) and \(\gamma'\). We prove two cases. The other ones are handled with similar arguments.\newline


\emph{Case \(\mathcal{S}_{\textnormal{loc}}(k,l)\):} By the arguments above the relations due to \(mcpp\) and \(\vierpartwbwb\) are fulfilled.
As \(d\!=\!2\) also the relations \(R^{Gr}_{\scalebox{0.7}{\(\singletonw\!\otimes\!\!\singletonb\)}}(v_G)\) are guaranteed by Theorem \ref{thm:hom_from_PQS_to_QSymG}. In the case \(k\!=\!l\!=\!0\) this is everything to be proved.
From the fact that \(v_G\) and \(v_G^T\) are unitaries and Lemma \ref{lem:relations_on_generators_imply_relations_for_category} we deduce that also \(R^{Gr}_{\scalebox{0.7}{\(\singletonb\!\otimes\!\!\singletonw\)}}(v_G)\) are fulfilled. But this now guarantees that each row and column of \(v_G\)  sums up to the same (unitary) element.
Using this result we can consider now the relations \(R^{Gr}_{\scalebox{0.7}{\(\singletonw^{\otimes k}\)}}(v_G)\) reading as
\[\sum_{t_1,\ldots,t_k}v_{\gamma'_1t_1}\cdots v_{\gamma'_kt_k}=\mathds{1}\]
which are now proved to be true not only for \(\gamma'\) with at most two different entries (see Theorem \ref{thm:hom_from_PQS_to_QSymG}) but for all \(\gamma\!\in\![N]^k\).
The same argument secures all the quantum group relations associated to \(p\!=\!\positionerl\), which read as
\begin{equation*}\label{eqn:relation_for_v_ij_positionerl}
\sum_{t'_1,\ldots,t'_{2l+1}}\big(v_{\gamma_1t'_1}\cdots v_{\gamma'_lt'_l}\big)v_{\gamma'_{l+1}t'_{l+1}}\big(v_{\gamma'_{l+2}t'_{l+2}}^*\cdots v_{\gamma'_{2l+1}t'_{2l+1}}^*\big)v_{\gamma'_{2l+2}t'_{l+1}}^*=\delta_{\gamma'_{l+1},\gamma'_{2l+2}}.
\end{equation*}
At first site these are true only if \(\gamma'=(\gamma'_1,\ldots,\gamma'_{2l+2})\) has at most two different entries,
but by the arguments from above we can replace all entries \(\gamma'_1,\ldots,\gamma_l,\gamma'_{l+2},\ldots,\gamma'_{2l+1}\) by \(\gamma'_{2l+2}\), proving the claim.\newline
\emph{Case \(\mathcal{O}_{\textnormal{glob}}(k)\):} For \(k\!=\!2\) we only need to prove \(R_p^{Gr}(v_G)\) for \(p\!\in\!\Pi':=\{\paarpartww,mcpp\}\) which is Lemma \ref{lem:mcpp_relations_are_fulfilled_pairing_case}.
For \(k\!\in\!2\N\backslash\{2\}\) this Lemma only guarantees the relations due to \(p\!\in\!\{mcpp\}\).
We start with the partition \(p\!=\!\paarpartww\otimes\paarpartbb\). The corresponding quantum group relations read as
\[\left(\sum_{t'_1}v_{\gamma'_1t'_1}v_{\gamma'_2t'_1}\right)
\left(\sum_{t'_2}v_{\gamma'_3t'_2}^*v_{\gamma'_4t'_2}^*\right)
=\delta_{\gamma'_1\gamma'_2}\delta_{\gamma'_3\gamma'_4}\]
and Theorem \ref{thm:hom_from_PQS_to_QSymG} only guarantees this result for \(\gamma'\) with at most two different entries. But choosing \(\gamma'_1=\gamma'_4\!\neq\!\gamma'_2\!=\!\gamma'_3\) shows
\[\left(\sum_{t'_1}v_{\gamma'_1t_1}v_{\gamma'_2t_1}\right)\left(\sum_{t'_2}v_{\gamma'_2t'_2}^*v_{\gamma'_1t'_2}^*\right)
=\left(\sum_{t'_1}v_{\gamma'_1t_1}v_{\gamma'_2t_1}\right)\left(\sum_{t'_1}v_{\gamma'_1t_1}v_{\gamma'_2t_1}\right)^*=0,\]
so 
\[\sum_{t'_1}v_{\gamma'_1t_1}v_{\gamma'_2t_1}=0\quad\quad\forall \gamma'_1\!\neq\!\gamma'_2.\]
This proves \(R^{Gr}_{\scalebox{0.7}{\(\paarpartww\otimes\paarpartbb\)}}(v_G)\).
Together with the fact that \(v_G\) and \(v_G^T\) are unitaries we also have that the quantum group relations for \(\paarpartbb\otimes\paarpartww\) are fulfilled, so the sums \(\sum_{t_1}v_{\gamma'_1t'_1}v_{\gamma'_1t'_1}\) are unitaries, so invertible.
Therefore, \(R^{Gr}_{\scalebox{0.7}{\(\paarpartww\otimes\paarpartbb\)}}(v_G)\) in particular says that \(\sum_{t'_1}v_{\gamma'_1t'_1}v_{\gamma'_1t'_1}\) is independent of \(\gamma'_1\!\in\![N]\).
We finally use all these results in the situation of \(p\!=\!\paarpartww^{\otimes k}\) to show that the corresponding relations,
\[\left(\sum_{t'_1}v_{\gamma'_1t'_1}v_{\gamma'_2t'_1}\right)\cdots
\left(\sum_{t'_k}v_{\gamma'_{2k-1}t'_k}v_{\gamma'_{2k}t'_{k}}\right)
=\delta_{\gamma'_1\gamma'_2}\cdots\delta_{\gamma'_{2k-1}\gamma'_{2k}},\]
are true for all \(\gamma'\!\in\![N]^{2k}\), as we can now make the replacement
\[(\gamma'_{2m+1},\gamma'_{2m+2})\mapsto\begin{cases}(1,1)&,\gamma'_{2m+1}=\gamma'_{2m+2}\\(1,2)&,\gamma'_{2m+1}\neq\gamma'_{2m+1}\end{cases}.\]
\end{proof}

\begin{rem}\label{rem:group_case_main_result}
Adding the crossing partition \scalebox{0.7}{\crosspartwwww} to the sets \(\Pi\) in Table \ref{table:classification_free_case} produces all categories for all unitary easy groups, see \cite{tarragoweberclassificationunitaryQGs}.
It obviously guarantees commutativity of the \(x_{ij}\)'s and for \(d\!=\!2 \) we have \(R^{Gr}_{\scalebox{0.5}{\crosspartwwww}}(v_G)\) fulfilled by Theorem \ref{thm:hom_from_PQS_to_QSymG}.
So the (quantum) symmetry groups of these partition (quantum) spaces are given by the corresponding easy groups.
Note that for \(d\!=\!2\) we can directly deduce from \(\{\paarpartwb,\paarpartbw,\baarpartwb,\baarpartbw\}\!\subseteq\!\Pi\) that \(v_G\) and \(\bar{v}_G\) are unitaries, so we do not need to use Lemmata \ref{lem:mcpp_relations_are_fulfilled_pairing_case} and \ref{lem:mcpp_relations_are_fulfilled_pairings_and_singleton-partition_case}. We finally remark that it is unclear, if \(d\!=\!1\) works in the blockstable cases.
\end{rem}

\section{Open questions and further remarks}\label{sec:open_questions}
\begin{quest}\label{quest:when_does_QSymG_only_depend_on_category?}
Are there situations or conditions (apart from \(d\!=\!N\)) such that the quantum symmetry group (or even the PQS) only depends on \(\langle\Pi\rangle\) and not \(\Pi\) itself?
\end{quest}
Regarding Corollary \ref{cor:QSymG_only_depends_on_category_if_d=N}, there is a simple counterexample for the analogous statement with \(d\!=\!1\): The free hyperoctahedral group \(H_N^+\) corresponds to the case \( \mathcal{H}_{\textnormal{loc}}(2,2)\), i.e. \(\Pi=\{\paarpartww,\vierpartwwbb,mcpp\}\), see Table \ref{table:classification_free_case}. The category of partitions \(\langle\Pi\rangle\) is also generated by \(\Pi':=\{\paarpartww,\scalebox{0.8}{\vierpartrotwbwb},mcpp\}\) but obviously \(R^{Sp}_{\scalebox{0.7}{\vierpartrotwbwb}}(x)\) is just the trivial relation. We thus have \(X_{N,1}(\Pi)\!\neq\!X_{N,1}(\Pi')\) and the quantum symmetry group of \(X_{N,1}(\Pi')\) is \(O_N^+\) (i.e. case \(\mathcal{O}_{\textnormal{glob}}(2)\)), whereas the one of \(X_{N,1}(\Pi)\) is \(H_N^+\).

\begin{quest}
Can we produce results similar to Theorem \ref{thm:free_case_main_result} (free case) or Remark \ref{rem:group_case_main_result} (group case) for other classes of partitions/easy quantum groups?
\end{quest}

\begin{quest}
 Is there  a way to read off from \(\Pi\) the minimal \(d\) such that \(G_N(\Pi)\) is the quantum symmetry group of \(X_{N,d}(\Pi)\)? In the situation of Theorem \ref{thm:free_case_main_result}, is \(d\!=\!1\) equivalent to \(\langle\Pi\rangle\) being blockstable?
\end{quest}
Due to Theorem \ref{thm:free_case_main_result}, we have \(d\!\!=\!\!1\) in the free blockstable cases, at least for the choices of \(\Pi\) presented in Table \ref{table:classification_free_case}.
But the counterexample after Question \ref{quest:when_does_QSymG_only_depend_on_category?} already shows that there are other choices for \(\Pi\), even in the non-crossing situation, where this is not true.
Another example from the commutative case is the partition set \(\Pi=\{\scalebox{0.7}{\crosspartwwww},\paarpartww,\vierpartwwbb,mcpp\}\) corresponding to the hyperoctahedral group \(H_N\).
For \(d\!\!=\!\!1\) we have \(X_{N,1}(\Pi)\!=\!X_{N,1}\big(\Pi\backslash\{\scalebox{0.6}{\crosspart}\}\big)\) as commutativity already follows from \(R^{Sp}_{\scalebox{0.7}{\(\vierpartwwbb\)}}(v_G)\)\vspace{5pt}, see Lemma \ref{lem:relations_of_four-block}, so the corresponding quantum symmetry group is \(H_N^+\) by Theorem \ref{thm:free_case_main_result}.

In the cases presented in Table \ref{table:classification_free_case} we have some sets \(\Pi\) where the quantum symmetry group of \(X_{N,1}(\Pi)\) is not given by \(G_N(\Pi)\), supporting our conjecture, that the case \(d\!=\!1\) is linked to blockstability.
Consider for example \(\mathcal{H}_{\textnormal{loc}}(k,l)\) with \(k\!\neq\!l\) and let \(\Pi(k,l)\) be the corresponding  set of partitions from Table \ref{table:classification_free_case}.
We have \(X_{N,1}(\Pi(k,l))=X_{N,1}(\Pi(k,k))\) as the quantum space relations \(R^{Sp}_{b_l\otimes\bar{b}_l}(x)\) are redundant.
Therefore the quantum symmetry group of \(X_{N,1}\big(\Pi(k,l)\big)\) is \(G_N\big(\Pi(k,k)\big)\) which is in general bigger than \(G_N\big(\Pi(k,l)\big)\).
Similar results hold in the cases \(\mathcal{S}_{\textnormal{loc}}(0,0)\), \(\mathcal{H}_{\textnormal{glob}}(k)\) for \(k\!\in\!2\N+4\) and \(\mathcal{S}_{\textnormal{glob}}(0)\)
, where respectively \(R^{Sp}_{\scalebox{0.7}{\(\singletonw\!\otimes\!\!\singletonb\)}}(x)\), \(R^{Sp}_{\scalebox{0.7}{\(\paarpartww\!\otimes\!\!\paarpartbb\)}}(x)\)\vspace{4pt} and again \(R^{Sp}_{\scalebox{0.7}{\(\singletonw\!\otimes\!\!\singletonb\)}}(x)\) are redundant.

On the other hand, though, we cannot guarantee that \(d\!=\!1\) fails in all non-blockstable cases.
Our standard method to deduce relations for the \(v_{ij}\) was to start with a quantum space relation \(R_p^{Sp}(x)\), apply \(\alpha\) or \(\beta\) to it and finally \(\mathds{1}\!\otimes\!\textnormal{ev}_{\sigma}\).
But of course by this procedure we might have lost some information as \(\textnormal{ev}_G\) is far from being an isomorphism. In principle we would have to stay inside \(X_{N,d}(\Pi)\) or at least \(G_N(\Pi)\).
In \(G_N(\Pi)\) we could deduce many (in)dependencies by the fusion rules established in \cite{freslonweber} and \cite{freslon} as done in Lemma \ref{lem:dimension_of_subspaces_(O_2^+_and_B_3^+)} and the ones following thereafter.
Hence, although we expect that for non-blockstable categories we always need \(d\!\ge\!2\) in order to reconstruct \(G_N(\Pi)\) as the quantum symmetry group of \(X_{N,d}(\Pi)\), we have to leave this question open.
\bibliographystyle{plain}
\bibliography{pqs}

\begin{thebibliography}{10}

\bibitem{banicageometricaspects}
T.~{Banica}.
\newblock {Unitary easy quantum groups: geometric aspects}.
\newblock {\em arxiv:1706.03427}, 2017.

\bibitem{banicagoswami_noncommutative_spheres}
T.~Banica and D.~Goswami.
\newblock {Quantum isometries and noncommutative spheres}.
\newblock {\em Comm. Math. Phys.}, 298(2):343--356, Sep 2010.

\bibitem{banicaskalskisoltan_homogeneous_spaces}
T.~Banica, A.~Skalski, and P.~So{\l}tan.
\newblock {Noncommutative homogeneous spaces: the matrix case}.
\newblock {\em J. Geom. Phys.}, 62(6):1451--1466, jun 2012.

\bibitem{banicaspeicherliberation}
T.~Banica and R.~Speicher.
\newblock {Liberation of orthogonal Lie groups}.
\newblock {\em Adv. Math.}, 222(4):1461--1501, 2009.

\bibitem{bhowmickgoswamiquantumgroupriemannian}
J.~Bhowmick and D.~Goswami.
\newblock {Quantum group of orientation-preserving Riemannian isometries}.
\newblock {\em J. Funct. Anal.}, 257(8):2530--2572, 2009.

\bibitem{bhowmickgoswamiqisog_examples_and_computations}
J.~Bhowmick and D.~Goswami.
\newblock {Quantum isometry groups: examples and computations}.
\newblock {\em Comm. Math. Phys.}, 285(2):421--444, Jan 2009.

\bibitem{bhowmickgoswamiskalski}
J.~Bhowmick, D.~Goswami, and A.~Skalski.
\newblock {Quantum isometry groups of 0-dimensional manifolds}.
\newblock {\em Trans. Amer. Math. Soc.}, 363(2):901--921, 2011.

\bibitem{freslon}
A.~Freslon.
\newblock {Fusion (semi)rings arising from quantum groups}.
\newblock {\em J. Algebra}, 417(Supplement C):161--197, 2014.

\bibitem{freslonweber}
A.~Freslon and M.~Weber.
\newblock {On the representation theory of partition (easy) quantum groups}.
\newblock {\em J. Reine Angew. Math.}, 2016:155, 2014.

\bibitem{goswamiquantumgroupofisometries1}
D.~Goswami.
\newblock {Quantum group of isometries in classical and noncommutative
  geometry}.
\newblock {\em Comm. Math. Phys.}, 285(1):141, Mar 2008.

\bibitem{neshveyevtuset}
S.~Neshveyev and L.~Tuset.
\newblock {\em {Compact quantum groups and their representation categories}}.
\newblock {Collection SMF., courses specialis{\'e}s, 20.} Soci{\'e}t{\'e}
  Math{\'e}matique de France, Paris, 2013.

\bibitem{podlesquantumspheres}
P.~Podle{\'s}.
\newblock {Quantum spheres}.
\newblock {\em Lett. Math. Phys.}, 14(3):193--202, Oct 1987.

\bibitem{podles1995}
P.~Podle{\'s}.
\newblock {Symmetries of quantum spaces. Subgroups and quotient spaces of
  quantum SU(2) and SO(3) groups}.
\newblock {\em Comm. Math. Phys.}, 170(1):1--20, May 1995.

\bibitem{tarragoweberclassificationpartitions}
M.~Tarrago, P.and~Weber.
\newblock The classification of tensor categories of two-colored noncrossing
  partitions.
\newblock {\em J. Combin. Theory Ser. A}, 154:464--506, 2018.

\bibitem{tarragoweberclassificationunitaryQGs}
P.~Tarrago and M.~Weber.
\newblock {Unitary Easy Quantum Groups: The Free Case and the Group Case}.
\newblock {\em Int. Math. Res. Not.}, 2017(18):5710--5750, 2017.

\bibitem{timmermann}
T.~Timmermann.
\newblock {\em {An invitation to quantum groups and duality}}.
\newblock {EMS Textbk. Math.} European Mathematical Society, Z{\"u}rich, 2008.

\bibitem{wang1995}
Sh. Wang.
\newblock {Free products of compact quantum groups}.
\newblock {\em Comm Math. Phys.}, 167(3):671--692, Feb 1995.

\bibitem{wang1998}
Sh. Wang.
\newblock {Quantum symmetry groups of finite spaces}.
\newblock {\em Comm. Math. Phys.}, 195(1):195--211, 1998.

\bibitem{weber_LNM}
M.~Weber.
\newblock Easy quantum groups.
\newblock In {\em Free probability and operator algebras}, M{\"u}nster Lecture
  Notes in Mathematics. European Mathematical Society, Z{\"u}rich, 2016.

\bibitem{weber_PMS}
M.~Weber.
\newblock {Introduction to compact (matrix) quantum groups and Banica--Speicher
  (easy) quantum groups}.
\newblock {\em Proc. Indian Acad. Sci. Math. Sci.}, Nov 2017.

\bibitem{weber_partition_cstar-algebras_I}
M.~Weber.
\newblock {Partition C*-algebras}.
\newblock {\em arxiv:1710.06199}, 2017.

\bibitem{woronowiczpseudogroups}
S.L. Woronowicz.
\newblock {Compact matrix pseudogroups}.
\newblock {\em Comm. Math. Phys.}, 111(4):613--665, 1987.

\bibitem{woronowicztannakakrein}
S.L. Woronowicz.
\newblock {Tannaka-Krein duality for compact matrix pseudogroups. Twisted SU(N)
  groups}.
\newblock {\em Invent. Math.}, 93(1):35--76, 1988.

\end{thebibliography}

\section{Appendix}
\begin{notation*}[see Notation \ref{not:partitioning_of_labelings}]
Given \(p\!\in\!\mathcal{P}(k,l)\) and \(N\!\in\!\N\) we write\vspace{-2pt}
\[[N]^k=T_0\,\dot{\cup}\,T_1\,\dot{\cup}\,\ldots\,\dot{\cup}\,T_r\quad,\quad[N]^l=T'_0\,\dot{\cup}\,T'_1\,\dot{\cup}\,\ldots\,\dot{\cup}\,T'_r,\vspace{-6pt}\]
such that 
\begin{compactitem}
\item [(i)] \(r=N^{tb(p)}\), where \(tb(p)\) denotes the number of through-blocks of \(p\),
\item [(ii)]\(T_0\) and \(T'_0\) are the invalid labelings of the upper (respectively lower) row,
\item [(iii)]for every \(1\!\le\!i\!\le\!r\) every labeling \((t,t')\!\in\!T_i\!\times\!T'_i\) is valid,
\item[(iv)] for every \(1\!\le\!i\!\le\!r\) the sets \(T_i\) and \(T'_i\) are non-empty,
\item [(v)]if \((t,t')\!\in\![N]^k\!\times\![N]^l\) is a valid labeling, then \((t,t')\!\in\!T_i\!\times\!T'_i\)  for some \(1\!\le\!i\!\le\!r\),
\item [(vi)]for every \(1\!\le\!i\!\le\!r\) and \((t,t'),(s,s')\!\in\!T_i\!\times\!T'_i\) we have that \((t,t')\) labels the through-blocks of \(p\) the same way as \((s,s')\) does.
\end{compactitem} 
\end{notation*}
\newpage
Here is a summary of all relations associated to partitions on one page.
\begin{defn*}[see Definition \ref{defn:quantum_group_relations_using_T_i's}]
Let \(N\!\in\!\N\), \(u\!:=\!(u_{ij})\) an \(N\!\times\!N\)-matrix of generators and \(p\!\in\!\mathcal{P}(\omega,\omega')\!\subseteq\!\mathcal{P}(k,l)\) be a partition. The relations \(R^{Gr}_p(u)\) are:
\begin{itemize}
\item[(i)]
\(\displaystyle\sum_{t\in T_i} u_{t_1\gamma_1}^{\omega_1}\cdots u_{t_k\gamma_k}^{\omega_k}
=\sum_{t'\in T'_j} u_{\gamma'_1t'_1}^{\omega'_1}\cdots u_{\gamma'_lt'_l}^{\omega'_l}\quad,\quad 1\!\le \!i,j\!\le\!r\), \(\gamma\!\in\!T_{j}\textnormal{ and }\gamma'\!\in\!T'_{i}.\)

\item[(ii)]
\(\displaystyle\sum_{t\in T_i}u_{t_1\gamma_1}^{\omega_1}\cdots u_{t_k\gamma_k}^{\omega_k}=0\quad,\quad 1\!\le\!i\!\le\!r\textnormal{ and }\gamma\!\in\!T_0.\)

\item[(iii)]
\(\displaystyle\sum_{t'\in T'_j}u_{\gamma'_1t'_1}^{\omega'_1}\cdots u_{\gamma'_lt'_l}^{\omega'_l}=0\quad,\quad 1\!\le\!j\!\le\!r\textnormal{ and }\gamma'\!\in\!T'_0.\)
\end{itemize}
\end{defn*}

\begin{defn*}[see Definition \ref{defn:first_d-many_columns-relations}]
The relations \(R^ {Sp}_p(u_G)\) are:
\begin{itemize}
\item[(i)]\(\displaystyle\sum_{t\in T_i}u_{t_1\gamma_1}^{\omega_1}\dots u_{t_k\gamma_k}^{\omega_k}=\sum_{t'\in T'_i}u_{t'_1\gamma'_1}^{\omega'_1}\dots u_{t'_l\gamma'_l}^{\omega'_l}\quad,\quad1\!\le\!i,j\!\le\!r,\gamma\!\in\!T_j\textnormal { and }\gamma'\!\in\!T'_j\).
\item[(ii)]\(\displaystyle\sum_{t\in T_i}u_{t_1\gamma_1}^{\omega_1}\dots u_{t_k\gamma_k}^{\omega_k}=0\quad,\quad1\!\le\!i\!\le\!r\textnormal { and }\gamma\!\in\!T_0\).
\item[(iii)]\(\displaystyle\sum_{t'\in T'_i}u_{t'_1\gamma'_1}^{\omega'_1}\dots u_{t'_l\gamma'_l}^{\omega'_l}=0\quad,\quad1\!\le\!i\!\le\!r\textnormal { and }\gamma'\!\in\!T'_0\).
\end{itemize}
\end{defn*}

\begin{lem*}[see Lemma \ref{lem:quantum_group_relations_imply_quantum_space_relations} and Remark \ref{rem:quantum_space_relations_always_fulfilled_for_BSQGs}]
It holds
\begin{itemize}
\item [(1)]\(R_p^{Gr}(u_G),\;R_{pp^*}^{Gr}(u_G),\;R_{p^*}^{Gr}(u_G)\quad\Rightarrow\quad R_p^{Sp}(u_G)\),
\item [(2)]\(R_p^{Sp}(u_G),\;R_p^{Sp}(u_G^T)\quad\Rightarrow\quad R_p^{Gr}(u_G)\).
\item[(3)]If \(G\!=\!G_N(\Pi)\), then: \(R_p^{Gr}\big(u_{G_N(\Pi)}\big)\Leftrightarrow R_p^{Sp}\big(u_{G_N(\Pi)}\big),R_p^{Sp}\big(u_{G_N(\Pi)}^T\big)\).
\end{itemize}
\end{lem*}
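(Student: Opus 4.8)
The plan is to assemble the three items from machinery already developed rather than to argue from scratch; none of the three is essentially new, and each reduces to results proved above.

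For (1) I would mimic the proof of Lemma~\ref{lem:first_column-relations}, replacing the special labeling $(1,\dots,1)$ by an arbitrary $\gamma\in T_j$ (resp.\ $\gamma'\in T'_j$). Concretely: fix $1\le i,j\le r$, $\gamma\in T_j$, $\gamma'\in T'_j$ and an auxiliary $\beta'\in T'_i$, then chain the two available relations. First, item (i) of $R^{Gr}_p(u_G)$ (Definition~\ref{defn:quantum_group_relations_using_T_i's}) rewrites $\sum_{t\in T_i}u_{t_1\gamma_1}^{\omega_1}\cdots u_{t_k\gamma_k}^{\omega_k}$ as the lower-row sum $\sum_{t'\in T'_j}u_{\beta'_1 t'_1}^{\omega'_1}\cdots u_{\beta'_l t'_l}^{\omega'_l}$; then $R^{Gr}_{pp^*}(u_G)$, whose row decompositions coincide with the $T'_\bullet$-decomposition of the lower row of $p$, turns this into $\sum_{t'\in T'_i}u_{t'_1\gamma'_1}^{\omega'_1}\cdots u_{t'_l\gamma'_l}^{\omega'_l}$, which is item (i) of $R^{Sp}_p(u_G)$. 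Items (ii) and (iii) of $R^{Sp}_p(u_G)$ are literally items (ii) and (iii) of $R^{Gr}_p(u_G)$ together with $R^{Gr}_{p^*}(u_G)$. The degenerate cases $k=0$ or $l=0$, where the relevant side is $\mathds 1$ and $\epsilon$ plays the role of $(1,\dots,1)$, already work in Lemma~\ref{lem:first_column-relations}, so nothing new is needed there.

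For (2) I would observe that $R^{Sp}_p(u_G)$ and $R^{Sp}_p(u_G^T)$ already contain items (ii) and (iii) of $R^{Gr}_p(u_G)$, leaving only item (i). Starting from $\sum_{t\in T_i}u_{t_1\gamma_1}^{\omega_1}\cdots u_{t_k\gamma_k}^{\omega_k}=\sum_{m'\in T'_i}u_{m'_1 n'_1}^{\omega'_1}\cdots u_{m'_l n'_l}^{\omega'_l}$, which is valid for every $n'\in T'_j$ by $R^{Sp}_p(u_G)$, I would average the right-hand side over $n'\in T'_j$; Remark~\ref{rem:quantum_space-relations:sums_only_depend_on_T_j_case_u}, applied once for $u_G$ and once for $u_G^T$, lets one freely reindex the two families of column labels, and dividing by $|T'_j|$ and using $|T'_i|=|T'_j|$ (Remark~\ref{rem:case_k,l=0}) produces $\sum_{t'\in T'_j}u_{\gamma'_1 t'_1}^{\omega'_1}\cdots u_{\gamma'_l t'_l}^{\omega'_l}$ for an arbitrary $\gamma'\in T'_i$, i.e.\ exactly item (i) of $R^{Gr}_p(u_G)$.

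For (3) I would specialise to $G=G_N(\Pi)$: since $p,pp^*,p^*\in\langle\Pi\rangle$, Lemma~\ref{lem:relations_on_generators_imply_relations_for_category} gives $R^{Gr}_p$, $R^{Gr}_{pp^*}$, $R^{Gr}_{p^*}$ for $u_{G_N(\Pi)}$, and likewise for $u_{G_N(\Pi)}^T$ (either because $u\mapsto u^T$ permutes the defining relations, or because the relevant rotations again lie in the category); then (1) yields $R^{Sp}_p(u_{G_N(\Pi)})$ and $R^{Sp}_p(u_{G_N(\Pi)}^T)$, while (2) supplies the reverse implication, giving the claimed equivalence. I expect the only genuine obstacle to be the index bookkeeping in (1): correctly matching the $T_\bullet$/$T'_\bullet$-decompositions attached to the two rows of $pp^*$ with the lower-row decomposition of $p$, and keeping track of the through-block labelings throughout; everything else is routine reindexing.
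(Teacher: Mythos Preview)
Your proposal is correct and follows the paper's own proof essentially line for line: part (1) is exactly the chain $R^{Gr}_p \to R^{Gr}_{pp^*}$ with the auxiliary $\beta'\in T'_i$ (items (ii)--(iii) coming directly from $R^{Gr}_p$ and $R^{Gr}_{p^*}$), part (2) is the averaging argument via Remark~\ref{rem:quantum_space-relations:sums_only_depend_on_T_j_case_u} and $|T'_i|=|T'_j|$, and part (3) is the specialisation through Lemma~\ref{lem:relations_on_generators_imply_relations_for_category}. Your flagged obstacle---matching the $T'_\bullet$-decomposition of the lower row of $p$ with both rows of $pp^*$---is precisely the one observation the paper makes explicit in the proof of Lemma~\ref{lem:first_column-relations}.
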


\begin{defn*}[see Definition \ref{defn:quantum_space_relations_for_the_x_ij}]
Let \(d,N\!\in\!\N\) with \(d\!\le\!N\) and \((x_{ij})_{1\le i\le N,1\le j\le d}\) a tupel of vectors of generators \(x_{ij}\).
The relations \(R^{Sp}_p(x)\) are:
\begin{itemize}
\item[(i)]\(\displaystyle\sum_{t\in T_i}x_{t_1\gamma_1}^{\omega_1}\dots x_{t_k\gamma_k}^{\omega_k}=\sum_{t'\in T'_i}x_{t'_1\gamma'_1}^{\omega'_1}\dots x_{t'_l\gamma'_l}^{\omega'_l}\quad,\quad 1\!\le\!i,j\!\le\!r,\; \gamma\!\in\!T_j\!\cap\![d]^k,\;\gamma'\!\in\!T'_j\!\cap\![d]^l\).\vspace{4pt}
\item[(ii)]\(\displaystyle\sum_{t\in T_i}x_{t_1\gamma_1}^{\omega_1}\dots x_{t_k\gamma_k}^{\omega_k}=0\quad,\quad 1\!\le\!i\!\le\!r,\;\gamma\!\in\!T_0\!\cap\![d]^k\).\vspace{4pt}
\item[(iii)]\(\displaystyle\sum_{t'\in T'_i}x_{t'_1\gamma'_1}^{\omega'_1}\dots x_{t'_l\gamma'_l}^{\omega'_l}=0\quad,\quad 1\!\le\!i\!\le\!r,\;\gamma'\!\in\!T'_0\!\cap\![d]^l\).\vspace{8pt}
\end{itemize}
\end{defn*}

\end{document}